\numberwithin{equation}{section}
\theoremstyle{plain}
    \newtheorem{theorem}[equation]{Theorem}
    \newtheorem{lemma}[equation]{Lemma}
    \newtheorem{corollary}[equation]{Corollary}
    \newtheorem{proposition}[equation]{Proposition}
    \newtheorem*{theorem*}{Theorem}
    \newtheorem*{proposition*}{Proposition}
    \newtheorem*{corollary*}{Corollary}
    \newtheorem*{lemma*}{Lemma}
    \newtheorem*{conjecture*}{Conjecture}
    \newtheorem{definition-theorem}[equation]{Definition/Theorem}
    \newtheorem{definition-lemma}[equation]{Definition/Lemma}
\theoremstyle{definition}
    \newtheorem{definition}[equation]{Definition}
    \newtheorem{example}[equation]{Example}
    \newtheorem{examples}[equation]{Examples}
    \newtheorem{remark}[equation]{Remark}
    \newtheorem{remarks}[equation]{Remarks}
    \newcommand{\R}{\mathbb{R}}
    \newcommand{\C}{\mathbb{C}}
   	\renewcommand{\phi}{\varphi}
	\let\epsilon\varepsilon
    \newcommand{\Bounded}{\operatorname{B}}
    \newcommand{\CB}{\operatorname{CB}}
    \newcommand{\Compact}{\operatorname{K}}
    \newcommand{\cb}{\mathrm{cb}}
    \newcommand{\category}{\mathsf}
   \newcommand{\functor}{\mathcal}
\newcommand{\llangle}{\langle\!\langle}
\newcommand{\rrangle}{\rangle\!\rangle}
\newcommand{\into}{\hookrightarrow}
\newcommand{\restrict}{\big{\vert}}
\newcommand{\argument}{\hspace{2pt}\underbar{\phantom{g}}\hspace{2pt}}
\newcommand{\id}{\mathrm{id}}
\newcommand{\act}{\operatorname{act}}
\newcommand{\dual}{\vee}
\newcommand{\h}{\mathrm{h}}
\newcommand{\sigmah}{\sigma\mathrm{h}}
\newcommand{\eh}{\mathrm{eh}}
\renewcommand{\prod}{\bigsqcap}
\newcommand{\csmax}{}
    \DeclareMathOperator{\Res}{Res}
    \DeclareMathOperator{\Ind}{Ind}
	\DeclareMathOperator{\image}{image}
    \DeclareMathOperator{\GL}{GL}
	\DeclareMathOperator{\opsp}{OS}
	\DeclareMathOperator{\opmod}{OM}
	\DeclareMathOperator{\opcomod}{OC}
	\DeclareMathOperator{\cmod}{CM}
	\DeclareMathOperator{\ccomod}{CC}
	\DeclareMathOperator{\conn}{Con^f}
	\newcommand{\starRep}{\operatorname{Rep}}
	\newcommand{\adjoint}{\star}
\begin{document}

\title{Descent of Hilbert ${C^*}$-modules}
\date{November 1, 2017. Revised March 21, 2019.}
\author{Tyrone Crisp }
\thanks{Supported by a fellowship from the Radboud Excellence Initiative at Radboud University Nijmegen.}
\address{Institute for Mathematics, Astrophysics and Particle Physics, Radboud University Nijmegen, The Netherlands}
\curraddr{Department of Mathematics and Statistics, University of Maine, Neville Hall 333, Orono, ME 04469-5752, USA}
\email{tyrone.crisp@maine.edu}

\subjclass[2010]{46L08 (46M15, 16T15)}

\keywords{Hilbert modules, Operator modules, Descent}

\begin{abstract}
Let $F$ be a right Hilbert $C^*$-module over a $C^*$-algebra $B$, and suppose that $F$ is equipped with a left action, by compact operators, of a second $C^*$-algebra $A$. Tensor product with $F$  gives a functor from Hilbert $C^*$-modules over $A$ to Hilbert $C^*$-modules over $B$. We prove that under certain conditions (which are always satisfied if, for instance, $A$ is nuclear), the image of this functor can be described in terms of coactions of a certain coalgebra canonically associated to $F$. We then discuss several examples that fit into this framework: parabolic induction of tempered group representations; Hermitian connections on Hilbert $C^*$-modules; Fourier (co)algebras of compact groups; and the maximal $C^*$-dilation of operator modules over non-self-adjoint operator algebras.
\end{abstract}

\maketitle

\section{Introduction}

The notion of \emph{descent} originated in algebraic  geometry  as a technique for relating the geometry of a space $M$ to that of a covering space $N\to M$ (\cite{Grothendieck}; cf.~\cite[Chapter 6]{Neron}  for an exposition). The idea is to characterise those structures---vector bundles, or sheaves, for example---defined on $N$ that are pulled back from, and thus `descend' back down to, the base $M$. Translating geometry into algebra, one is led to consider questions such as that of characterising those $B$-modules that have the form $X\otimes_A B$ for some $A$-module $X$, given an inclusion of rings $A\into B$. In this paper we shall consider an analogue of this question for modules over algebras of Hilbert-space operators, and show that for a large class of algebras (including for instance all nuclear $C^*$-algebras), the descent problem has a particularly satisfactory solution: in the language of \cite{Grothendieck}, every injective $*$-homomorphism satisfies strict descent for Hilbert $C^*$-modules. 

The problem that we shall study is closely related to---indeed, as we shall presently explain, it is dual to---the one addressed by Rieffel in his $C^*$-algebraic generalisation of Mackey's imprimitivity theorem \cite[Theorem 6.29]{Rieffel_induced}. Let us recall that Rieffel's theorem describes the image of the functor
\begin{equation*}\label{eq:intro-Rieffel} 
\functor{F}_1:\starRep(B) \to \starRep(A) \qquad Y\mapsto F\otimes_B Y,
\end{equation*}
from the $*$-representations of a $C^*$-algebra $B$ to those of a second $C^*$-algebra $A$, given by tensor product with a $C^*$-correspondence  (or in the terminology of \cite{Rieffel_induced}, a Hermitian $B$-rigged $A$-module)  ${}_A F_B$. Rieffel observed that  the $A$-representations $F\otimes_B Y$ all carry a compatible representation of the $C^*$-algebra $K=\Compact_B(F)$ of $B$-compact operators on $F$, and he proved that the existence of such a representation of $K$ precisely characterises the image of $\functor{F}_1$. 

Alongside $\functor{F}_1$, each $C^*$-correspondence ${}_A F_B$ naturally determines a second functor 
\begin{equation*}\label{eq:intro-Kasparov} 
\functor{F}_2:\cmod(A) \to \cmod(B) \qquad X\mapsto X\otimes_A F,
\end{equation*}
from the category of right Hilbert $C^*$-modules over $A$ to those over $B$. Functors of this sort play a central role in operator $K$-theory (see \cite{Kasparov_icm} for a survey). An important instance of this is when the algebra $A$ acts on the correspondence $F$ by $B$-compact operators, in which case the functor $\functor{F}_2$ induces a map in $K$-theory $K_*(A)\to K_*(B)$. For example, if $N\to M$ is a surjection of compact Hausdorff spaces, then the corresponding inclusion of algebras of continuous functions $C(M)\into C(N)$ yields a $C^*$-correspondence ${}_{C(M)} C(N)_{C(N)}$ for which the functor $\functor{F}_2$ (and the induced map $K^*(M)\to K^*(N)$) is given by pull-back of continuous fields of Hilbert spaces from $M$ to $N$. The problem of describing the image of functors of the form $\functor{F}_2$ is thus an interesting one from the point of view of $K$-theory. 

This problem is also relevant to representation theory,   specifically to the tempered representation theory of reductive Lie groups. Given such a group $G$, and a  Levi subgroup $L\subseteq G$, Clare  showed in \cite{Clare} how to construct a $C^*$-correspondence ${}_{C^*_r(G)} F_{C^*_r(L)}$ whose associated tensor-product functor $\functor{F}_1$ is the well-known functor  of \emph{parabolic induction} of tempered unitary representations from $L$ to $G$. In \cite{CCH-Compositio} and \cite{CCH-JIMJ} we showed that the companion  functor $\functor{F}_2$ of \emph{parabolic restriction} also plays an important role  in representation theory. Detailed information about the image of this functor can be obtained, via the methods of \cite{CCH-Compositio} and \cite{CCH-JIMJ}, from deep representation-theoretic results due to Harish-Chandra, Langlands, and others. It would be of great interest to find an alternative, more geometric description of the image of parabolic restriction, since doing so might yield a new perspective on aspects of tempered representation theory. 

Motivated by these $K$-theoretic and representation-theoretic considerations, in this paper we shall prove (Theorem \ref{thm:Hermitian}) that the image of the functor $\functor{F}_2$ can, under certain conditions, be characterised in terms of \emph{co}actions  of an operator \emph{co}algebra associated to the correspondence $F$. Our approach relies on relating the functors $\functor{F}_1$ and $\functor{F}_2$ to one another in categorical terms, which was achieved in \cite{CH-Kadison} by embedding both categories $\starRep$ and $\cmod$ into the category $\opmod$ of \emph{operator modules} (cf.~\cite{BLM}). The functors $\functor{F}_1$ and $\functor{F}_2$ extend, using the \emph{Haagerup tensor product}, to functors between $\opmod(A)$ and $\opmod(B)$, and in \cite{CH-Kadison} we noted that if $A$ acts on $F$ by $B$-compact operators then these extended functors are  {adjoint} to one another. 

This adjunction allows us to bring a standard piece of categorical algebra---namely, the notion of  {(co)monads} and their categories of (co)modules---to bear on the problem of characterising the images of $\functor{F}_1$ and $\functor{F}_2$.  (See e.g., \cite[Chapter VI]{MacLane}, \cite[Chapter 3]{ttt}, or \cite[Chapter 4]{Borceux}, for the general theory.) Our description of the image of $\functor{F}_2$ is obtained through a study of the comonad  associated to  $\functor{F}_1$ and $\functor{F}_2$. Letting $F^\adjoint$ denote the operator $B$-$A$ bimodule adjoint to $F$, we find that the Haagerup tensor product $C=F^\adjoint \otimes^{\h}_A F$  carries a coalgebra structure; that the Hilbert $C^*$-$B$-modules $Y=X\otimes_A F$ all carry a compatible coaction $\delta_Y:Y\to Y\otimes^{\h}_B C$; and that under certain circumstances the existence of such a coaction  precisely characterises the image of $\functor{F}_2$. (Dually, Rieffel's imprimitivity theorem can be recovered through a study of the monad associated to $\functor{F}_1$ and $\functor{F}_2$.)

The appearance of  {co}algebras and  {co}modules in connection with $\functor{F}_2$, instead of the algebras and modules entering into Rieffel's imprimitivity theorem for $\functor{F}_1$, is easily explained  from a category-theoretic perspective: the functor $\functor{F}_2$   is \emph{left}-adjoint to  $\functor{F}_1$ (upon extension to operator modules), and left adjoints always produce comodules over comonads rather than modules over monads. The appearance of comodules can also be explained geometrically, by way of a simple example. Consider as above a continuous surjection $N\to M$ of compact Hausdorff spaces, and let $U$ be an open subset of $N$. Under what circumstances is $U$ the preimage of an open subset $V\subseteq M$, so that $C_0(U)\cong C_0(V)\otimes_{C(M)} C(N)$ as Hilbert $C^*$-modules over $C(N)$? An  obvious criterion is that the map $U\times_M N\to N$, ${(u,n)\mapsto n}$ should have image contained in $U$, and so give rise  (by pullback) to a map of $C(N)$-modules $C_0(U)\to C_0(U\times_M N)$. The latter map factors through a map $C_0(U)\to C_0(U)\otimes^{\h}_{C(M)} C(N)$,   which is a coaction of the kind appearing in our Theorem \ref{thm:Hermitian}.

Coalgebras and comodules related to the Haagerup tensor product have previously been studied from (at least) two other, rather different directions. In \cite{Effros-Ruan-Hopf}, Effros and Ruan showed that the multiplication on a von Neumann algebra $M$ dualises to a coproduct $M_\dual \to M_\dual\otimes^{\eh}_{\C} M_\dual$ on the predual $M_\dual$, where $\otimes^{\eh}$ is the \emph{extended} Haagerup tensor product. An important motivating example is that of $M=\operatorname{vN}(G)$, the von Neumann algebra of a locally compact group $G$, whose predual is the Fourier algebra $A(G)$ \cite{Eymard}. In certain  circumstances---e.g., if $M=\operatorname{vN}(G)$ for a  compact  group $G$ (or more generally, a compact quantum group: cf.~\cite[Chapter 9]{Daws})---Effros and Ruan's coproduct takes values in the ordinary Haagerup tensor product, and can thus be compared to the coalgebras studied in this paper. In Section \ref{subsec:Fourier} we show that in such  cases Effros and Ruan's coalgebras are isomorphic to coalgebras associated as above to $C^*$-correspondences (actually, to $*$-representations: $B=\C$). We conclude for instance that the tensor product with the regular representation  provides an equivalence between the category $\opmod(C^*(G))$ of operator modules over the $C^*$-algebra of a compact group $G$, and the category $\opcomod(A(G))$ of operator comodules over the Fourier (co)algebra of $G$. 

Comodules involving the Haagerup tensor product have also previously appeared in  operator $K$-theory, under the guise of \emph{connections} on operator modules. Following analogous constructions in algebra (cf.~\cite{Connes-NCG,Cuntz-Quillen-extensions}),  Mesland  in  \cite{Mesland} associated to each operator algebra $B$  the $B$-bimodule 
\[
\Omega(B)\coloneqq \ker\left (B\otimes^{\h}_{\C} B \xrightarrow{\textrm{multiplication}} B\right), 
\]
and he studied {connections} $\nabla: Z\to Z\otimes^{\h}_B \Omega(B)$ on operator $B$-modules. More generally, one can replace $\C$ by any closed nondegenerate subalgebra $A\subseteq B$ to obtain a bimodule $\Omega(B,A)$, and a corresponding notion of connection. In the parallel purely algebraic setting, given a connection $\nabla:Z\to Z\otimes_B \Omega(B,A)$ on a module over a ring $B$, one finds (cf. \cite{Nuss,Brz-grouplike}) that the map
\[
\delta: Z\xrightarrow{z\mapsto \nabla(z) + z\otimes 1\otimes 1} Z\otimes_B B\otimes_A B 
\]
defines a coaction of the (Sweedler) coring $C=B\otimes_A B$ if and only if the connection $\nabla$ is \emph{flat}; and  one obtains in this way an equivalence of categories between $C$-comodules and flat $\Omega(B,A)$-connections on $B$-modules. With a view to   applications in   $K$-theory, in Section \ref{subsec:connections} we extend these observations to the $C^*$-algebraic setting, and we deduce from Theorem \ref{thm:Hermitian} that for a nondegenerate inclusion of $C^*$-algebras $A\into B$, where $A$ is (for instance) nuclear, the image of the functor $\cmod(A)\to \cmod(B)$, $X\mapsto X\otimes_A B$---and hence, the image of the induced map in $K$-theory $K_*(A)\to K_*(B)$---can be described in terms of \emph{Hermitian connections} $Z\mapsto Z\otimes^{\h}_B \Omega(B,A)$ on Hilbert $C^*$-$B$-modules.

The organisation of the paper is as follows. In Section \ref{sec:preliminaries} we recall some background and establish our notation regarding operator algebras and their modules. In Section \ref{sec:coalgebras} we introduce the coalgebras and comodules that appear in our main results, which are proven in Sections \ref{sec:descent-om} (on operator modules) and \ref{sec:descent-Cmod} (on Hilbert $C^*$-modules). In Section \ref{sec:examples} we discuss several examples: parabolic induction of representations of reductive groups, following \cite{CCH-Compositio,CCH-JIMJ,CH-Kadison} (Section \ref{subsec:parabolic}); the coalgebras of Effros and Ruan (Section \ref{subsec:Fourier}); and Hermitian connections on Hilbert $C^*$-modules (Section \ref{subsec:connections}). Finally, in Section \ref{subsec:Cstarmax}, we contrast these $C^*$-algebraic examples  with a non-self-adjoint example, namely the inclusion  of a non-self-adjoint operator algebra into its maximal $C^*$-algebra (cf.~\cite{Blecher-cstarmax}).

Let us conclude this introduction with a few words on our proofs. We indicated above that the the formulation of our main result is obtained by `reversing the arrows' in (an appropriate  formulation of) Rieffel's imprimitivity theorem. The same is not true of the proofs, however, due to an inherent asymmetry of the Haagerup tensor product. A theorem of Beck  (cf.~\cite[Theorem 3.3.14]{ttt}) shows that a necessary condition for Rieffel's theorem to hold is that  the functor $\functor{F}_1$ (extended to operator modules) should preserve certain cokernels; while a necessary condition for our results to hold is that (the extension of) the functor $\functor{F}_2$ should preserve certain kernels. The Haagerup tensor product over a $C^*$-algebra in fact preserves all cokernels, but it does not preserve all kernels (see Section \ref{subsec:exactness}). Our verification that the functor $\functor{F}_2$ preserves the appropriate class of kernels (Lemma \ref{lem:WEP-heart}) adapts techniques from \cite{Mesablishvili-pure, JT, Borceux-Pelletier, Mesablishvili-Banach}---all based on some form of duality---to the context of operator modules. (Some extra care is required, owing to the fact that the operator-space dual of an operator module is not, in general, an operator module.) We also rely heavily on  a theorem of Anantharaman-Delaroche and Pop on the exactness of the Haagerup tensor product (\cite{An-Pop}, cf.~Theorem \ref{thm:exact}).

 \section{Preliminaries}\label{sec:preliminaries}

\subsection{Operator algebras and their modules}
 
We mostly adhere to the terminology and notation of \cite{BLM} with regard to operator algebras  and their modules. Let us briefly recall the definitions. 

An \emph{operator space} is a complex vector space $X$ equipped for each $n\geq 1$ with a Banach space norm on the matrix space $M_n(X)$, for which there exist a Hilbert space $H$ and an embedding $X\to \Bounded(H)$ such that the induced embeddings $M_n(X)\to M_n(\Bounded(H))\cong \Bounded(H^{\oplus n})$ are isometric for every $n$, where $\Bounded(H^{\oplus n})$ is given the operator norm.  

A  linear map of operator spaces $f:X\to Y$ is \emph{completely bounded}   if the quantity 
\[
\|f\|_{\cb}\coloneqq \sup_{n} \|M_n(f):M_n(X)\to M_n(Y)\| 
\]
is finite. The notions of \emph{completely contractive} and \emph{completely isometric} maps are defined in a similar way. A \emph{completely bounded isomorphism} is a completely bounded linear bijection whose inverse is also completely bounded. A \emph{complete embedding} is a map which has closed range and which is a completely bounded isomorphism onto its image. 
 
An \emph{operator algebra} is an operator space $A$ equipped with an associative bilinear product satisfying $\|aa'\|\leq \|a\|\cdot \|a'\|$ for every $n\geq 1$ and every $a,a'\in M_n(A)$.  We shall always assume that our operator algebras are  \emph{approximately unital}, meaning that they possess a contractive approximate unit. This latter assumption excludes many interesting examples of non-self-adjoint operator algebras; on the other hand, every $C^*$-algebra is an approximately unital operator algebra.

A (right) \emph{operator module} over an operator algebra $A$ is an operator space $X$ which is also a right $A$-module and whose norms satisfy $\|xa\|\leq \|x\|\cdot \|a\|$ for every $n\geq 1$ and every $a\in M_n(A)$ and $x\in M_n(X)$. In this case $A$ and $X$ can always be represented completely isometrically as spaces of Hilbert space operators, in such a way that the product in $A$ and the module action on $X$ become composition of operators. Important examples include Hilbert $C^*$-modules, which can be represented as Hilbert space operators via the `linking algebra' construction; and Hilbert-space representations $A\to \Bounded(H)$, for which the Hilbert space $H$ can be represented as rank-one operators on the conjugate Hilbert space $H^\adjoint$. Left operator modules, and   operator bimodules, are defined in a similar way to right modules.  {In this paper `module' means `right module' unless otherwise specified.}  An operator $A$-module $X$ is  \emph{nondegenerate} if every element of $X$ has the form $xa$ for some $a\in A$ and $x\in X$. Equivalently---by the Cohen factorisation theorem and our assumption that $A$ is approximately unital---$X$ is nondegenerate if the elements of the form $xa$ span a dense subspace of $X$. We shall work exclusively with nondegenerate modules.

If $E$ is an operator $A$-$B$ bimodule, where $A$ and $B$ are approximately unital operator algebras, and $X$ is a right operator $A$-module, then the \emph{Haagerup tensor product} $X\otimes^{\h}_A E$ is a right operator $B$-module. See \cite[1.5  \& 3.4]{BLM} for the definition and basic properties of the Haagerup tensor product.  If $X$ is a nondegenerate right (respectively, left) operator $A$-module, then the multiplication maps $X\otimes^{\h}_A A \to X$  (respectively, $A\otimes^{\h}_A X\to X$) are completely isometric isomorphisms, a fact that we shall frequently use without   comment. 

On operator modules over $C^*$-algebras there is a formal adjoint operation, which allows one to exchange left and right modules. If ${}_A X_B$ is an operator bimodule over $C^*$-algebras $A$ and $B$, then the adjoint $X^\adjoint$ is an operator $B$-$A$ bimodule: as a vector space $X^\adjoint$ is the complex-conjugate of $X$; the bimodule structure is defined by   $b\cdot x^\adjoint\cdot a \coloneqq (a^* xb^*)^\adjoint$ (where $x\mapsto x^\adjoint$ is the canonical conjugate-linear isomorphism $X\to X^\adjoint$); and the operator-space structure is given by the norms $\| [x^\adjoint_{ij}] \|_{M_n(X^\adjoint)}\coloneqq \| [x_{ji}] \|_{M_n(X)}$.  
The assignment $X\mapsto X^\adjoint$ is a functor, from operator $A$-$B$ bimodules to operator $B$-$A$ bimodules: each completely bounded bimodule map $t:X\to Y$ induces a completely bounded (with the same norm) bimodule map $t^\adjoint:X^\adjoint\to Y^\adjoint$ via $t^\adjoint(x^\adjoint)\coloneqq t(x)^\adjoint$. 
If $X$ is a right operator $A$-module, and $Y$ is a left operator $A$-module, then the map
\begin{equation}\label{eq:tensor-adjoint}
(X\otimes^{\h}_A Y)^\adjoint\to Y^\adjoint\otimes^{\h}_A X^\adjoint,\qquad (x\otimes y)^\adjoint\mapsto y^\adjoint\otimes x^\adjoint
\end{equation}
is a completely isometric isomorphism.

\subsection{The ${\opsp_1}$-category of operator modules}\label{subsec:opsp1}

For an approximately unital operator algebra $A$ we denote by $\opmod(A)$ the category whose objects are nondegenerate right operator $A$-modules, and whose morphisms are the completely bounded $A$-module maps. The space of such maps from $X$ to $Y$ will be denoted $\CB_A(X,Y)$.

It will be important in what follows to note that $\opmod(A)$ carries extra structure: its morphism sets $\CB_A(X,Y)$ are operator spaces, and composition of morphisms gives rise to completely contractive maps 
\[
\CB_A(Y,Z)\widehat{\otimes} \CB_A(X,Y) \to \CB_A(X,Z),
\]
where $\widehat{\otimes}$ is the projective tensor product of operator spaces (cf. \cite[1.2.19 \& 1.5.11]{BLM}). Let us refer to a category whose morphism sets and composition law satisfy the above conditions as an \emph{$\opsp_1$-category}. (This is an example of an \emph{enriched category}.) A functor $\functor{F}:\category{A}\to\category{B}$ of $\opsp_1$-categories will be called an \emph{$\opsp_1$-functor}, or a \emph{completely contractive functor}, if the induced maps on morphism sets are completely contractive linear maps of operator spaces. An example of an $\opsp_1$-functor is the functor 
\[
\opmod(A)\to \opmod(B),\qquad X\mapsto X\otimes^{\h}_A E
\]
of Haagerup tensor product with an operator bimodule; see \cite[Appendix]{Crisp-Frobenius} for a proof of this well-known fact.

Blecher has shown that the $\opsp_1$-category $\opmod(A)$ is a complete Morita invariant of $A$. To state this precisely, let us introduce the following terminology.

\begin{definition}\label{def:OS1-cats}
Let $\category{A}$ and $\category{B}$ be $\opsp_1$-categories. A \emph{complete contraction} in $\category{A}$ is a morphism $t$ with $\|t\|\leq 1$. We write $\category{A}_1$ for the subcategory of $\category{A}$ having the same objects as $\category{A}$, and having as morphisms the complete contractions in $\category{A}$. A \emph{completely isometric isomorphism} in $\category{A}$ is an isomorphism $t$ such that both $t$ and $t^{-1}$ are complete contractions: i.e., an isomorphism in $\category{A}_1$.
A \emph{completely isometric natural isomorphism} of functors $\functor{F},\functor{G}:\category{A}\to \category{B}$  is a natural isomorphism $\xi:\functor{F}\to \functor{G}$ such that for each $X\in \category{A}$ the morphism $\xi_X:\functor{F}(X)\to \functor{G}(X)$ is a completely isometric isomorphism. A \emph{completely isometric equivalence} between   $\category{A}$ and $\category{B}$ is an $\opsp_1$-functor $\functor{F}:\category{A}\to\category{B}$ for which there exists an $\opsp_1$-functor $\functor{G}:\category{B}\to\category{A}$ and completely isometric natural isomorphisms $\functor{F}\circ\functor{G}\cong \id_{\category{B}}$ and $\functor{G}\circ\functor{F}\cong \id_{\category{A}}$. 
\end{definition}

We note that if $\functor{F}:\category{A}\to \category{B}$ is an $\opsp_1$-functor, then $\functor{F}$ restricts to a functor $\functor{F}_1:\category{A}_1\to \category{B}_1$. If $\functor{F}$ is a completely isometric equivalence  then $\functor{F}_1$ is an equivalence.

The following result is due to Blecher \cite[Theorem 1.2]{Blecher-cstarMorita}.

\begin{theorem}\label{thm:Blecher-Morita}
If $A$ and $B$ are $C^*$-algebras and $\functor{F}:\opmod(A)\to \opmod(B)$ is a completely isometric equivalence, then $\functor{F}$ is completely isometrically isomorphic to the functor of Haagerup tensor product with a Morita equivalence bimodule.
\end{theorem}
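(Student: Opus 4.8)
The plan is to adapt the classical Eilenberg--Watts and Morita theorems to the $\opsp_1$-enriched setting: I would extract a bimodule by evaluating $\functor{F}$ on the regular module, and then exhibit a completely isometric natural isomorphism between $\functor{F}$ and Haagerup tensor product with that bimodule. First, view $A$ as a nondegenerate right operator $A$-module over itself and set $E\coloneqq\functor{F}(A)$, a nondegenerate right operator $B$-module. Left multiplication gives a completely isometric homomorphism $A\to\CB_A(A,A)$, $a\mapsto L_a$, and composing with the action of the $\opsp_1$-functor $\functor{F}$ on morphism spaces yields a completely contractive homomorphism $A\to\CB_B(E,E)$, $a\mapsto\functor{F}(L_a)$, which I take as the left $A$-action on $E$. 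Since $\functor{F}$ is completely contractive on morphism spaces, this makes $E$ an operator $A$-$B$ bimodule, and applying $\functor{F}$ to a contractive approximate unit of $A$ shows the left action is nondegenerate. Here I use that a completely isometric equivalence is automatically completely isometric on each morphism space: the completely isometric isomorphism $\functor{G}\functor{F}\cong\id$ forces $\|\functor{F}(t)\|_{\cb}=\|t\|_{\cb}$, and likewise at every matrix level.

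Next I would build the comparison transformation. For $X\in\opmod(A)$ and $x\in X$, right multiplication $r_x\colon A\to X$, $a\mapsto xa$, is an $A$-module map, and $(x,e)\mapsto\functor{F}(r_x)(e)$ is an $A$-balanced, completely contractive bilinear map $X\times E\to\functor{F}(X)$ (balancing is exactly the identity $r_{xa}=r_x\circ L_a$ carried through $\functor{F}$). It therefore factors through a completely contractive $B$-module map
\[
\eta_X\colon X\otimes^{\h}_A E\longrightarrow\functor{F}(X),
\]
natural in $X$. At $X=A$ the map $\eta_A$ is identified with the multiplication isomorphism $A\otimes^{\h}_A E\cong E$, hence is a completely isometric isomorphism; and because both $\functor{F}$ and $\argument\otimes^{\h}_A E$ commute with the formation of column (free) modules $C_I(A)$ and of direct sums, $\eta$ is a completely isometric isomorphism on every free module.

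The decisive step is to promote this to arbitrary $X$, and this is where I expect the main obstacle to lie. Every nondegenerate operator $A$-module admits a presentation $C_J(A)\to C_I(A)\to X\to0$ by free modules, exhibiting $X$ as a cokernel. Both functors preserve this cokernel---the Haagerup tensor product preserves all cokernels, as recalled in the introduction, and $\functor{F}$, being an equivalence, preserves all colimits---so by naturality of $\eta$ and the isomorphisms already established on the free terms, $\eta_X$ is an isomorphism. The delicate point is to carry out this argument at the level of matrix norms rather than underlying vector spaces, so as to conclude that $\eta_X$ is \emph{completely isometric} and not merely a completely bounded isomorphism; this requires checking that the free presentations and the cokernel-preservation are compatible with the operator-space structure. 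Granting this, $\functor{F}\cong\argument\otimes^{\h}_A E$ completely isometrically.

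Finally I would show $E$ is a Morita equivalence bimodule. Running the same construction on a quasi-inverse $\functor{G}$ produces a $B$-$A$ bimodule $E'=\functor{G}(B)$ with $\functor{G}\cong\argument\otimes^{\h}_B E'$. The completely isometric isomorphisms $\functor{G}\functor{F}\cong\id$ and $\functor{F}\functor{G}\cong\id$, together with associativity of the Haagerup tensor product, then yield completely isometric natural isomorphisms $\argument\otimes^{\h}_A(E\otimes^{\h}_B E')\cong\id_{\opmod(A)}$ and $\argument\otimes^{\h}_B(E'\otimes^{\h}_A E)\cong\id_{\opmod(B)}$. Evaluating at the regular modules and invoking naturality upgrades these to bimodule isomorphisms $E\otimes^{\h}_B E'\cong A$ and $E'\otimes^{\h}_A E\cong B$; that is, $E$ is an invertible $A$-$B$ bimodule. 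For the $C^*$-algebras $A$ and $B$ this operator-algebraic Morita equivalence coincides with strong ($C^*$-)Morita equivalence, so $E$ may be taken to be an imprimitivity bimodule, completing the proof.
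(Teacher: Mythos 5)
A preliminary remark: the paper offers no proof of this statement --- it is quoted as Blecher's theorem \cite[Theorem 1.2]{Blecher-cstarMorita} --- so your proposal has to be measured against Blecher's argument, and the difficulty of that argument sits exactly at the steps your sketch treats as formal. The central gap is an enrichment mismatch. The $\opsp_1$-structure of $\opmod(A)$ makes composition completely contractive for the operator-space \emph{projective} tensor product $\widehat{\otimes}$, so everything you can extract from ``$\functor{F}$ is completely isometric on morphism spaces'' is contractivity of $\widehat{\otimes}$-type; but your first two steps need \emph{Haagerup}-type (multiplicative) complete contractivity, which is strictly stronger, since the canonical complete contraction runs $X\widehat{\otimes}Y\to X\otimes^{\h}Y$ and not conversely. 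Concretely: a completely contractive nondegenerate homomorphism $A\to\CB_B(E)$ does \emph{not} make $E$ an operator $A$-$B$-bimodule; for that the homomorphism must take values in the left multiplier algebra $\mathcal{M}_l(E)$, in general a proper subalgebra of $\CB_B(E)$ carrying a larger norm. (The distinction is the very one the paper flags in its introduction: the dual $X^{\dual}$ of an operator $A$-module, with its dual action, carries a completely contractive homomorphism of $A$ into $\CB(X^{\dual})$, yet is in general \emph{not} an operator module.) The same problem blocks $\eta_X$: the pairing $(x,e)\mapsto \functor{F}(r_x)(e)$ is completely contractive for $\widehat{\otimes}$ (this is what evaluation against $\CB_B(E,\functor{F}(X))$ gives), but to factor it through $X\otimes^{\h}_A E$ you need the matrix inequality $\|[\textstyle\sum_j \functor{F}(r_{x_{ij}})(e_{jk})]\|\le\|[x_{ij}]\|\,\|[e_{jk}]\|$, which is precisely an operator-module-type condition and does not follow from naturality. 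Proving that $A$ acts on $\functor{F}(A)$ by multipliers --- indeed that $\functor{F}(A)$ is a Hilbert $C^*$-module over $B$ on which $A$ acts adjointably --- \emph{is} the content of Blecher's theorem; he obtains it from operator-space multiplier theory and his characterization of Hilbert $C^*$-modules among operator modules, not from the Eilenberg--Watts formalism.

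Your ``decisive step'' also rests on a false premise: nondegenerate operator $A$-modules need not admit presentations $C_J(A)\to C_I(A)\to X\to 0$. Already over $A=\C$, the row Hilbert space $R_2$ is not a complete quotient of any $\ell^1$-direct sum of column spaces: dualizing, such a quotient map would yield a complete isometry of $C_2$ into an $\ell^\infty$-product of row spaces, whereas every complete contraction $C_2\to R_I$ is a Hilbert--Schmidt contraction and therefore maps the element $\begin{pmatrix} e_1 \\ e_2\end{pmatrix}$ of $M_{2,1}(C_2)$, whose norm is $\sqrt{2}$, to an element of norm at most $1$. In addition, $\opmod(A)$ lacks general infinite coproducts, and there is no justification for ``$\functor{F}$ commutes with $C_I(\cdot)$'': $C_I(A)=C_I\otimes^{\h}A$ is neither an ordinary colimit nor a copower for the $\widehat{\otimes}$-enrichment, so even an enriched equivalence need not preserve it --- unless one already knows $\functor{F}$ is a Haagerup tensor functor, which is what is being proved. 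Finally, your closing step (an invertible operator bimodule between $C^*$-algebras is an imprimitivity bimodule) is true but is itself a nontrivial theorem of Blecher--Muhly--Paulsen rather than a remark. A telling symptom of the gaps: apart from that last step, your argument nowhere uses self-adjointness, so if it were correct it would apply verbatim to approximately unital non-selfadjoint operator algebras; Blecher's proof, by contrast, uses the $C^*$-structure of both algebras essentially, and Section \ref{subsec:Cstarmax} of the paper illustrates how categorical rigidity of this kind breaks down once one leaves the self-adjoint world.
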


In Section \ref{subsec:Cstarmax} we shall make use of the categorical notion of \emph{kernels} and \emph{cokernels}. Recall (e.g., from \cite[VIII.1]{MacLane}) that a \emph{kernel} of a morphism $t:X\to Y$ in a category $\category{A}$ with a zero object is a morphism $i:I\to X$ such that $t\circ i=0$, and such that any other morphism $j:J\to X$ having $t\circ j=0$ factors uniquely as a composition $J\to I\xrightarrow{i} X$. The notion of a \emph{cokernel} of $t$ is defined analogously as a map $q:Y\to Q$ which satisfies $q\circ t=0$ and which is universal for this property. Kernels and cokernels, when they exist, are unique up to isomorphism.

\begin{lemma}\label{lem:ker-coker}
Let $A$ be an operator algebra and let $t:X\to Y$ be a morphism in $\opmod(A)$ (respectively, in $\opmod(A)_1$). The inclusion $i:\ker(t)\to X$ is a kernel of $t$, and the quotient mapping $q:Y\to Y/\overline{\image(t)}$ is a cokernel of $t$ in $\opmod(A)$ (respectively, in $\opmod(A)_1$). In particular, the kernels in $\opmod(A)$ are precisely the complete embeddings, while the kernels in $\opmod(A)_1$ are precisely the complete isometries.
\end{lemma}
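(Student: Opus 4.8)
I need to establish two claims: (1) that $i\colon\ker(t)\to X$ is a kernel of $t$ and $q\colon Y\to Y/\overline{\image(t)}$ is a cokernel of $t$, in both $\opmod(A)$ and $\opmod(A)_1$; and (2) the resulting characterisation of kernels as complete embeddings (resp.\ complete isometries). The first claim is essentially a verification of universal properties, and the main subtlety is purely operator-space-theoretic: I must check that the factoring maps produced by the set-theoretic universal properties of kernel and cokernel of modules are in fact \emph{completely bounded} (resp.\ completely contractive), so that they are genuine morphisms in the relevant category.

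**Kernel.** First I would verify $t\circ i=0$, which is immediate. Given any $j\colon J\to X$ in $\opmod(A)$ with $t\circ j=0$, its image lies in the vector-space kernel $\ker(t)$; since $\ker(t)$ is closed (as $t$ is continuous) and is an $A$-submodule, it is an object of $\opmod(A)$ with the subspace operator-space structure, and $i$ is a complete isometry onto its image. The induced map $\tilde{\jmath}\colon J\to\ker(t)$ is the corestriction of $j$, hence is $A$-linear and unique with $i\circ\tilde{\jmath}=j$. Because $\ker(t)\subseteq X$ carries the subspace matrix norms, the corestriction $\tilde{\jmath}$ has the \emph{same} completely bounded norm as $j$; in particular $\tilde{\jmath}$ is completely bounded (resp.\ completely contractive when $j$ is), so $\tilde{\jmath}$ is a morphism in $\opmod(A)$ (resp.\ in $\opmod(A)_1$). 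This gives the kernel property in both categories.

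**Cokernel.** Dually, $q\circ t=0$ since $\image(t)\subseteq\overline{\image(t)}$ and $q$ kills $\overline{\image(t)}$. The quotient $Y/\overline{\image(t)}$ is an operator $A$-module via the quotient operator-space structure (here I use that the closure $\overline{\image(t)}$ is a closed submodule, so the quotient inherits a nondegenerate operator-module structure), and $q$ is a complete quotient map, hence completely contractive. Given any $p\colon Y\to P$ with $p\circ t=0$, continuity forces $p$ to annihilate $\overline{\image(t)}$, so $p$ descends uniquely to a linear $A$-module map $\bar{p}\colon Y/\overline{\image(t)}\to P$ with $\bar{p}\circ q=p$. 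The essential operator-space point is that, because $q$ is a complete quotient map, $\bar{p}$ is completely bounded with $\|\bar{p}\|_{\cb}=\|p\|_{\cb}$ (resp.\ completely contractive when $p$ is); this is the standard universal property of the operator-space quotient norm. Thus $\bar{p}$ is a morphism in the appropriate category, establishing the cokernel property.

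**The characterisation, and the main obstacle.** For the final sentence, the inclusion $\ker(t)\into X$ is always a complete isometry (resp.\ complete isometry in $\opmod(A)_1$ as well); conversely, any kernel is, by the uniqueness of kernels up to isomorphism, isomorphic to such an inclusion. In $\opmod(A)$ the isomorphism need only be a completely bounded isomorphism, so a kernel is a complete embedding; in $\opmod(A)_1$ the isomorphism must be a completely isometric isomorphism, so a kernel is a complete isometry. The one genuine obstacle worth flagging is the passage between the set-theoretic and the enriched (operator-space) levels: one must confirm that the quotient operator-space structure on $Y/\overline{\image(t)}$ is again a \emph{nondegenerate} operator $A$-module (so that it is an object of $\opmod(A)$ at all), and that the induced factoring maps have the claimed completely bounded norms. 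Both follow from standard facts about subspace and quotient operator-space structures and the behaviour of the module action under these constructions (cf.\ \cite[1.2.14, 1.5.11]{BLM}), so once these are invoked the proof is routine.
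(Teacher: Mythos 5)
Your proposal is correct and follows essentially the same route as the paper: a direct verification of the universal properties of the inclusion and quotient maps (the paper simply cites \cite[1.2.15]{BLM} for these routine checks, which you spell out, including the operator-space subtleties about corestriction norms and the quotient universal property), followed by the uniqueness-of-kernels argument for the final characterisation. Note that, like the paper, you only argue explicitly that kernels are complete embeddings (resp.\ complete isometries); the converse, that every complete embedding is a kernel, follows by observing that such a map is a kernel of the quotient onto its (closed) image, but both you and the paper leave this implicit.
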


\begin{proof}
The maps $i$ and $q$ are easily seen to possess the necessary universal properties (cf.~\cite[1.2.15]{BLM}). The last assertion follows from the uniqueness of kernels: every kernel of $t$ in $\opmod(A)$ (respectively, $\opmod(A)_1$) is conjugate, by a completely bounded (respectively, completely isometric) isomorphism, to the complete isometry $i:\ker(t)\to X$. 
\end{proof}

\subsection{Exactness of the Haagerup tensor product}\label{subsec:exactness}

The simplicity of the descent theorem for operator modules over $C^*$-algebras, compared to other kinds of algebras, is largely due to the following exactness property of the Haagerup tensor product, established by Anantharaman-Delaroche and Pop \cite[Corollary, p.411]{An-Pop}

\begin{theorem}\label{thm:exact}
Let $A$ be a $C^*$-algebra, let $X$ be a right operator $A$-module, and let $Y$ be a left operator $A$-module. Let $I$ be a closed submodule of $X$, and let $i:I\to X$ and $q:X\to X/I$ denote the inclusion and the quotient mappings, respectively. Consider the maps
\[
I\otimes^{\h}_A Y \xrightarrow{i\otimes \id_Y} X\otimes^{\h}_A Y \xrightarrow{q\otimes \id_Y} (X/I) \otimes^{\h}_A Y.
\]
\begin{enumerate}[\rm(a)]
\item The map  $i\otimes \id_Y$ is a complete isometry.
\item The map  $q\otimes \id_Y$ is a complete quotient mapping; that is, the induced map $(X\otimes^{\h}_A Y)/\ker(q\otimes \id_Y)\to (X/I)\otimes^{\h}_A Y$ is a completely isometric isomorphism.
\item The image of $i\otimes \id_Y$ equals the kernel of $q\otimes \id_Y$.
\end{enumerate}
\end{theorem}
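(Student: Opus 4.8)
The plan is to read the three assertions as the single statement that the functor $-\otimes^{\h}_A Y$ carries the short exact sequence $0\to I\xrightarrow{i} X\xrightarrow{q} X/I\to 0$ to a completely isometric short exact sequence, and then to arrange matters so that the only genuinely hard point is~(a). I would begin from the standard presentation of the module tensor product as a quotient of the Haagerup tensor product over $\C$. Writing $W=X\otimes^{\h}_{\C} Y$ and letting $N_X\subseteq W$ denote the closed balancing subspace $\overline{\lspan}\{xa\otimes y - x\otimes ay\}$, one has $X\otimes^{\h}_A Y = W/N_X$, and likewise $I\otimes^{\h}_A Y=(I\otimes^{\h}_{\C}Y)/N_I$ and $(X/I)\otimes^{\h}_A Y=((X/I)\otimes^{\h}_{\C}Y)/N_{X/I}$. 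The two basic properties of the Haagerup tensor product over $\C$ recorded in \cite{BLM}---that it is \emph{injective} and \emph{projective}---then supply the crucial spatial input: the map $\tilde\imath\colon I\otimes^{\h}_{\C}Y\to W$ is a complete isometry onto the closed subspace $J\coloneqq\overline{\lspan}(I\otimes Y)$, and $q\otimes\id_Y\colon W\to (X/I)\otimes^{\h}_{\C}Y$ is a complete quotient map with kernel $J$.

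Granting this, parts~(b) and~(c) become formal consequences of~(a). Projectivity identifies $(X/I)\otimes^{\h}_{\C}Y$ with $W/J$, and unwinding the balancing relations upstairs then gives $(X/I)\otimes^{\h}_A Y=W/\overline{J+N_X}$; hence $q\otimes_A\id_Y$ is the canonical map $W/N_X\to W/\overline{J+N_X}$, a composite of complete quotient maps and so itself a complete quotient map, which is~(b). The same computation shows $\ker(q\otimes_A\id_Y)=\overline{J+N_X}/N_X$, which is exactly the closure of $\image(i\otimes_A\id_Y)$. Since $q\circ i=0$ yields $\image(i\otimes_A\id_Y)\subseteq\ker(q\otimes_A\id_Y)$ for free, assertion~(c) reduces to the statement that $\image(i\otimes_A\id_Y)$ is already closed; and this is automatic once~(a) holds, because a complete isometry has complete, hence closed, range.

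Thus everything rests on~(a): that $i\otimes_A\id_Y\colon (I\otimes^{\h}_{\C}Y)/N_I\to W/N_X$ (the map induced by $\tilde\imath$) is a complete isometry. Unwinding the quotient norms, this is equivalent to the matricial distance identity
\[
\operatorname{dist}\bigl(u,\,M_n(N_I)\bigr)=\operatorname{dist}\bigl(\tilde\imath(u),\,M_n(N_X)\bigr)\qquad\text{for all } n \text{ and all } u\in M_n(I\otimes^{\h}_{\C}Y).
\]
The inequality $\operatorname{dist}(\tilde\imath(u),M_n(N_X))\le\operatorname{dist}(u,M_n(N_I))$ is immediate, since $\tilde\imath$ is completely isometric and $\tilde\imath(N_I)\subseteq N_X$. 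The content is the reverse inequality: approximating $\tilde\imath(u)$ inside the larger balancing subspace $N_X$ of $X$ must be no more efficient than approximating $u$ inside the balancing subspace $N_I$ of $I$ alone (equivalently, $\tilde\imath^{-1}(N_X)=N_I$ together with matching quotient norms).

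This reverse estimate is the step I expect to be the main obstacle, and it is precisely where the hypothesis that $A$ is a $C^*$-algebra is indispensable---over a general operator algebra the analogous injectivity fails. I would establish it following Anantharaman-Delaroche and Pop. The idea is to exploit the $C^*$-structure of $A$ through a contractive approximate unit $(e_\lambda)$ and its positive square roots $e_\lambda^{1/2}\in A$: nondegeneracy together with the balancing relation lets one rewrite any elementary tensor in the factored form $x\otimes y=\lim_\lambda x e_\lambda^{1/2}\otimes e_\lambda^{1/2}y$, and from this factorisation one manufactures a net of completely contractive maps on $W$ that fix the subspace $J$ and push $N_X$ approximately into $\tilde\imath(N_I)$. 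Such maps deliver the missing distance inequality and hence the complete isometry. Constructing them rigorously is the technical heart of the Anantharaman-Delaroche--Pop theorem, and for this final estimate I would invoke their result rather than reproving it from scratch.
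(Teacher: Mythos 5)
Your proposal reaches the same endpoint but distributes the work differently from the paper, and the difference is worth recording. The paper simply cites Anantharaman-Delaroche--Pop for \emph{both} (a) and (b), and proves (c) directly over $A$ by the universal property of $\otimes^{\h}_A$: since $\image(i\otimes\id_Y)\subseteq\ker(q\otimes\id_Y)$, one gets a map $(X\otimes^{\h}_A Y)/\image(i\otimes\id_Y)\to (X/I)\otimes^{\h}_A Y$, and one inverts it by checking that $(x+I,y)\mapsto (x\otimes y)+\image(i\otimes\id_Y)$ is a completely contractive $A$-balanced bilinear map. You instead present all three module tensor products as quotients of Haagerup tensor products over $\C$ and deduce (b) and (c) from over-$\C$ structure theory plus (a); like the paper, you defer (a) itself to Anantharaman-Delaroche--Pop. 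Your route has the merit of making the logical anatomy transparent---only (a) uses the $C^*$-hypothesis in an essential way, while (b) and (c) are bookkeeping with balancing subspaces---whereas the paper's treatment of (c) is shorter and stays entirely over $A$.

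There is one point where your argument, as written, over-claims. You attribute to ``projectivity'' the statement that $q\otimes\id_Y\colon W\to (X/I)\otimes^{\h}_{\C}Y$ is a complete quotient map \emph{with kernel $J=\overline{\lspan}(I\otimes Y)$}. Projectivity gives only the complete quotient property; the identification of the kernel is not formal---it is precisely the over-$\C$ instance of part (c), and your identification $(X/I)\otimes^{\h}_A Y\cong W/\overline{J+N_X}$, on which both your (b) and your (c) rest, does not follow without it (you would only get $W/\overline{K+N_X}$ with $K=\ker(q\otimes_{\C}\id_Y)\supseteq J$). The needed fact is true and known: over $\C$ one has $\ker(t\otimes\id_Y)=\ker(t)\otimes^{\h}Y$ for every completely bounded $t$ (Blecher--Smith; Allen--Sinclair--Smith, Theorem 2.4---results the paper itself quotes in the remark immediately following this theorem), and for the quotient map $q$ it can alternatively be proved by exactly the balanced-bilinear-map trick the paper uses for (c): the bilinear map $(X/I)\times Y\to W/J$, $(\bar x,y)\mapsto x\otimes y+J$, is well defined and completely contractive, hence induces an inverse to the induced map $W/J\to (X/I)\otimes^{\h}_{\C}Y$. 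With that citation or argument inserted, your derivations of (b) and (c) are sound; note also that (b) alone could be salvaged without the kernel fact, since the complete quotient map $W\to (X/I)\otimes^{\h}_{\C}Y\to (X/I)\otimes^{\h}_A Y$ factors through $W/N_X=X\otimes^{\h}_A Y$, and a factorization of a complete quotient map through a complete quotient map is again a complete quotient map.
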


\begin{proof}
The assertions (a) and (b) are proved in \cite{An-Pop} (cf.~\cite[1.5.5 \& 3.6.5]{BLM}). Part (c) is easier, and presumably well known: we clearly have an inclusion $\image(i\otimes \id_Y)\subseteq \ker(q\otimes \id_Y)$, whence a mapping
\begin{equation}\label{eq:exactness}
(X\otimes^{\h}_A Y)/ \image(i\otimes \id_Y) \to (X/I)\otimes^{\h}_A Y. 
\end{equation}
It is a straightforward matter to check that the formula
\[
(X/I) \times Y \to (X\otimes^{\h}_A Y)/ \image(i\otimes \id_Y),\quad (x+I,y)\mapsto (x\otimes y)+\image(i\otimes \id_Y)
\]
defines a completely contractive (in the sense of \cite[1.5.4]{BLM}) $A$-balanced bilinear map, and hence induces a map on $(X/I)\otimes^{\h}_A Y$ that is inverse to \eqref{eq:exactness}.
\end{proof}

\begin{remark}
Note that the equality $\ker(t\otimes \id_Y)=\ker(t) \otimes^{\h}_A Y$ is not generally valid if $t$ is not a quotient map. For example, let $B$ be a $C^*$-algebra containing $A$ as a subalgebra, and let $t:A\to A$ be given by $a\mapsto a_0 a$, where $a_0$ is an element of $A$ which is not a zero-divisor in $A$, but which is a zero-divisor in $B$. Then the kernel of $t$ is trivial, whereas the kernel of $t\otimes \id_B : A\otimes^{\h}_A B \to A\otimes^{\h}_A B$ is not trivial. Examples of this kind clearly do not arise when $A=\C$, and indeed Blecher and Smith have shown that in this case one does have $\ker(t\otimes \id_Y)= \ker(t)\otimes^{\h} Y$ for every completely bounded map $t$: see \cite[Remark, p.137]{Blecher-Smith} and \cite[Theorem 2.4]{Allen-Sinclair-Smith}. We thank David Blecher for helpful discussions of this and other aspects of the exactness of $\otimes^{\h}$.
\end{remark}

\subsection{Weak expectations}

As a final terminological preliminary, we recall (e.g., from \cite[Section 3.6]{Brown-Ozawa}; cf.~\cite{Lance}) that a \emph{weak expectation} for an inclusion of $C^*$-algebras $A\into D$ is a contractive, completely positive map $\iota:D\to A^{\dual\dual}$ (where $\dual$ indicates the dual Banach space) extending the canonical embedding $A\into A^{\dual\dual}$. Let us remark that such an $\iota$ is automatically a completely contractive $A$-bimodule map (see \cite[Section 1.5]{Brown-Ozawa}). A $C^*$-algebra $A$ has Lance's \emph{weak expectation property} if   every inclusion $A\into D$ admits a weak expectation.   Every nuclear $C^*$-algebra has the weak expectation property, as do many non-nuclear algebras. It is an open problem---equivalent, as shown by Kirchberg \cite{Kirchberg}, to Connes's embedding problem---to determine whether the $C^*$-algebras of  nonabelian free groups have the weak expectation property. Note that non-self-adjoint operator algebras never have (the appropriate non-self-adjoint analogue of) this property: see \cite[Theorem 7.1.7]{BLM}.

\section{The coalgebra of an adjoint pair of bimodules}\label{sec:coalgebras}

This section introduces the basic operator-algebra structures---operator coalgebras and their comodules---that appear in our descent theorems for Hilbert $C^*$-modules and operator modules. While we are mostly interested in coalgebras associated to $C^*$-correspondences,  interesting examples do arise in other contexts (see, e.g., Section \ref{subsec:Cstarmax}), and in order to cover such examples we work throughout Sections \ref{sec:coalgebras} and \ref{sec:descent-om} in the more general setting described below.  

\subsection{Adjoint pairs of operator bimodules}\label{subsec:adjoint-pairs}
 
\begin{definition}\label{def:adjunction}
Let $A$ and $B$ be approximately unital operator algebras,  and let ${}_A L_B$ and ${}_B R_A$ be a pair of nondegenerate operator bimodules. We shall call $(L,R)$ an \emph{adjoint pair of bimodules} if we are given completely contractive  bimodule maps
\[
\eta:A\to L\otimes^{\h}_B R\qquad \text{and}\qquad \epsilon:R\otimes^{\h}_A L  \to B
\]
(referred to, respectively, as the \emph{unit} and \emph{counit}) for which the diagrams
\begin{equation}\label{eq:triangle}
\xymatrix@C=40pt@R=40pt{ 
R\otimes^{\h}_A A \ar[r]^-{\id_R\otimes\eta} \ar[d]_-{\cong} & R\otimes^{\h}_A L\otimes^{\h}_B R \ar[d]^-{\epsilon\otimes\id_R} \\
R & B\otimes^{\h}_B R \ar[l]_-{\cong} 
}
\qquad 
\xymatrix@C=40pt@R=40pt{
A\otimes^{\h}_A L \ar[r]^-{\eta\otimes\id_L} \ar[d]_-{\cong} & L\otimes^{\h}_B R\otimes^{\h}_A L \ar[d]^-{\id_L\otimes\epsilon} \\ L & L\otimes^{\h}_B B \ar[l]_-{\cong}
}
\end{equation}
commute. In this situation we will use the notation $K\coloneqq L\otimes^{\h}_A R$ and $C\coloneqq R\otimes^{\h}_A L$.
\end{definition}
 
To explain the choice of terminology, let us consider for each $X\in \opmod(A)$ the map $\eta_X:X\to X \otimes^{\h}_A K$ defined as the composition
\[
\eta_X:X\xrightarrow{\cong} X\otimes^{\h}_A A \xrightarrow{\id_X\otimes \eta} X \otimes^{\h}_A K,
\]
and for each $Y\in \opmod(B)$ the map $\epsilon_Y:Y\otimes^{\h}_B C\to Y$ defined as the composition
\[
\epsilon_Y:Y\otimes^{\h}_B C\xrightarrow{\id_Y\otimes \epsilon} Y\otimes^{\h}_B B
\xrightarrow{\cong} Y.
\]
The maps $\eta_X$ and $\epsilon_Y$ are natural in $X$ and $Y$ (respectively), meaning that for all morphisms $t\in \CB_A(X,X')$ and $s\in \CB_B(Y,Y')$ the diagrams
\begin{equation}\label{eq:natural}
\xymatrix@C=40pt{
X \ar[r]^-{t} \ar[d]_-{\eta_X} & X' \ar[d]^-{\eta_{X'}} \\
X\otimes^{\h}_A K \ar[r]^-{t\otimes \id_K} & X'\otimes^{\h}_A K
}
\qquad
\xymatrix@C=40pt{
Y \ar[r]^-{s} & Y' \\
Y\otimes^{\h}_B C \ar[r]^-{s\otimes \id_C} \ar[u]^-{\epsilon_Y} & Y'\otimes^{\h}_B C \ar[u]_-{\epsilon_{Y'}}
}
\end{equation}
commute. These natural maps are the unit and counit of a completely isometric adjunction between the Haagerup tensor product functors 
\[
\functor{L}:\opmod(A)\xrightarrow{X\mapsto X\otimes^{\h}_A L} \opmod(B) \quad \text{and}\quad \functor{R}:\opmod(B)\xrightarrow{Y\mapsto Y\otimes^{\h}_B R} \opmod(A).
\]
That is to say, the map
\[
\CB_B(X\otimes^{\h}_A L, Y) \xrightarrow{t\mapsto (t\otimes \id_R)\circ \eta_X} \CB_A(X, Y\otimes^{\h}_B R)
\]
is a completely isometric natural isomorphism, whose inverse is given by $s\mapsto \epsilon_Y\circ (s\otimes \id_{L})$. See \cite[Chapter IV]{MacLane} for the general notion of adjoint functors.

\begin{example}[$C^*$-correspondences]\label{ex:corresp}
Let $A$ and $B$ be $C^*$-algebras, and let ${}_A F_B$ be a \emph{$C^*$-correspondence} from $A$ to $B$: that is, a right Hilbert $C^*$-module over $B$ equipped with a $*$-homomorphism from $A$ into the $C^*$-algebra of adjointable operators on $F$. (See \cite{Lance} or \cite[Chapter 8]{BLM} for the relevant background on Hilbert $C^*$-modules.) Equip $F$ with its canonical operator space structure (cf.~\cite[8.2]{BLM}). Blecher has shown that the tensor product functor $\otimes^{\h}_A F:\opmod(A)\to \opmod(B)$ coincides, on the subcategory of Hilbert $C^*$-modules, with the functor $\functor{F}_2$ considered in the introduction; see \cite[Theorem 8.2.11]{BLM}.

Suppose now that the image of $A$ under the action homomorphism lies in the ideal $\Compact_B(F)$ of \emph{compact} operators on $F$. Let $F^\adjoint$ denote the operator $B$-$A$ bimodule adjoint to $F$. Another theorem of Blecher \cite[Corollary 8.2.15]{BLM} identifies $\Compact_B(F)$ with the Haagerup tensor product $K=F\otimes^{\h}_B F^\adjoint$, and in \cite{CH-Kadison} we observed that the unit 
\[ 
\eta: A\xrightarrow{\textrm{action}} \Compact_B(F) \cong F\otimes^{\h}_B F^\adjoint
\]
and the counit
\[
\epsilon: F^\adjoint\otimes^{\h}_B F \xrightarrow{x^\adjoint\otimes y\mapsto \langle x|y\rangle} B
\]
make $(F,F^\adjoint)$ into an adjoint pair of operator bimodules. See \cite{CH-Kadison} and \cite{Crisp-Frobenius} (where the opposite convention regarding left versus right modules is used) for more on this example. See also  \cite{Blecher-Kaad-Mesland} for an analogue of $C^*$-correspondences, and `compact' operators thereon, for non-selfadjoint operator $*$-algebras.
\end{example}

\begin{example}[Subalgebras]\label{ex:subalg}
Let $B$ be an operator algebra and let $A\subseteq B$ be a norm-closed subalgebra, which is nondegenerate in the sense that $AB=B=BA$. Then the unit 
\[
\eta: A\xrightarrow{\textrm{inclusion}} B \cong B\otimes^{\h}_B B
\]
and the counit
\[
\epsilon: B\otimes^{\h}_A B \xrightarrow{\textrm{multiplication}} B
\]
make $({}_A B_B, {}_B B_A)$ into an adjoint pair of operator bimodules. The corresponding functors are the restriction functor $\opmod(B)\to \opmod(A)$, and its left-adjoint induction functor $\opmod(A)\xrightarrow{X\mapsto X\otimes^{\h}_A B}\opmod(B)$. Note that if $A$ and $B$ are $C^*$-algebras then this example is an instance of the previous one: $B$ is a Hilbert $C^*$-module over itself, with inner product $\langle b_1 | b_2\rangle = b_1^* b$, and we have $\Compact_B(B)=B$ (acting by left multiplication). Note further that the operator $B$-bimodule $C=B\otimes^{\h}_A B$  can in this case be identified with the closed linear span of the products $b_1*b_2$ in the amalgamated free product $C^*$-algebra $B*_A B$; see \cite[Theorem 3.1]{CES}, \cite[Lemma 1.14]{Pisier}, \cite[p.515]{Ozawa}.
\end{example}

\begin{remark}
We insist upon $\eta$ and $\epsilon$ being completely contractive, rather than merely completely bounded, in order to ensure that we eventually obtain completely isometric equivalences of $\opsp_1$-categories (cf.~Theorem \ref{thm:Blecher-Morita}).   As is often the case in operator algebra theory (see \cite[Chapter 5]{BLM}, for example), much of what we do below admits a parallel `completely bounded' version. We shall not say any more about that here, except to point out an example: the unit of the adjunction in \cite[Theorem 4.15]{CH-Kadison} is not completely contractive. (The unit can be made completely contractive by a simple rescaling, but then the counit would cease to be completely contractive.)
\end{remark}

\subsection{Operator coalgebras and comodules}\label{subsec:coalgebras}

We temporarily suspend the standing notation from the previous section to make a general definition.

\begin{definition}\label{def:coalgebra}
Let $B$ be an approximately unital operator algebra. An \emph{operator $B$-coalgebra} is a nondegenerate operator $B$-bimodule $C$ equipped with completely contractive $B$-bimodule maps 
\[
\delta: C\to C\otimes^{\h}_B C\qquad \textrm{and}\qquad \epsilon:C\to B
\]
making the diagrams
\[
\xymatrix@C=40pt{ 
C \ar[r]^-{\delta} \ar[d]_-{\delta} & C\otimes^{\h}_B C \ar[d]^-{\delta\otimes \id_C} \\ 
C\otimes^{\h}_B C \ar[r]_-{\id_C\otimes\delta} & C\otimes^{\h}_B C\otimes^{\h}_B C 
}
\qquad 
\xymatrix@C=40pt{
C\otimes^{\h}_B C \ar[d]_-{\id_C\otimes\epsilon} & C \ar[l]_-{\delta} \ar[r]^-{\delta} \ar[d]^-{\id_C} & C\otimes^{\h}_B C \ar[d]^-{\epsilon\otimes\id_C} \\
C\otimes^{\h}_B B \ar[r]^-{\cong} & C & B\otimes^{\h}_B C \ar[l]_-{\cong}
}
\]
commute.

A  \emph{right operator $C$-comodule} is a pair $(Z,\delta_Z)$ consisting of a nondegenerate right  operator $B$-module $Z$ and a completely contractive $B$-module map $\delta_Z:Z\to  Z\otimes^{\h}_B C$ making the diagrams 
\begin{equation}\label{eq:comodule-axioms}
\xymatrix@C=40pt{ 
Z \ar[r]^-{\delta_Z} \ar[d]_-{\delta_Z} & Z\otimes^{\h}_B C \ar[d]^-{\id_Z\otimes \delta} \\ 
Z\otimes^{\h}_B C \ar[r]_-{\delta_Z\otimes \id_C} & Z\otimes^{\h}_B C\otimes^{\h}_B C 
}
\qquad\text{and}\qquad 
\xymatrix@C=40pt{
Z \ar[r]^-{\delta_Z} \ar[d]_-{\id_Z} & Z\otimes^{\h}_B C \ar[dl]^-{\epsilon_Z} \\
Z & 
}
\end{equation}
commute. The space of morphisms $\CB_C(Z,W)$ between operator $C$-comodules $(Z,\delta_Z)$ and $(W,\delta_W)$ is defined to be the set of maps $t\in \CB_B(Z,W)$ for which the diagram 
\begin{equation}\label{eq:comodule-morphism}
\xymatrix@C=40pt{
Z  \ar[d]_-{t} \ar[r]^-{\delta_{Z}} & Z\otimes^{\h}_B C  \ar[d]^-{t\otimes \id_C} \\
W \ar[r]^-{\delta_{W}} & W\otimes^{\h}_B C
}
\end{equation}
commutes. Let $\opcomod(C)$ denote the category of right operator $C$-comodules, with morphisms $\CB_C(Z,W)$ as above. We denote by $\opcomod^{\cb}(C)$ the category whose objects are pairs $(Z,\delta_Z)$, where $Z$ is an operator $B$-module, and $\delta_Z:Z\to Z\otimes^{\h}_B C$ is a completely \emph{bounded} $B$-module map making the diagrams \eqref{eq:comodule-axioms} commute. The morphisms in $\opcomod^{\cb}(C)$ are the completely bounded $B$-module maps making the diagram \eqref{eq:comodule-morphism} commute.
\end{definition}

\begin{remarks}\label{rem:coalgebras-comodules}\begin{enumerate}[\rm(a)]
 \item The morphism spaces $\CB_C(Z,W)$ in $\opcomod(C)$ and $\opcomod^{\cb}(C)$ are closed subspaces of the operator spaces $\CB_B(Z,W)$; hence $\opcomod(C)$ and $\opcomod^{\cb}(C)$ are an $\opsp_1$-categories. 
\item The coproduct $\delta$ for an operator coalgebra is a complete isometry: indeed, $\delta$ is a complete contraction, and is split by the complete contraction $\id_C\otimes \epsilon$. Similarly, the coaction $\delta_Z$ for an object in $\opcomod(C)$ (respectively, in $\opcomod^{\cb}(C)$) is a complete isometry (respectively, a complete embedding). 
\item The notions of left  comodules and  bi-comodules, and of morphisms between them, are defined via the obvious modifications of the above definition. Following our convention for modules, `comodule'  means `right comodule' in the absence of further specification.
\item  
The category $\opcomod^{\cb}(C)$ is the Eilenberg-Moore category for the comonad $\functor{C}\coloneqq \otimes^{\h}_B C$ on $\opmod(B)$.  (See \cite[Chapter VI]{MacLane} for the terminology.) Since the functor $\functor{C}$ and the structure maps $\delta$ and $\epsilon$ are completely contractive, $\functor{C}$ restricts to a comonad $\functor{C}_1$ on $\opmod(B)_1$, whose Eilenberg-Moore category is  $\opcomod(C)_1$. The category $\opcomod(C)$ thus sits between the Eilenberg-Moore categories for $\functor{C}_1$ and $\functor{C}$: it has the same objects as the former, and the same morphisms as the latter.
\end{enumerate}
\end{remarks}

\subsection{The coalgebra of an adjoint pair}\label{subsec:adjoint-coalgebra}

We now return to the setting of Section \ref{subsec:adjoint-pairs}: $A$ and $B$ are approximately unital operator algebras, and $({}_A L_B,{}_B R_A)$ is an adjoint pair of operator bimodules with unit $\eta:A\to K\coloneqq L\otimes^{\h}_B R$ and counit $\epsilon:C\coloneqq R\otimes^{\h}_A L\to B$.

\begin{definition}\label{def:C}
On the operator $B$-bimodule $C=R\otimes^{\h}_A L$ we consider the coproduct $\delta: C\to C\otimes^{\h}_B C$
defined by
\[
\delta\coloneqq \eta_R\otimes \id_L:R \otimes^{\h}_A L  \to R  \otimes^{\h}_A L \otimes^{\h}_B R  \otimes^{\h}_A L. 
\]
The associativity of the Haagerup tensor product ensures that $\delta$ is coassociative, while  the commutativity of the diagrams \eqref{eq:triangle} ensures that 
\[
(\epsilon\otimes \id_C)\circ \delta = (\id_C\otimes \epsilon)\circ \delta = \id_C.
\]
  Thus the counit $\epsilon$ and the coproduct $\delta$ furnish $C$ with the structure of  an operator $B$-coalgebra.
\end{definition}

\begin{examples}\label{ex:comodule}
 \begin{enumerate}[\rm(a)]
\item The map $\delta_{L}: L  \to L\otimes^{\h}_A C$ defined as the composition
\[
L\xrightarrow{\cong} A\otimes^{\h}_A L \xrightarrow{\eta\otimes\id_L} K\otimes^{\h}_A L = L\otimes^{\h}_B C
\] 
furnishes $L $ with the structure of a right operator $C$-comodule. Similarly the map $\delta_R\coloneqq \eta_R$ makes $R$ a left operator $C$-comodule.
\item If $X$ is any operator $A$-module, then setting 
\[
\delta_{X \otimes^{\h}_A L}\coloneqq \eta_X\otimes \id_L = \id_X\otimes \delta_{L}
\]
yields an operator $C$-comodule $(X\otimes^{\h}_A L, \delta_{X\otimes^{\h}_A L})$.
\end{enumerate}
\end{examples}
 
\begin{definition}\label{def:comodules-comparison}
We consider the \emph{comparison functor} $\functor{L}_C: \opmod(A)\to \opcomod(C)$ defined on objects by $X\mapsto (X\otimes^{\h}_A L,\delta_{X\otimes^{\h}_A L})$ (see Example \ref{ex:comodule}), and on morphisms by $t\mapsto t\otimes \id_L$.
\end{definition}
 
The comparison functor $\functor{L}_C$ is given on morphisms by a Haagerup tensor product, so it is an $\opsp_1$-functor. Note that there is a forgetful functor $\opcomod(C)\to \opmod(B)$, $(Z,\delta_Z)\mapsto Z$, making the diagram
\begin{equation}\label{eq:comparison-diagram}
\xymatrix@C=40pt{
& \opcomod(C) \ar[d]^-{\textrm{forget}} \\
\opmod(A) \ar[r]_-{\functor{L}} \ar[ur]^-{\functor{L}_C} & \opmod(B)
}
\end{equation}
commute.

 \section{Descent of operator modules}  \label{sec:descent-om}
  
 The notation in this section is the same as in the previous one: $A$ and $B$ are approximately unital operator algebras, and $({}_A L_B,{}_B R_A)$ is an adjoint pair of operator bimodules with unit $\eta:A\to K\coloneqq L\otimes^{\h}_B R$ and counit $\epsilon:C\coloneqq R\otimes^{\h}_A L\to B$. 

\begin{theorem}\label{thm:WEP}
Suppose that $A$ is a $C^*$-algebra, and that there is a completely contractive $A$-bimodule map $\iota:K=L\otimes^{\h}_B R\to A^{\dual\dual}$ such that $\iota\circ \eta:A\to A^{\dual\dual}$ is the canonical embedding.  Then the comparison functor $\functor{L}_C:\opmod(A)\to \opcomod(C)$ is a completely isometric equivalence.
\end{theorem}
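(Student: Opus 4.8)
The plan is to recognise $\functor{L}_C$ as the comparison functor for the comonad $\functor{C}=\otimes^{\h}_B C$ on $\opmod(B)$ arising from the adjunction $\functor{L}\dashv\functor{R}$, and to verify the hypotheses of (a completely isometric refinement of) the dual, comonadic form of Beck's theorem. Recall from Remark \ref{rem:coalgebras-comodules}(d) that the Eilenberg--Moore category of $\functor{C}$ is $\opcomod^{\cb}(C)$; since the coaction $\delta_{X\otimes^{\h}_A L}=\eta_X\otimes\id_L$ of each object in the image of $\functor{L}_C$ is built from the completely contractive map $\eta$, the comparison functor in fact takes values in the subcategory $\opcomod(C)$ of contractive coactions, and it is an $\opsp_1$-functor by Definition \ref{def:comodules-comparison}. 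It therefore suffices to produce a quasi-inverse and to check that the unit and counit of the resulting equivalence are completely isometric.

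First I would construct the candidate quasi-inverse $\functor{R}_C:\opcomod(C)\to\opmod(A)$. Given a comodule $(Z,\delta_Z)$, the object $\functor{R}(Z)=Z\otimes^{\h}_B R$ carries two completely bounded $A$-module maps into $Z\otimes^{\h}_B C\otimes^{\h}_B R = Z\otimes^{\h}_B R\otimes^{\h}_A K$, namely $\delta_Z\otimes\id_R$ and the unit $\eta_{Z\otimes^{\h}_B R}$, and I would define $\functor{R}_C(Z)$ to be their equalizer, i.e.\ the kernel of their difference; such kernels exist in $\opmod(A)$ and are complete embeddings (indeed complete isometries on the contractive subcategory) by Lemma \ref{lem:ker-coker}. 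The verification that $\functor{R}_C$ is an $\opsp_1$-functor, and that $\functor{R}_C\functor{L}_C\cong\id$, amounts to showing that for each $X$ the map $\eta_X:X\to X\otimes^{\h}_A K$ exhibits $X$ as the equalizer of the two cofree maps $\eta_X\otimes\id_K$ and $\eta_{X\otimes^{\h}_A K}$ from $X\otimes^{\h}_A K$ to $X\otimes^{\h}_A K\otimes^{\h}_A K$.

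The main obstacle is the following. To reconstruct objects, and to prove $\functor{L}_C\functor{R}_C(Z)\cong(Z,\delta_Z)$, one needs the functor $\functor{L}=\otimes^{\h}_A L$ to carry the equalizers defining $\functor{R}_C$ to equalizers. As the Remark following Theorem \ref{thm:exact} shows, $\otimes^{\h}_A L$ does \emph{not} preserve arbitrary kernels, so this preservation must be established for precisely the class of forks at hand, and this is where the weak-expectation hypothesis enters decisively. The point is that $\iota:K\to A^{\dual\dual}$, recalling that $\iota\circ\eta$ is the canonical embedding $A\into A^{\dual\dual}$, furnishes a completely contractive $A$-bimodule retraction of $\eta$ into the bidual; tensoring $\iota$ in produces a contracting homotopy, valued in biduals, for each of the relevant $\eta$-forks. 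Consequently these forks are \emph{pure} (equivalently, split after passing to the appropriate dual), and hence \emph{absolute} equalizers, so they are preserved by every Haagerup tensor functor, in particular by $\functor{L}$. Making this rigorous is the delicate part: as noted in the introduction, the operator-space dual of an operator module need not be an operator module, so the duality argument must be run through $A^{\dual\dual}$ and normal modules rather than through naive operator-space duals, adapting the purity techniques of Mesablishvili and others to the operator-module setting. I expect to isolate this preservation property as a separate lemma.

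Granting that $\functor{L}$ preserves the relevant equalizers, the two natural isomorphisms follow by direct diagram chases: $\functor{R}_C\functor{L}_C(X)\cong X$ because $\eta_X$ is the (now genuinely preserved) equalizer of the cofree pair, and $\functor{L}_C\functor{R}_C(Z)\cong(Z,\delta_Z)$ because $\delta_Z$ realises $Z$ as the equalizer of its own cofree resolution and $\functor{L}$ carries the inclusion $\functor{R}_C(Z)\hookrightarrow Z\otimes^{\h}_B R$ to this equalizer inside $Z\otimes^{\h}_B C$. Conservativity of $\functor{L}$---the remaining Beck condition---follows from the same retraction, since $\eta_X$ is then a split monomorphism into the bidual. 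Finally, because every map in sight ($\eta$, $\epsilon$, $\delta$, the equalizer inclusions, and the two comparison functors) is completely contractive or completely isometric, and because kernels in $\opmod(A)_1$ are complete isometries (Lemma \ref{lem:ker-coker}), the unit and counit are completely isometric isomorphisms; this upgrades the equivalence to a completely isometric equivalence of $\opsp_1$-categories and identifies the essential image as precisely $\opcomod(C)$.
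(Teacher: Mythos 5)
Your architecture coincides with the paper's own proof: it too runs Beck's comonadicity argument, defines $\functor{R}_C(Z)$ as the kernel of $\id_Z\otimes\eta_R-\delta_Z\otimes\id_R$ (Definition/Lemma \ref{lem:RC}), uses the weak-expectation map $\iota$ in a duality argument run through $A^{\dual\dual}$ (Lemma \ref{lem:sigma}), and finishes with the diagram chases you describe (Proposition \ref{prop:exp}). So you have located the right skeleton and correctly identified the delicate step. The problem is the mechanism you propose for that step.

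The inference ``pure, hence absolute equalizers, so preserved by every Haagerup tensor functor'' is false, and this is exactly the point your deferred lemma would have to carry. Absolute equalizers are the split ones---split in $\opmod(A)$ itself---and those are preserved by all functors; your forks split only after applying $(\argument)^\dual$ and factoring through $A^{\dual\dual}$, which is strictly weaker and implies no preservation formally. What dual-splitting actually yields (and what the paper extracts in Lemma \ref{lem:WEP-heart}) is that $h^\dual$ admits a generalized inverse, $h^\dual s h^\dual = h^\dual$, whence $h$ has closed range and $X/\ker(h)\to\image(h)$ is a completely bounded isomorphism; that is, $h$ is conjugate to a quotient map onto its image. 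To pass from this to the statement you need---that $\ker(h)\otimes^{\h}_A L$ maps onto $\ker(h\otimes\id_L)$---one must know that $\otimes^{\h}_A L$ preserves kernels of \emph{quotient} maps, and that is precisely the Anantharaman-Delaroche--Pop exactness theorem (Theorem \ref{thm:exact}), valid because $A$ is a $C^*$-algebra; this is how the paper concludes in Corollary \ref{cor:contractible-equalisers}. This input cannot be dispensed with: the Remark following Theorem \ref{thm:exact} exhibits injective completely bounded maps $t$ for which $\ker(t\otimes\id_Y)$ strictly contains $\ker(t)\otimes^{\h}_A Y$, so no purity-style argument alone can deliver preservation. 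Your proposal never invokes Theorem \ref{thm:exact} in the positive direction (it appears only in your negative remark), yet the same theorem is also needed for the isometric refinement (via Lemma \ref{lem:eta-ci}) and for the identification $\functor{R}_C\functor{L}_C(X)\cong X$, where the paper applies it to the quotient $K\to K/A$. Closing your ``separate lemma'' therefore requires combining the $\sigma_X$-duality construction with An--Pop exactness, exactly as in Lemmas \ref{lem:sigma}, \ref{lem:WEP-heart} and Corollary \ref{cor:contractible-equalisers}; the purity-implies-absoluteness route you sketch would fail.
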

 
\begin{remarks}\label{rem:imageOM}
\begin{enumerate}[\rm(a)]
\item A more explicit version of the theorem, including an identification of an inverse $\opcomod(C)\to\opmod(A)$, is stated below as Proposition \ref{prop:exp}. 
\item If $A$ has the weak expectation property (e.g., if $A$ is nuclear) then the hypothesis involving $\iota$ can be replaced by the simpler hypothesis that $L$ is faithful as a left $A$-module: see Lemma \ref{lem:WEP-faithful}
\item In view of \eqref{eq:comparison-diagram}, Theorem \ref{thm:WEP} gives the following characterisation of the image of $\functor{L}$: an operator $B$-module $Y$ is completely isometrically isomorphic to one of the form $X\otimes^{\h}_A L$ if and only if $Y$ admits a $C$-comodule structure; and a $B$-linear map $t\in\CB_B(Y_1, Y_2)$ between two such modules is of the form $s\otimes \id_L$ if and only if $t$ is a map of $C$-comodules.
\item The significance of $\functor{L}_C$ being a \emph{completely isometric} equivalence is that, in view of Theorem \ref{thm:Blecher-Morita}, the $\opsp_1$-category $\opcomod(C)$ becomes a complete Morita invariant of the $C^*$-algebra $A$.
\end{enumerate}
\end{remarks} 
 
We shall prove Theorem \ref{thm:WEP} as a consequence of a theorem of Beck (cf.~\cite[Section VI.7]{MacLane}, \cite[Theorem 3.3.14]{ttt}, or \cite[Theorem 4.4.4]{Borceux}). In the following sequence of lemmas we verify that the hypotheses of Beck's theorem are satisfied.

\begin{lemma}\label{lem:K}
The operator space $K$ is completely isometrically isomorphic to an approximately unital operator algebra in such a way that $\eta:A\to K$ is an algebra homomorphism.
\end{lemma}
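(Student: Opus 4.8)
The plan is to equip $K=L\otimes^{\h}_B R$ with a multiplication manufactured from the counit $\epsilon$, and then to verify in turn that this multiplication is associative and completely contractive (so that $K$ becomes an operator algebra), that $\eta$ is multiplicative, and that $K$ is approximately unital.

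First I would define the product. Since $K$ is an operator $A$-bimodule (with $A$ acting on the left through $L$ and on the right through $R$), associativity of the Haagerup tensor product gives a completely isometric identification $K\otimes^{\h}_A K\cong L\otimes^{\h}_B R\otimes^{\h}_A L\otimes^{\h}_B R$ in which the balancing over $A$ occurs in the central factor $R\otimes^{\h}_A L=C$. I would then set
\[
m\coloneqq \id_L\otimes\epsilon\otimes\id_R : K\otimes^{\h}_A K\to L\otimes^{\h}_B B\otimes^{\h}_B R\cong L\otimes^{\h}_B R=K,
\]
which is a completely contractive $A$-bimodule map because $\epsilon$ is a completely contractive $B$-bimodule map and the Haagerup tensor product of complete contractions is a complete contraction.

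Next, associativity. On $K\otimes^{\h}_A K\otimes^{\h}_A K\cong L\otimes^{\h}_B R\otimes^{\h}_A L\otimes^{\h}_B R\otimes^{\h}_A L\otimes^{\h}_B R$ both composites $m\circ(m\otimes\id_K)$ and $m\circ(\id_K\otimes m)$ are equal to $\id_L\otimes\epsilon\otimes\epsilon\otimes\id_R$, since the two copies of $\epsilon$ act on disjoint tensor slots; this is just bifunctoriality and associativity of $\otimes^{\h}$, so no genuine computation is needed. To see that $K$ is completely isometrically isomorphic to an operator algebra, I would precompose $m$ with the canonical complete quotient map $K\otimes^{\h}_{\C} K\to K\otimes^{\h}_A K$ to obtain a completely contractive, associative multiplication on $K$ over $\C$, and then invoke the Blecher--Ruan--Sinclair characterisation of operator algebras \cite[Theorem 2.3.2]{BLM}.

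The heart of the argument is the multiplicativity of $\eta$, and here the triangle identities \eqref{eq:triangle} do the work. Using the second identity I would show that $m\circ(\eta\otimes\id_K):A\otimes^{\h}_A K\to K$ coincides with the left action of $A$ on $K$: feeding in $a\otimes(l\otimes r)$, the map $\id_L\otimes\epsilon$ applied after $\eta\otimes\id$ reduces the first two factors via exactly the composite $(\id_L\otimes\epsilon)\circ(\eta\otimes\id_L)=\id_L$, leaving $(a\cdot l)\otimes r$. Symmetrically, the first triangle identity gives that $m\circ(\id_K\otimes\eta)$ is the right action. Consequently $\eta(a)\eta(a')=m(\eta(a)\otimes_A\eta(a'))=a\cdot\eta(a')=\eta(aa')$, the last equality because $\eta$ is left $A$-linear, so $\eta$ is an algebra homomorphism. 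Finally, taking a contractive approximate unit $(e_\lambda)$ of $A$, the elements $\eta(e_\lambda)$ are contractions satisfying $\eta(e_\lambda)k=e_\lambda\cdot k\to k$ and $k\,\eta(e_\lambda)=k\cdot e_\lambda\to k$ for every $k\in K$, by the same identification of $m$ with the module actions together with the nondegeneracy of $K$ as an $A$-bimodule (inherited from that of $L$ and $R$); hence $\eta(e_\lambda)$ is a contractive approximate unit for $K$. I expect the only delicate points to be the bookkeeping of the associativity identifications and the passage to $\otimes^{\h}_{\C}$ needed to apply \cite[Theorem 2.3.2]{BLM}; the conceptual content is carried entirely by the triangle identities.
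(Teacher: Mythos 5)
Your proposal is correct and follows essentially the same route as the paper: the product $\id_L\otimes\epsilon\otimes\id_R$ is exactly the paper's multiplication $(l_1\otimes r_1,l_2\otimes r_2)\mapsto l_1\epsilon(r_1\otimes l_2)\otimes r_2$, multiplicativity of $\eta$ is deduced from the triangle identities \eqref{eq:triangle}, the contractive approximate unit is $\eta$ applied to one for $A$ using nondegeneracy, and the conclusion is obtained from the Blecher--Ruan--Sinclair theorem \cite[Theorem 2.3.2]{BLM}. Your write-up merely makes explicit (via the identification of $m\circ(\eta\otimes\id_K)$ and $m\circ(\id_K\otimes\eta)$ with the module actions) some details the paper leaves to the reader.
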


\begin{proof}
The completely contractive map $\mu:K\times K\to K$ defined by
\[
(l_1\otimes r_1, l_2\otimes r_2)\mapsto l_1\epsilon(r_1\otimes l_2)\otimes r_2 
\]
gives $K$ the structure of an associative algebra, and the commutativity of \eqref{eq:triangle} ensures that the map $\eta:A\to K$ is an algebra homomorphism. The quotient map $K\otimes^{\h} K\to K\otimes^{\h}_A K$ and the counit map $R\otimes^{\h}_A L\to B$ are both completely contractive, and so $\mu$ extends to a completely contractive map $K\otimes^{\h} K\to K$. Since $K$ is nondegenerate as an $A$-bimodule, the image under the contraction $\eta$ of a contractive approximate unit for $A$ will be a contractive approximate unit for $K$. A theorem of Blecher, Ruan and Sinclair \cite[Theorem 2.3.2]{BLM} then implies that $K$ is completely isometrically isomorphic to an operator algebra. 
\end{proof}

\begin{lemma}\label{lem:eta-ci}
Suppose that $A$ is a $C^*$-algebra and that $\eta:A\to K$ is injective. Then for each $X\in \opmod(A)$ the natural transformation $\eta_X:X\to X\otimes^{\h}_A K$ is a complete isometry.
\end{lemma}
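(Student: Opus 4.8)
The map $\eta_X$ is, by its very definition, the map $\id_X\otimes\eta$ obtained from $\eta:A\to K$ under the completely isometric identification $X\cong X\otimes^{\h}_A A$ (nondegeneracy). Thus $\eta_X$ is a complete isometry if and only if $\id_X\otimes\eta$ is, and my plan is as follows: first show that $\eta$ is a completely isometric isomorphism of $A$ onto a \emph{closed left-$A$-submodule} of $K$, and then deduce that $\id_X\otimes\eta$ is a complete isometry by invoking the exactness of the Haagerup tensor product (Theorem \ref{thm:exact}).

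The crux of the argument is to promote the mere injectivity of $\eta$ to complete isometry, and it is here that the hypothesis that $A$ is a $C^*$-algebra enters essentially. By Lemma \ref{lem:K}, $\eta:A\to K$ is a completely contractive homomorphism of approximately unital operator algebras. I would extend it, using Meyer's unitisation theorem (cf.\ \cite{BLM}), to a unital completely contractive homomorphism $\eta^{+}:A^{+}\to K^{+}$ of the unitisations, where $A^{+}$ is the unitisation of $A$ and hence (as $A$ is a $C^*$-algebra) a unital $C^*$-algebra. Representing $K^{+}$ unitally and completely isometrically on a Hilbert space $H$, one obtains a unital completely contractive homomorphism $\eta^{+}:A^{+}\to\Bounded(H)$ out of a unital $C^*$-algebra. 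Since a unital completely contractive map on a unital $C^*$-algebra is completely positive, and a completely positive homomorphism is automatically $*$-preserving, the map $\eta^{+}$, and hence its restriction $\eta$, is a $*$-homomorphism. As $\eta$ is injective it is therefore isometric, and the same reasoning applied at each matrix level shows it is completely isometric. In particular $\eta(A)$ is a closed $C^*$-subalgebra of $K$, and since $a\cdot\eta(a')=\eta(a)\eta(a')=\eta(aa')$ it is a closed left-$A$-submodule of $K$, with $\eta:A\to\eta(A)$ a completely isometric isomorphism of left $A$-modules.

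It then remains to factor $\eta$ as the composite $A\xrightarrow{\ \cong\ }\eta(A)\hookrightarrow K$ of left-$A$-module maps and apply the functor $X\otimes^{\h}_A(-)$. The first arrow is a completely isometric isomorphism, so $\id_X\otimes(A\xrightarrow{\cong}\eta(A))$ is again a completely isometric isomorphism by functoriality of the Haagerup tensor product. For the inclusion $\eta(A)\hookrightarrow K$ of a closed submodule, the map $\id_X\otimes(\eta(A)\hookrightarrow K)$ is a complete isometry by Theorem \ref{thm:exact}(a)---applied in the left-module variable, which is legitimate either by the symmetry of the Anantharaman-Delaroche--Pop result or by transporting through the adjoint operation \eqref{eq:tensor-adjoint}. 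Composing, $\eta_X=\id_X\otimes\eta$ is a complete isometry, as required.

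I expect the main obstacle to be precisely the passage from injectivity to complete isometry: this is the only step in which the $C^*$-structure of $A$ is used in an essential way, through the identification of unital completely contractive homomorphisms of $C^*$-algebras with $*$-homomorphisms. Everything else is a formal consequence of the exactness of $\otimes^{\h}_A$.
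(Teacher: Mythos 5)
Your proof is correct and follows essentially the same route as the paper: both arguments reduce the lemma to showing that $\eta$ itself is completely isometric (using the operator-algebra structure on $K$ from Lemma \ref{lem:K} together with the $C^*$-structure of $A$), and then conclude by applying the exactness Theorem \ref{thm:exact} to $\id_X\otimes\eta$. The only difference is one of packaging: where the paper embeds $K$ completely isometrically into a $C^*$-algebra $D$ and cites \cite[A.5.8]{BLM} for the fact that an injective contractive homomorphism between $C^*$-algebras is a complete isometry, you re-derive that fact from scratch (via Meyer's unitisation theorem, the implication unital complete contraction $\Rightarrow$ completely positive $\Rightarrow$ $*$-homomorphism), and you are slightly more explicit than the paper about applying Theorem \ref{thm:exact} in the left-module variable.
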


\begin{proof}
Lemma \ref{lem:K} implies that there exists a completely isometric algebra embedding of $K$ into a $C^*$-algebra $D$; let us fix such an embedding. The map $\eta:A\to K\into D$ is a  contractive, injective algebra homomorphism of $C^*$-algebras, and hence it is a complete isometry (see e.g.~\cite[A.5.8]{BLM}). Now $\eta_X$ is the composition of the completely isometric isomorphism $X\cong X\otimes^{\h}_A A$ with the map $\id_X\otimes \eta$, which is a complete isometry by Theorem \ref{thm:exact}.
\end{proof}

The following Lemma shows that the hypotheses of Theorem \ref{thm:WEP} can be simplified if $A$ has the weak expectation property:

\begin{lemma}\label{lem:WEP-faithful}
Let $({}_A L_B, {}_B R_A)$ be an adjoint pair of operator bimodules, where $A$ is a $C^*$-algebra with the weak expectation property. The following are equivalent:
\begin{enumerate}[\rm(a)]
\item $L$ is faithful as a left $A$-module.
\item $R$ is faithful as a right $A$-module.
\item The unit map $\eta:A\to K$ is injective.
\item There is a completely contractive $A$-bimodule map $\iota:K\to A^{\dual\dual}$ such that $\iota\circ\eta:A\to A^{\dual\dual}$ is the canonical embedding.
\end{enumerate}
\end{lemma}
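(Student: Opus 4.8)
The plan is to prove the equivalences through the cycle (a) $\Leftrightarrow$ (c) $\Leftrightarrow$ (b) together with (d) $\Rightarrow$ (c) $\Rightarrow$ (d), observing along the way that the first three conditions are equivalent for \emph{any} adjoint pair over a $C^*$-algebra $A$, and that the weak expectation property is needed only to produce the map $\iota$ in (d). The implication (d) $\Rightarrow$ (c) is immediate: if $\iota\circ\eta$ equals the canonical embedding $A\into A^{\dual\dual}$, which is injective, then $\eta$ must be injective.

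For (a) $\Leftrightarrow$ (c) I would show that $\ker\eta$ coincides with the kernel of the left action of $A$ on $L$. The right-hand triangle in \eqref{eq:triangle} says exactly that the action map $A\otimes^{\h}_A L\xrightarrow{\cong}L$ factors as $A\otimes^{\h}_A L\xrightarrow{\eta\otimes\id_L}K\otimes^{\h}_A L=L\otimes^{\h}_B C\xrightarrow{\id_L\otimes\epsilon}L$; hence $\eta(a)=0$ forces $aL=0$, giving $\ker\eta\subseteq\ker(\text{action on }L)$. For the reverse inclusion I would first use Lemma \ref{lem:K} to identify the intrinsic left $A$-module structure $a\cdot(\ell\otimes r)=(a\ell)\otimes r$ on $K$ with left multiplication by $\eta(a)$ in the operator algebra $K$ (an unwinding of the product $\mu$ from Lemma \ref{lem:K} combined with the factorisation just obtained). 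Then $aL=0$ implies $a\cdot K=0$, so $\eta(a)\eta(e_i)=a\cdot\eta(e_i)=0$ for a contractive approximate unit $(e_i)$ of $A$; since $(\eta(e_i))$ is an approximate unit for $K$, passing to the limit yields $\eta(a)=0$. Thus $\ker\eta=\ker(\text{action on }L)$, which gives (a) $\Leftrightarrow$ (c). The equivalence (b) $\Leftrightarrow$ (c) follows symmetrically, using the left-hand triangle of \eqref{eq:triangle} and the right $A$-action $(\ell\otimes r)\cdot a=\ell\otimes(ra)$ on $K$.

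It remains to prove (c) $\Rightarrow$ (d), and this is where the weak expectation property enters. Following the proof of Lemma \ref{lem:eta-ci}, I would embed $K$ completely isometrically into a $C^*$-algebra $D$. Under hypothesis (c) the composite $\eta\colon A\to K\into D$ is an injective, completely contractive homomorphism of $C^*$-algebras; as in Lemma \ref{lem:eta-ci} it is then a complete isometry, and being a completely contractive homomorphism out of a $C^*$-algebra it is a $*$-homomorphism, so $\eta(A)$ is a $C^*$-subalgebra of $D$ and we may regard $A\into D$ as an inclusion of $C^*$-algebras. Since $A$ has the weak expectation property, this inclusion admits a weak expectation $\Phi\colon D\to A^{\dual\dual}$, and I would take $\iota\coloneqq\Phi|_K$. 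Then $\iota\circ\eta=\Phi|_A$ is the canonical embedding, and $\iota$ is completely contractive; moreover $\Phi$ is automatically a completely contractive $A$-bimodule map, so $\iota$ is an $A$-bimodule map for the multiplicative bimodule structure on $K$, which we have already identified with the tensor-product structure. This is the required $\iota$.

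The step I expect to be most delicate is the identification of the two $A$-bimodule structures on $K$: the intrinsic one coming from $K=L\otimes^{\h}_B R$, and the one induced by multiplication by $\eta(\cdot)$. This compatibility is what powers both the limiting argument $\eta(a)=\lim_i\eta(a)\eta(e_i)$ in the second paragraph and the verification that $\iota$ is a bimodule map at the end. A smaller point that also requires care is the claim that $\eta$ is genuinely a $*$-homomorphism, so that $\eta(A)$ is a $C^*$-subalgebra of $D$ and the weak expectation property is applicable; this rests on the rigidity of completely contractive homomorphisms of $C^*$-algebras.
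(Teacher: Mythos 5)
Your proof is correct and takes essentially the same approach as the paper: the triangle identities \eqref{eq:triangle} give (a) $\Leftrightarrow$ (c) $\Leftrightarrow$ (b), and (c) $\Rightarrow$ (d) proceeds, exactly as in the paper, by embedding $K$ completely isometrically into a $C^*$-algebra $D$ (Lemmas \ref{lem:K} and \ref{lem:eta-ci}), observing that $\eta:A\to D$ is then an injective $*$-homomorphism, and restricting a weak expectation $D\to A^{\dual\dual}$ to $K$. The small variations---routing the annihilation argument through the multiplication on $K$ and its approximate unit $(\eta(e_i))$ rather than through bimodularity of $\eta$ (i.e.\ $a\eta(a')=\eta(a)a'$) and nondegeneracy of $K$ as a right $A$-module, and making explicit the compatibility of the tensor-product and multiplicative $A$-bimodule structures on $K$, which the paper leaves implicit---are cosmetic refinements of the same argument.
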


\begin{proof}
The commutative diagrams \eqref{eq:triangle} show that the kernel of $\eta$ annihilates the modules $L$ and $R$, and so each of (a) and (b) implies (c). On the other hand, if $a\in A$ annihilates the left $A$-module $L$, then $a$ also annihilates $K=L\otimes^{\h}_B R$ as a left $A$-module, and in particular we have $a\eta(a')=\eta(a)a'=0$ for all $a'\in A$. Since $K$ is nondegenerate as a right $A$-module, we conclude from this last equality that $\eta(a)=0$; thus (c) implies (a). Switching left  and right in the preceding argument shows that (c) implies (b) too. Clearly (d) implies (c). To see that (c) implies (d), choose a completely isometric embedding of $K$ into a $C^*$-algebra $D$ (this is possible by Lemma \ref{lem:K}). Then the composition $A\xrightarrow{\eta} K\to D$ is a completely isometric algebra homomorphism (Lemmas \ref{lem:K} and \ref{lem:eta-ci}), and is therefore an injective $*$-homomorphism (\cite[A.5.8]{BLM} again). Now, since $A$ has the weak expectation property, there is a completely contractive $A$-bimodule map $\iota:D\to A^{\dual\dual}$ whose composition with $\eta:A\to K$ is the canonical embedding.
\end{proof}

To prove Theorem \ref{thm:WEP} we shall exhibit an explicit inverse to $\functor{L}_C$.  The construction is dictated by abstract categorical considerations; see e.g.~\cite[3.2.4]{ttt} for the general construction.

\begin{definition-lemma}\label{lem:RC}
There is a completely contractive functor 
\[
\functor{R}_C : \opcomod^{\cb}(C)\to \opmod(A)
\]
given on objects by 
\[
\functor{R}_C(Z,\delta_Z) = Z\boxtimes_C R   \coloneqq \left \{ \left. \xi \in Z\otimes^{\h}_B R \ \right|\ (\id_Z\otimes\eta_R)(\xi) = (\delta_Z\otimes \id_R)(\xi) \right\}
%\ker\left( Z\otimes^{\h}_B R \xrightarrow{\id_Z\otimes \eta_R -\delta_Z\otimes \id_R} Z \otimes^{\h}_B C\otimes^{\h}_B R\right)
\] 
and on morphisms by $\functor{R}_C(t) \coloneqq (t\otimes \id_R)\restrict_{Z\boxtimes_C R}$. 
\end{definition-lemma}

\begin{proof}
Note firstly that for each $Z\in \opcomod(C)$  the space $Z\boxtimes_C R$ is a closed $A$-submodule of $Z\otimes^{\h}_B R$, and is therefore an operator $A$-module. The fact that $\functor{R}_C$ is a functor---that is, that $t\otimes\id_R$ maps $Z\boxtimes_C R$ into $W\boxtimes_C R$ for each morphism $t\in \CB_C(Z,W)$---is easily checked as in the algebraic case (see e.g.~\cite[1.10]{Brz-Wis}). The functor $\functor{R}_C$ is completely contractive because $\functor{R}$ is completely contractive, and because restriction of completely bounded maps to a closed subspace is likewise completely contractive. 
\end{proof}
 
We shall prove the following more precise version of Theorem \ref{thm:WEP}. To understand the statement, note that that for each $X\in \opmod(A)$ the $A$-module $\functor{R}_C\functor{L}_C (X) = ( X\otimes^{\h}_A L )\boxtimes_C R$ is a closed submodule of $X\otimes^{\h}_A K = X\otimes^{\h}_A L\otimes^{\h}_A R$; while for each $(Z,\delta_Z)\in \opcomod^{\cb}(C)$ the $C$-comodule $\functor{L}_C\functor{R}_C(Z)=(Z\boxtimes_C R)\otimes^{\h}_A L$ can be identified with a closed $B$-submodule of $Z\otimes^{\h}_B C= Z\otimes^{\h}_B R\otimes^{\h}_A L$, via the completely isometric embedding 
\[
 (Z\boxtimes_C R)\otimes^{\h}_A L \xrightarrow{ \textrm{inclusion}\otimes\id_L} Z \otimes^{\h}_B R\otimes^{\h}_A L.
\]

\begin{proposition}\label{prop:exp}
\begin{enumerate}[\rm(a)]
\item   Suppose that $A$ is a $C^*$-algebra, and that $\eta:A\to K$ is injective. For each $X\in \opmod(A)$ the map $\eta_X:X\to X\otimes^{\h}_A K$ restricts to a completely isometric isomorphism of operator $A$-modules $\eta:X\to \functor{R}_C\functor{L}_C(X)$.
\item Suppose that $A$ is a $C^*$-algebra and that there exists a completely contractive $A$-bimodule map $\iota:K\to A^{\dual\dual}$ such that $\iota\circ \eta:A\to A^{\dual\dual}$ is the canonical embedding. Then for each $(Z,\delta_Z)\in \opcomod^{\cb}(C)$ the map $\delta_Z:Z\to Z\otimes^{\h}_B C$ restricts to a completely bounded natural isomorphism of operator $C$-comodules
$
\delta_Z: Z\to \functor{L}_C\functor{R}_C(Z)
$, which is completely isometric if $(Z,\delta_Z)\in \opcomod(C)$. In particular, $\functor{L}_C:\opmod(A)\to \opcomod(C)$ and $\functor{R}_C:\opcomod(C)\to\opmod(A)$ are mutually inverse completely isometric equivalences.
\end{enumerate}
\end{proposition}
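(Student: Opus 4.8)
The plan is to prove parts (a) and (b) separately and then combine them: part (a) will produce a completely isometric natural isomorphism $\eta\colon \id_{\opmod(A)}\to\functor{R}_C\functor{L}_C$, and part (b) a completely isometric natural isomorphism $\delta\colon \id_{\opcomod(C)}\to\functor{L}_C\functor{R}_C$, so that together they exhibit $\functor{L}_C$ and $\functor{R}_C$ as mutually inverse completely isometric equivalences in the sense of Definition~\ref{def:OS1-cats}. Writing $T\coloneqq\otimes^{\h}_A K$ for the monad on $\opmod(A)$ determined by the operator algebra $K$ of Lemma~\ref{lem:K}, the heart of each part is to recognise a canonical map as an equalizer of a parallel pair of coaction-type maps.

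For part (a), I would first unwind the definitions: since $\functor{L}_C(X)=(X\otimes^{\h}_A L,\eta_X\otimes\id_L)$, the submodule $\functor{R}_C\functor{L}_C(X)\subseteq X\otimes^{\h}_A K$ is exactly $\ker(\eta_{TX}-T\eta_X)$, where $\eta_{TX}$ is the unit at $TX$ and $T\eta_X=\eta_X\otimes\id_K$, both mapping $TX\to T^2X$. Naturality of $\eta$ (diagram~\eqref{eq:natural}, applied to the morphism $\eta_X$) gives $\eta_{TX}\circ\eta_X=T\eta_X\circ\eta_X$, so $\eta_X$ lands in this kernel; and $\eta_X$ is a complete isometry by Lemma~\ref{lem:eta-ci}. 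It remains to prove surjectivity onto the kernel. Here I would use the standard fact that applying $T$ to the fork $X\xrightarrow{\eta_X}TX\to T^2X$ yields a \emph{split} equalizer, the splittings coming from the associative multiplication on $K$ (Lemma~\ref{lem:K}) together with the monad unit laws; split equalizers are absolute, so $T$ carries $\functor{R}_C\functor{L}_C(X)$ onto $T(\eta_X(X))$. Comparing this with the exactness of the Haagerup tensor product (Theorem~\ref{thm:exact}) shows that the cokernel $Q\coloneqq\functor{R}_C\functor{L}_C(X)/\eta_X(X)$ satisfies $Q\otimes^{\h}_A K=0$; then a second application of Lemma~\ref{lem:eta-ci}, which embeds $Q$ completely isometrically into $Q\otimes^{\h}_A K$, forces $Q=0$. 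Thus $\eta_X\colon X\to\functor{R}_C\functor{L}_C(X)$ is a surjective complete isometry, hence a completely isometric isomorphism.

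For part (b), I would begin by rewriting $\functor{L}_C\functor{R}_C(Z)$. By Definition/Lemma~\ref{lem:RC}, $\functor{R}_C(Z)=\ker(\phi)$ with $\phi\coloneqq\id_Z\otimes\eta_R-\delta_Z\otimes\id_R\colon Z\otimes^{\h}_B R\to Z\otimes^{\h}_B C\otimes^{\h}_B R$, and $\functor{L}_C\functor{R}_C(Z)=\ker(\phi)\otimes^{\h}_A L$ inside $Z\otimes^{\h}_B C$. The crucial step, and the one place where the hypothesis on $\iota$ enters, is that the functor $\otimes^{\h}_A L$ preserves this kernel, i.e. $\ker(\phi)\otimes^{\h}_A L=\ker(\phi\otimes\id_L)$; this is the substance of Lemma~\ref{lem:WEP-heart}, established by a duality argument, and I expect it to be the main obstacle, since $\otimes^{\h}_A L$ does not preserve arbitrary kernels. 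Granting it, a direct computation using $\eta_R\otimes\id_L=\delta$ (Definition~\ref{def:C}) identifies $\phi\otimes\id_L$ with $\id_Z\otimes\delta-\delta_Z\otimes\id_C$, so that $\functor{L}_C\functor{R}_C(Z)=\ker(\id_Z\otimes\delta-\delta_Z\otimes\id_C)$, the equalizer of the two coactions on $Z\otimes^{\h}_B C$. That $\delta_Z$ maps \emph{onto} this equalizer is then elementary, and—in contrast to part (a)—requires no hypothesis on $\iota$: the comodule axioms~\eqref{eq:comodule-axioms} show that the fork $Z\xrightarrow{\delta_Z}Z\otimes^{\h}_B C\to Z\otimes^{\h}_B C\otimes^{\h}_B C$ is a genuine split equalizer, with splittings $\id_Z\otimes\epsilon$ and $\id_Z\otimes\id_C\otimes\epsilon$ supplied by the counit. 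Hence $\delta_Z(Z)=\functor{L}_C\functor{R}_C(Z)$, and $\delta_Z$ is a bijection onto it; it is a complete embedding in general and a complete isometry when $(Z,\delta_Z)\in\opcomod(C)$ by Remark~\ref{rem:coalgebras-comodules}(b), hence a completely bounded (respectively completely isometric) isomorphism.

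Finally I would assemble the two halves. The maps $\eta_X$ are natural in $X$ by~\eqref{eq:natural}, and the $\delta_Z$ are natural in $Z$ and are $C$-comodule morphisms by coassociativity, so $\eta$ and $\delta$ are completely isometric natural isomorphisms $\id_{\opmod(A)}\cong\functor{R}_C\functor{L}_C$ and $\id_{\opcomod(C)}\cong\functor{L}_C\functor{R}_C$. Since $\functor{L}_C$ and $\functor{R}_C$ are $\opsp_1$-functors (Definition~\ref{def:comodules-comparison} and Definition/Lemma~\ref{lem:RC}), this is exactly the statement that they are mutually inverse completely isometric equivalences. The main obstacle throughout is the kernel-preservation statement for $\otimes^{\h}_A L$; the asymmetry between the two parts—a genuine split equalizer in (b) versus a merely $T$-split one rescued by exactness in (a)—reflects the asymmetry of the Haagerup tensor product noted in the introduction.
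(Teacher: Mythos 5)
Your proof is correct, and it is built from the same ingredients as the paper's own proof---Lemma \ref{lem:eta-ci}, Theorem \ref{thm:exact}, and Lemma \ref{lem:WEP-heart} via Corollary \ref{cor:contractible-equalisers}, assembled into Beck's argument---but part (a) is executed by a genuinely different route. In part (b) you essentially reproduce the paper's proof: the paper applies Lemma \ref{lem:WEP-heart} to $f=\id_Z\otimes\eta_R$, $g=\delta_Z\otimes\id_R$ with $\lambda=\id_Z\otimes\epsilon_C$, and the two identities it must check are exactly the split-equalizer relations you invoke; the one point you should make explicit is that your counit splitting \emph{is} the map $\lambda$ demanded by Lemma \ref{lem:WEP-heart}---``granting'' the kernel-preservation is not quite citing that lemma, since it applies only to pairs admitting such a $\lambda$, and exhibiting it together with the identities $\lambda\circ(f\otimes\id_L)=\id$ and $(h\otimes\id_L)\circ\lambda\circ(g\otimes\id_L)=0$ (which follow from \eqref{eq:comodule-axioms}, the naturality of $\epsilon$, and Definition \ref{def:coalgebra}) is the actual content of the verification. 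In part (a), by contrast, the paper proves surjectivity of $\eta_X$ onto $W\coloneqq\functor{R}_C\functor{L}_C(X)$ by tensoring the sequence $A\xrightarrow{\eta}K\xrightarrow{q}K/A$ with $X$: Theorem \ref{thm:exact} gives $\image(\eta_X)=\ker(\id_X\otimes q)$, and $(\id_X\otimes q)\restrict_W=0$ is deduced by composing with the complete isometry $\id_X\otimes\eta_{K/A}$ and using naturality of $\eta$. Your argument replaces this with the absoluteness of the split fork: writing $T=\otimes^{\h}_A K$ and $h=\eta_{TX}-T\eta_X$, the splittings $\mu_X,\mu_{TX}$ give $\image(T\eta_X)=\ker(Th)$, so one gets the sandwich $T(\eta_X(X))\subseteq TW\subseteq\ker(Th)=T(\eta_X(X))$ inside $T^2X$ (using Theorem \ref{thm:exact}(a) to regard these as submodules), whence $Q\otimes^{\h}_A K=0$ for the quotient $Q=W/\eta_X(X)$ by Theorem \ref{thm:exact}(b),(c), and Lemma \ref{lem:eta-ci} applied to the nondegenerate quotient module $Q$ forces $Q=0$. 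This is valid (the monad unit laws you need do follow from \eqref{eq:triangle} and Lemma \ref{lem:K}), and it is closer to the textbook proof of Beck's theorem---absolute forks plus a conservativity step, with Lemma \ref{lem:eta-ci} playing the role of ``$T$ reflects zero objects''---whereas the paper's version is more hands-on, working with the single fixed bimodule $K/A$ rather than an $X$-dependent cokernel. Neither route is more general than the other: both invoke the $C^*$-hypotheses at exactly the same two points.
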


The technical heart of the proof of Proposition \ref{prop:exp} is contained in the following two lemmas. 

\begin{lemma}\label{lem:sigma}
Assume the hypotheses of Theorem \ref{thm:WEP}. For each $X\in \opmod(A)$ there is a completely bounded map $\sigma_X:X^\dual\to (X\otimes^{\h}_A K)^\dual$ which satisfies $\eta_X^\dual\circ \sigma_X =\id_{X^\dual}$, and which is natural in the sense that for every morphism $t\in \CB_A(X,Y)$ in $\opmod(A)$ one has $\sigma_X\circ t^\dual = (t\otimes \id_K)^\dual\circ \sigma_Y$.
\end{lemma}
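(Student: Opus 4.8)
The plan is to produce $\sigma_X$ by dualising a single completely contractive map $\rho_X\colon X\otimes^{\h}_A K\to X^{\dual\dual}$ that splits $\eta_X$ after passing to biduals. The role of the hypothesis on $\iota$ is precisely to manufacture such a splitting: although $\eta\colon A\to K$ need not admit a left inverse, the map $\iota\colon K\to A^{\dual\dual}$ behaves like a retraction of $\eta$ at the level of the bidual, since $\iota\circ\eta$ is the canonical embedding $A\to A^{\dual\dual}$. I would first upgrade this into a statement about modules, then dualise, and finally read off the section and naturality properties formally from the triangle identities for the canonical embeddings $j_V\colon V\to V^{\dual\dual}$.

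First I would recall the standard fact that, for a right operator $A$-module $X$, the bidual $X^{\dual\dual}$ is a right operator module over the von Neumann algebra $A^{\dual\dual}$, with a completely contractive action extending that of $A$ and compatible with $j_X$ (cf.~\cite{BLM}). Composing $X\hookrightarrow X^{\dual\dual}$ with this action gives a completely contractive, $A$-balanced bilinear map $X\times A^{\dual\dual}\to X^{\dual\dual}$, hence a completely contractive $A$-module map $m\colon X\otimes^{\h}_A A^{\dual\dual}\to X^{\dual\dual}$, $x\otimes\phi\mapsto x\cdot\phi$. Since $\iota$ is a completely contractive left $A$-module map, I can then set
\[
\rho_X\coloneqq m\circ(\id_X\otimes\iota)\colon X\otimes^{\h}_A K\to X^{\dual\dual},\qquad x\otimes k\mapsto x\cdot\iota(k),
\]
which is completely contractive. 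The key computation is that $\rho_X\circ\eta_X=j_X$: expanding $\eta_X=(\id_X\otimes\eta)\circ(X\cong X\otimes^{\h}_A A)$ and using $(\id_X\otimes\iota)\circ(\id_X\otimes\eta)=\id_X\otimes(\iota\circ\eta)$ together with the hypothesis $\iota\circ\eta=j_A$, this reduces to $x\otimes a\mapsto x\cdot j_A(a)=j_X(xa)$ and to the fact that multiplication $X\otimes^{\h}_A A\to X$ inverts the canonical isomorphism. I would also check that $\rho$ is natural, i.e.\ $\rho_Y\circ(t\otimes\id_K)=t^{\dual\dual}\circ\rho_X$ for $t\in\CB_A(X,Y)$; this is immediate from the fact that $t^{\dual\dual}$ is a right $A^{\dual\dual}$-module map restricting to $t$ on $X$.

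With $\rho_X$ in hand I would define $\sigma_X\coloneqq \rho_X^{\dual}\circ j_{X^{\dual}}\colon X^{\dual}\to (X\otimes^{\h}_A K)^{\dual}$. This is completely bounded (indeed completely contractive), since the dual of a completely contractive map is completely contractive and $j_{X^{\dual}}$ is a complete isometry. The section property then follows formally from $\rho_X\circ\eta_X=j_X$, namely
\[
\eta_X^{\dual}\circ\sigma_X=(\rho_X\circ\eta_X)^{\dual}\circ j_{X^{\dual}}=j_X^{\dual}\circ j_{X^{\dual}}=\id_{X^{\dual}},
\]
the last equality being the standard triangle identity for canonical embeddings. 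Naturality of $\sigma$ likewise follows by dualising the naturality of $\rho$ and combining it with the naturality of the transformation $j$: for $t\in\CB_A(X,Y)$ one obtains
\[
\sigma_X\circ t^{\dual}=\rho_X^{\dual}\circ j_{X^{\dual}}\circ t^{\dual}=\rho_X^{\dual}\circ (t^{\dual})^{\dual\dual}\circ j_{Y^{\dual}}=(t\otimes\id_K)^{\dual}\circ\rho_Y^{\dual}\circ j_{Y^{\dual}}=(t\otimes\id_K)^{\dual}\circ\sigma_Y .
\]

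I expect the main obstacle to be the first technical ingredient: setting up the bidual module structure and verifying that the resulting action map $m$ is completely contractive and genuinely extends the $A$-action compatibly with $j_X$. Some care is warranted because, as noted in the introduction, the operator-space dual of an operator module need not itself be an operator module; however, this does not affect the present statement, since $\sigma_X$ is only asserted to be completely bounded (not a module or comodule map), and every module structure I invoke lives on $X$, $K$, $A^{\dual\dual}$ and $X^{\dual\dual}$ rather than on the duals $X^{\dual}$ and $(X\otimes^{\h}_A K)^{\dual}$. Once $m$, and hence $\rho_X$, are correctly in place, the remaining steps are purely formal manipulations with canonical embeddings and dualisation.
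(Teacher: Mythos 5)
Your proposal is correct and follows essentially the same route as the paper: your $\rho_X$ is exactly the paper's map $\tau_X\colon X\otimes^{\h}_A K\to X^{\dual\dual}$, $x\otimes k\mapsto x\cdot\iota(k)$, built from the $A^{\dual\dual}$-module structure on $X^{\dual\dual}$, and your $\sigma_X=\rho_X^{\dual}\circ j_{X^{\dual}}$ coincides with the paper's $\tau_X^{\dual}\restrict_{X^{\dual}}$, with the section and naturality properties obtained by the same duality argument (the paper likewise justifies naturality via the $A^{\dual\dual}$-linearity of $t^{\dual\dual}$, coming from separate weak-$*$ continuity of the bidual action). No gaps; your explicit use of the triangle identity and the naturality of the canonical embeddings just spells out what the paper abbreviates as ``it follows by duality.''
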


\begin{proof}
As explained in \cite[3.8.9]{BLM}, the operator $A$-module structure on $X$ extends to a compatible operator $A^{\dual\dual}$-module structure on the bidual operator space $X^{\dual\dual}$. Consider the completely contractive, right-$A$-linear map 
$\tau_X:X\otimes^{\h}_A K\to X^{\dual\dual}$
defined as the composition
\[
X\otimes^{\h}_A K \xrightarrow{[X\into X^{\dual\dual}] \otimes \iota} X^{\dual\dual} \otimes^{\h}_A A^{\dual\dual} \xrightarrow{\act} X^{\dual\dual}.
\]
Note that for $x\in X$ and $a\in A$ we have 
\[
\tau_X\circ \eta_X(xa)=\tau_X(x\otimes \eta(a)) = x\cdot\iota\circ\eta(a) =xa,
\]
showing that $\tau_X\circ\eta_X$ is the canonical embedding $X\into X^{\dual\dual}$. Dualising $\tau_X$, and restricting to the image of the natural embedding $X^\dual\into X^{\dual\dual\dual}$, we obtain a completely contractive  map 
\[
\sigma_X\coloneqq\tau_X^\dual\restrict_{X^\dual}:X^\dual\to (X\otimes^{\h}_A K)^\dual.
\]
Since $\tau_X\circ \eta_X$ is the embedding $X\into X^{\dual\dual}$, it follows by duality that $\eta_X^\dual\circ\sigma_X=\id_{X^\dual}$. Since $\sigma_X$ and $\eta_X^\dual$ are both completely contractive, this last equality implies that $\sigma_X$ is a complete isometry.

To verify the asserted naturality,  note that if $t\in \CB_A(X,Y)$ then the bidual map $t^{\dual\dual}:X^{\dual\dual}\to Y^{\dual\dual}$ is $A^{\dual\dual}$-linear: this follows  from the separate weak-$*$ continuity of the bidual module action (see \cite[3.8.9]{BLM}). We thus have  for every $k\in K$ and $x\in X$ that
\[
t^{\dual\dual}\circ\tau_X(x\otimes k) = t^{\dual\dual}(x\iota(k))=t(x)\iota(k)=\tau_Y\circ (t\otimes \id_K) (x\otimes k),
\]
from which it follows by duality that $\sigma_X\circ t^\dual = (t\otimes \id_K)^\dual\circ \sigma_Y$ as required.
\end{proof}

\begin{lemma}\label{lem:WEP-heart}
Assume the hypotheses of Theorem \ref{thm:WEP}, and suppose that $f,g:X\to Y$ is a pair of morphisms in $\opmod(A)$ such that there exists a morphism $\lambda: Y\otimes^{\h}_A L \to  X\otimes^{\h}_A L$ in $\opmod(B)$ satisfying $\lambda\circ (f\otimes \id_L)=\id_{X\otimes^{\h}_A L}$ and $(h\otimes \id_L)\circ \lambda \circ (g\otimes \id_L)=0$, where $h\coloneqq f-g$. Then the map $h:X\to Y$ has closed range, and induces a completely bounded isomorphism $X/\ker h \to \image(h)$.
\end{lemma}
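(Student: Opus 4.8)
The plan is to first recognise that the two displayed hypotheses say precisely that $\lambda$ is a \emph{generalised inverse} of $\functor{L}(h)=h\otimes\id_L$. Writing $F=f\otimes\id_L$, $G=g\otimes\id_L$ and $H=h\otimes\id_L=F-G$, the identity $\lambda F=\id$ gives $H\lambda F=H$, while $H\lambda G=0$ by hypothesis; subtracting yields $H\lambda H=H$. Tensoring on the right with $R$ and using the canonical identification $X\otimes^{\h}_A K\cong(X\otimes^{\h}_A L)\otimes^{\h}_B R$, I obtain that $\tilde h\coloneqq h\otimes\id_K=H\otimes\id_R$ has the completely bounded generalised inverse $\tilde\lambda\coloneqq\lambda\otimes\id_R$, that is, $\tilde h\,\tilde\lambda\,\tilde h=\tilde h$. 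The strategy is then to transport this to a generalised inverse of the \emph{operator-space dual} $h^\dual\colon Y^\dual\to X^\dual$, and finally to descend the resulting good behaviour of $h^\dual$ back to $h$ by duality. The reason for passing to duals is that no honest retraction of $\eta_X$ is available at the level of the modules (this is exactly the WEP obstruction), whereas Lemma \ref{lem:sigma} supplies a retraction after dualising.

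For the transport I set
\[
\psi\coloneqq \eta_Y^\dual\circ\tilde\lambda^\dual\circ\sigma_X\colon X^\dual\to Y^\dual,
\]
which is completely bounded since each factor is, and I claim $h^\dual\psi h^\dual=h^\dual$. This follows by a direct computation from four identities: the dual of $\tilde h\,\tilde\lambda\,\tilde h=\tilde h$; the dual of the naturality square \eqref{eq:natural} for $\eta$ (with $t=h$), namely $\eta_X^\dual\tilde h^\dual=h^\dual\eta_Y^\dual$; the naturality of $\sigma$ from Lemma \ref{lem:sigma} (again with $t=h$), namely $\sigma_X h^\dual=\tilde h^\dual\sigma_Y$; and the splitting $\eta_Y^\dual\sigma_Y=\id$. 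Substituting $\psi$ and regrouping gives $h^\dual\psi h^\dual=\eta_X^\dual(\tilde h^\dual\tilde\lambda^\dual\tilde h^\dual)\sigma_Y=\eta_X^\dual\tilde h^\dual\sigma_Y=h^\dual\eta_Y^\dual\sigma_Y=h^\dual$, as claimed. The bookkeeping here is entirely formal once Lemma \ref{lem:sigma} is in hand.

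It remains to descend, and this duality step is where I expect the only real technical care to be needed. Since $h^\dual\psi$ is a completely bounded idempotent on $X^\dual$ with image $\image(h^\dual)$, the range of $h^\dual$ is norm-closed; by the closed-range theorem $\image(h)$ is closed, so the induced map $\bar h\colon X/\ker h\to\image(h)$ is a bounded bijection. Factoring $h=j\circ\bar h\circ\pi$ with $\pi$ the quotient map onto $X/\ker h$ and $j$ the inclusion of $\image(h)$, and dualising (so that $\pi^\dual$ is a complete isometry and $j^\dual$ a complete quotient map), the identity $h^\dual\psi h^\dual=h^\dual$ reduces, after cancelling the injective maps $\pi^\dual$ and $\bar h^\dual$ on the left and invoking the surjectivity of $j^\dual$, to $\beta\,\bar h^\dual=\id$ with $\beta\coloneqq j^\dual\psi\pi^\dual$. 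Thus $\beta=(\bar h^\dual)^{-1}$ is completely bounded; hence $(\bar h^{-1})^\dual=(\bar h^\dual)^{-1}=\beta$ is completely bounded, and since the completely bounded norm is preserved under taking operator-space duals, $\bar h^{-1}$ is completely bounded as well, with $\|\bar h^{-1}\|_{\cb}\leq\|\psi\|_{\cb}\leq\|\lambda\|_{\cb}$. Therefore $\bar h$ is the desired completely bounded isomorphism $X/\ker h\to\image(h)$. The main obstacle, then, is not any single hard estimate but the correct use of operator-space duality (the closed-range theorem together with the subspace/quotient duality for complete isometries and complete quotient maps) to move a generalised inverse from $h^\dual$ back down to $h$.
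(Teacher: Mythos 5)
Your proposal is correct and takes essentially the same route as the paper's proof: you construct the identical dual generalised inverse $\psi=\eta_Y^\dual\circ(\lambda\otimes\id_R)^\dual\circ\sigma_X$ from Lemma \ref{lem:sigma}, verify the same identity $h^\dual\circ\psi\circ h^\dual=h^\dual$ by the same naturality bookkeeping, and then descend to $h$ by closed-range and subspace/quotient duality. The only difference is presentational: the paper packages the final duality step as a commuting diagram comparing $(\widetilde{h})^\dual$ with the induced map $\widetilde{h^\dual}:Y^\dual/\ker(h^\dual)\to\image(h^\dual)$ (citing \cite[1.4.3--1.4.4]{BLM}), whereas you factor $h=j\circ\bar{h}\circ\pi$ and cancel $\pi^\dual$, $\bar{h}^\dual$ against the surjectivity of $j^\dual$ --- the same argument in a slightly different arrangement.
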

 
\begin{proof} 
Consider the map $s:X^\dual\to Y^\dual$ defined as the composition $s \coloneqq \eta_Y^\dual\circ (\lambda\otimes \id_R)^\dual \circ \sigma_X$. 
Noting that the given relations between $f$, $g$, $h$, and $\lambda$ immediately give 
\[
(h\otimes \id_K)\circ (\lambda\otimes \id_R) \circ (h\otimes \id_K) = h\otimes \id_K,
\]
we compute using the naturality properties of $\eta$ and $\sigma$ and find that 
\[
\begin{aligned}
 h^\dual \circ s \circ h^\dual 
 & = h^\dual \circ \eta_Y^\dual\circ (\lambda\otimes \id_R)^\dual \circ \sigma_X \circ h^\dual \\
 & = \eta_X^\dual \circ (h\otimes \id_K)^\dual \circ (\lambda\otimes \id_R)^\dual \circ (h\otimes \id_K)^\dual \otimes \sigma_Y \\
 & = \eta_X^\dual \circ \sigma_X \circ h^\dual = h^\dual.
 \end{aligned}
\]
 
  The rest of the proof is an application of basic duality theory. The claim is that if $X$ and $Y$ are operator spaces, and if $h:X\to Y$ and $s:X^\dual \to Y^\dual$ are completely bounded maps satisfying $h^\dual   s  h^\dual=h^\dual$, then $h$ has closed range and the induced map 
\[
\widetilde{h}:X/\ker(h)\to \image(h),\qquad x+\ker(h)\mapsto h(x)
\]
is an isomorphism. 
The equality $h^\dual   s h^\dual=h^\dual$ implies that $h^\dual  s$ is an idempotent on $X^\dual$ with the same image as $h^\dual$, and so $h^\dual$ has closed range. It now follows from standard Banach space theory that $h$ also has closed range. Moreover, the map
\[
\widetilde{h^\dual}:Y^\dual/\ker(h^\dual) \to \image(h^\dual),\qquad y^\dual+\ker(h^\dual)\mapsto h^\dual(y^\dual)
\]
is a completely bounded isomorphism, with inverse $x^\dual\mapsto \sigma(x^\dual)+\ker(h^\dual)$.

The standard duality relations between subspaces, quotients, images and kernels (cf.~\cite[1.4.4]{BLM}) give a commuting diagram
 \[
\xymatrix@C=40pt{
\image(h)^\dual \ar[r]^-{ (\widetilde{h} )^\dual} \ar[d]_-{\cong} & (X/\ker(h))^\dual \ar[d]^-{\cong} \\
Y^\dual/\ker(h^\dual) \ar[r]^-{\widetilde{h^\dual}} & \image(h^\dual)
}
\]
from which we conclude that $ (\widetilde{h} )^\dual$ is, like $\widetilde{h^\dual}$, an isomorphism. Thus $\widetilde{h}$ is an isomorphism as well (cf.~\cite[1.4.3]{BLM}).
\end{proof}

\begin{corollary}\label{cor:contractible-equalisers}
Assume the hypotheses of Theorem \ref{thm:WEP}. If $f$, $g$ and $h=f-g$ are as in Lemma \ref{lem:WEP-heart}, then the map
\[
\ker(h)\otimes^{\h}_A L \xrightarrow{\operatorname{inclusion}\otimes \id_L} X\otimes^{\h}_A L
\]
is a complete isometry onto $\ker(h\otimes \id_L)$.
\end{corollary}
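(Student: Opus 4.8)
The plan is to read the corollary off from Lemma~\ref{lem:WEP-heart} together with the exactness of the Haagerup tensor product (Theorem~\ref{thm:exact}): the substantive analytic work lies in Lemma~\ref{lem:WEP-heart}, and what remains is to factor $h$ appropriately and track kernels through the functor $\otimes^{\h}_A L$.

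First I would invoke Lemma~\ref{lem:WEP-heart} to conclude that $h=f-g$ has closed range and that the induced map $\widetilde{h}:X/\ker(h)\to\image(h)$ is a completely bounded isomorphism. As $\image(h)$ is then a closed $A$-submodule of $Y$, I can factor $h$ as
\[
X\xrightarrow{\,q\,} X/\ker(h)\xrightarrow{\,\widetilde{h}\,}\image(h)\xrightarrow{\,j\,}Y,
\]
with $q$ the quotient map and $j$ the inclusion. Applying $\otimes^{\h}_A L$ gives $h\otimes\id_L=(j\otimes\id_L)\circ(\widetilde{h}\otimes\id_L)\circ(q\otimes\id_L)$. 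Here $j\otimes\id_L$ is a complete isometry, hence injective, by Theorem~\ref{thm:exact}(a), while $\widetilde{h}\otimes\id_L$ is a completely bounded isomorphism (with inverse $\widetilde{h}^{-1}\otimes\id_L$) because $\otimes^{\h}_A L$ is a functor. Since the two leftmost factors are injective, this yields $\ker(h\otimes\id_L)=\ker(q\otimes\id_L)$. Theorem~\ref{thm:exact}(c), applied to the closed submodule $\ker(h)\subseteq X$, identifies $\ker(q\otimes\id_L)$ with the image of $\mathrm{inclusion}\otimes\id_L$, and Theorem~\ref{thm:exact}(a) shows that this inclusion map is a complete isometry; together these give the assertion.

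I do not expect a serious obstacle beyond what is already handled in Lemma~\ref{lem:WEP-heart}. The one step deserving attention is the identity $\ker(h\otimes\id_L)=\ker(q\otimes\id_L)$: this is precisely where the closed-range and isomorphism conclusions of the lemma are used, allowing $h$ to be factored through a genuine quotient map. As the Remark following Theorem~\ref{thm:exact} emphasises, for a general (non-quotient) morphism $h$ one cannot expect $\ker(h\otimes\id_L)$ to equal $\ker(h)\otimes^{\h}_A L$, so the reduction to the quotient map $q$ is essential.
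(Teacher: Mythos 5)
Your proposal is correct and follows the same route as the paper's own (much terser) proof: use Lemma \ref{lem:WEP-heart} to identify $h$, up to a completely bounded isomorphism and the inclusion of its closed image, with the quotient map $X\to X/\ker(h)$, and then apply parts (a) and (c) of Theorem \ref{thm:exact}. Your write-up simply makes explicit the factorisation $h=(j)\circ(\widetilde{h})\circ(q)$ and the injectivity of the two left factors after tensoring, which the paper leaves implicit.
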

 
\begin{proof}
Lemma \ref{lem:WEP-heart} implies that the restricted map $h:X\to \image(h)$ is conjugate, by a completely bounded isomorphism, to the quotient map $X\to X/\ker h$, and so the corollary follows from Theorem \ref{thm:exact}.
\end{proof}

The proof of Proposition \ref{prop:exp}  follows from Lemma \ref{lem:eta-ci} and Corollary \ref{cor:contractible-equalisers} by an argument due to Beck (cf.~\cite[Section VI.7]{MacLane}, \cite[Theorem 3.3.14]{ttt}, or \cite[Theorem 4.4.4]{Borceux}), which applies in a very general context and whose statement and proof are typically couched in correspondingly general categorical language. For the benefit of the reader having only a slight acquaintance with (or tolerance for) category theory, let us give a self-contained presentation of Beck's argument in the present  setting.   

\begin{proof}[Proof of Proposition \ref{prop:exp}(a)]
The map $\eta_X:X\to X\otimes^{\h}_A K$ is a complete isometry by Lemma \ref{lem:eta-ci}, and its image is contained in $ \functor{R}_C  \functor{L}_C (X)$ by the naturality property \eqref{eq:natural} of $\eta$:
\[
\begin{aligned}
&(\delta_{\functor{L}_C(X)}\otimes \id_R)\circ \eta_X = (\eta_X\otimes \id_K) \circ \eta_X = \eta_{X\otimes^{\h}_A K}\circ \eta_X\\
& = (\id_X\otimes \eta_K)\circ \eta_X = (\id_{\functor{L}_C(X)}\otimes \eta_R)\circ\eta_X.
\end{aligned}
\]

To prove the reverse inclusion $ \functor{R}_C \functor{L}_C (X)\subseteq\image(\eta_X)$, we consider the quotient map $q:K\to K/A$; this is a map of operator $A$-bimodules. Theorem \ref{thm:exact}, modified in the obvious way so that the roles of left and right modules are reversed, implies that the image of $\eta_X=\id_X\otimes \eta$ is equal to the kernel of $\id_X\otimes q$. We have
\[
\begin{aligned}
(\id_X\otimes \eta_{K/A})\circ(\id_X\otimes q)\restrict_{\functor{R}_C\functor{L}_C( X)} & = 
(\id_X\otimes q\otimes \id_K)\circ (\id_X\otimes \eta_K)\restrict_{\functor{R}_C\functor{L}_C (X)}\\
&= (\id_X\otimes q\otimes \id_K)\circ (\eta_X\otimes \id_K)\restrict_{\functor{R}_C\functor{L}_C (X)}\\
&=0,
\end{aligned}
\]
where the first equality holds by the naturality of $\eta$, the second holds by the definition of $\functor{R}_C\functor{L}_C( X)$, and the third holds because $q\circ\eta =0$. Since $\id_X\otimes \eta_{K/A}$ is a complete isometry, by Lemma \ref{lem:eta-ci}, we conclude from the above equality that $\functor{R}_C\functor{L}_C (X)\subseteq \ker (\id_X\otimes q)=\image(\eta_X)$ as required.
\end{proof}

\begin{proof}[Proof of Proposition \ref{prop:exp}(b)]
Consider a comodule $(Z,\delta_Z)\in \opcomod^{\cb}(C)$. The map $\delta_Z$ is natural in $Z$, by virtue of the defining property \eqref{eq:comodule-morphism} of the morphisms in $\opcomod(C)$. We observed in Remarks \ref{rem:coalgebras-comodules} that $\delta_Z$ is a complete embedding, and a complete isometry if $(C,\delta_Z)\in \opcomod(C)$. 

We are going to apply Lemma \ref{lem:WEP-heart} to the $A$-modules $X=Z\otimes^{\h}_B R$ and $Y=Z\otimes^{\h}_B C\otimes^{\h}_B R$, and the maps 
\[
\xymatrix@C=60pt{
 Z\otimes^{\h}_B R  \ar@<.5ex>[r]^-{f = \id_Z\otimes\eta_R} \ar@<-.5ex>[r]_-{g=\delta_Z\otimes\id_R} & Z\otimes^{\h}_B C\otimes^{\h}_B R,
 }
\quad
\xymatrix@C=60pt{Z\otimes^{\h}_B C \otimes^{\h}_B C \ar[r]^-{\lambda = \id_Z\otimes \epsilon_C} & Z\otimes^{\h}_B C } 
\] 
and $h=f-g$. We have
\[
\lambda\circ(f\otimes \id_L) = (\id_Z\otimes \epsilon_C)\circ (\id_Z\otimes \delta) = \id_{Z\otimes^{\h}_B C} 
\]
by the coalgebra axioms (Definition \ref{def:coalgebra}), while
\[
\begin{aligned}
(h\otimes \id_L)\circ\lambda\circ (g\otimes \id_L) & = (\id_Z\otimes \delta - \delta_Z\otimes\id_C)\circ (\id_Z\otimes \epsilon_C) \circ (\delta_Z\otimes \id_C) \\
& = (\id_Z\otimes \delta - \delta_Z\otimes\id_C)\circ \delta_Z \circ \epsilon_Z = 0,
\end{aligned}  
\]
where the second equality holds by virtue of the naturality \eqref{eq:natural} of $\epsilon$, and the third holds by \eqref{eq:comodule-axioms}. Thus Lemma \ref{lem:WEP-heart} and Corollary \ref{cor:contractible-equalisers} apply, giving an equality of $B$-modules
\[
\functor{L}_C\functor{R}_C(Z) \coloneqq \ker(h)\otimes^{\h}_A L = \ker (h\otimes \id_L).
\]

Now the coassociativity property \eqref{eq:comodule-axioms} shows as above that $\image(\delta_Z)\subseteq \ker(h\otimes \id_L)=\functor{L}_C\functor{R}_C(Z)$. The reverse inclusion $\ker(h\otimes \id_L)\subseteq \image(\delta_Z)$ follows from the counitality of $\delta$ and the naturality of $\epsilon$, thus:
\[
\begin{aligned}
\id_{Z\otimes^{\h}_B C} -\delta_Z\circ \epsilon_Z & = (\id_Z\otimes \epsilon_C)\circ(\id_Z\otimes \delta) - (\id_Z\otimes \epsilon_C)\circ (\delta_Z\otimes\id_C) \\
& = (\id_Z\otimes \epsilon_C)\circ (h\otimes\id_L).
\end{aligned}
\]
We   conclude that $\delta_Z$ is a completely bounded (or if $(Z,\delta_Z)\in \opcomod(C)$, completely isometric) natural isomorphism of $B$-modules, from $Z$ to $\functor{L}_C \functor{R}_C(Z)$.

It remains to show that $\delta_Z$ is an isomorphism of $C$-comodules, which is easy: \eqref{eq:comodule-axioms}   ensures that $\delta_Z$ is a map of comodules, and then the usual algebraic argument shows that the inverse $\delta_Z^{-1}$ of the $B$-module isomorphism $\delta_Z$ is a map of comodules too.
\end{proof}

\begin{remark}\label{rem:bimodules}
Fix a third   operator algebra $D$, and consider the $\opsp_1$-categories $\opmod(D,A)$ and $\opmod(D,B)$ of operator $D$-$A$ bimodules and operator $D$-$B$ bimodules, with morphisms the completely bounded bimodule maps. If $({}_A L_B, {}_B R_A)$ is an adjoint pair of operator bimodules, and if $C=R\otimes^{\h}_A L$ is the associated coalgebra, then we consider the $\opsp_1$-category $\opcomod(D,C)$ whose objects are pairs $(Z,\delta_Z)$ consisting of an operator $D$-$B$ bimodule $Z$ and a completely contractive $D$-$B$-bimodule map $\delta_Z:Z\to Z\otimes^{\h}_B C$ making the diagrams \eqref{eq:comodule-axioms} commute; the morphisms in $\opcomod(D,C)$ are the completely bounded $D$-$B$-bimodule maps making \eqref{eq:comodule-morphism} commute. Just as in the case $D=\C$ we have a comparison functor
\begin{equation}\label{eq:bimodule-comparison}
\opmod(D,A)\to \opcomod(D,C),\qquad X\mapsto (X\otimes^{\h}_A L, \eta_X\otimes \id_L),
\end{equation}
and the proof of Theorem \ref{thm:WEP} carries over verbatim to this setting: if $A$ is a $C^*$-algebra and there is a map $\iota:K\to A^{\dual\dual}$ as in Theorem \ref{thm:WEP}, then the functor \eqref{eq:bimodule-comparison} is a completely isometric equivalence.
\end{remark}
  
\section{Descent of Hilbert ${C^*}$-modules}\label{sec:descent-Cmod}

In this section we return to the setting of Example \ref{ex:corresp}: we consider the adjoint pair $({}_A F_B, {}_B F^\adjoint_A, )$ associated to a $C^*$-correspondence ${}_A F_B$ between $C^*$-algebras $A$ and $B$, where $A$ acts on $F$ through a nondegenerate $*$-homomorphism into the $C^*$-algebra $\Compact_B(F)\cong K$ of $B$-compact operators on $F$.  We are going to prove a $C^*$-module version of Theorem \ref{thm:WEP} for this adjoint pair.

We thus consider for each $C^*$-algebra $A$ the category $\cmod(A)$ of right Hilbert $C^*$-modules over $A$, with adjointable $A$-module maps (notation: $\CB_A^*(X,Y)$) as morphisms. We refer to \cite{Lance} or \cite[Chapter 8]{BLM} for background on Hilbert $C^*$-modules; see in particular \cite[8.2.1]{BLM} for the canonical operator-module structure on a Hilbert $C^*$-module, and \cite[8.2.11]{BLM} for the completely isometric identification between the Haagerup tensor product and the Hilbert $C^*$-module tensor product. 

The operator $B$-coalgebra $C=F^\adjoint\otimes^{\h}_A F$ associated to the adjoint pair $( F, F^\adjoint)$ carries a completely isometric, conjugate-linear involution 
\[
*:C\to C, \qquad (f_1^\adjoint\otimes f_2)^*\coloneqq f_2^\adjoint\otimes f_1.
\]
This involution satisfies $(b_1 c b_2)^* = b_2^* c^* b_1^*$ for all $b_1,b_2\in B$ and $c\in C$. The involution   is also compatible with the coalgebra structure: indeed, we have a second completely isometric, conjugate-linear involution
\[
*:C\otimes^{\h}_B C\to C\otimes^{\h}_B C,\qquad (c_1\otimes c_2)^* \coloneqq c_2^*\otimes c_1^*,
\]
and one has $\delta(c)^*=\delta(c^*)$ and $\epsilon(c)^*=\epsilon(c^*)$ for all $c\in C$.

\begin{definition}\label{def:star-comodule}
A  \emph{Hilbert $C^*$-comodule} over $C$ is a right Hilbert $C^*$-module $Z$ over $B$,  equipped with a completely contractive $B$-linear map $\delta_Z:Z\to Z\otimes^{\h}_B C$ making \eqref{eq:comodule-axioms} commute, such that  
\begin{equation}\label{eq:coaction-hermitian}
 \langle z_1 | \delta_Z(z_2) \rangle_C = \langle z_2 | \delta_Z(z_1)\rangle_C^* 
\end{equation}
for all $z_1,z_2\in Z$. Here we are applying the pairing 
\[
  \langle\cdot |\cdot\rangle_C: Z\times (Z\otimes^{\h}_B C) \to C,\qquad \langle z_1 | z_2\otimes c\rangle_C\coloneqq \langle z_1|z_2\rangle c.
\]
A morphism of Hilbert $C^*$-comodules $(Z,\delta_Z)\to (W,\delta_W)$ is an adjointable map of Hilbert $C^*$-modules $t:Z\to W$ making \eqref{eq:comodule-morphism} commute. We denote by $\ccomod(C)$ the category of right Hilbert $C^*$-comodules over $C$, and by $\CB_C^*(Z,W)$ the space of morphisms in $\ccomod(C)$.
\end{definition}

We shall also  consider the category $\ccomod^{\cb}(C)$ of \emph{completely bounded} Hilbert $C^*$-comodules: that is, pairs $(Z,\delta_Z)$ which are as in Definition \ref{def:star-comodule}, except that the coaction $\delta_Z$ is allowed to be merely  completely bounded, rather than completely contractive. We will later show that in the situations of interest to us, $\delta_Z$ is in fact automatically completely contractive: see Corollary \ref{cor:coaction-cb-cc}.

\begin{lemma}\label{lem:Cstar-cat}
If $t:(Z,\delta_Z)\to (W,\delta_W)$ is a morphism in $\ccomod^{\cb}(C)$, then the adjoint map $t^*:W\to Z$ is also a morphism in $\ccomod^{\cb}(C)$.
\end{lemma}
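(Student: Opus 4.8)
The plan is to check the two defining conditions for $t^*$ to be a morphism in $\ccomod^{\cb}(C)$. Adjointability is immediate, since $(t^*)^*=t$; the real content is the commutativity of the square \eqref{eq:comodule-morphism} for $t^*$, that is, the identity $\delta_Z\circ t^*=(t^*\otimes\id_C)\circ\delta_W$. I would prove this by testing both sides against the pairing $\langle\cdot|\cdot\rangle_C$ and invoking the Hermitian axiom \eqref{eq:coaction-hermitian} for the two objects $Z$ and $W$.

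The two ingredients I would set up first are an \emph{adjoint relation} for the pairing and a \emph{separation} (nondegeneracy) statement. For the adjoint relation, note that for an adjointable map $s$ between Hilbert $C^*$-$B$-modules and any element $u$ of the relevant Haagerup tensor product one has $\langle s^*w | u\rangle_C=\langle w|(s\otimes\id_C)u\rangle_C$ and $\langle sz|u\rangle_C=\langle z|(s^*\otimes\id_C)u\rangle_C$; both are verified on elementary tensors $u=z'\otimes c$ using $\langle s^*w|z'\rangle=\langle w|sz'\rangle$ and then extended by linearity and continuity. For separation I would show that if $u\in Z\otimes^{\h}_B C$ satisfies $\langle z|u\rangle_C=0$ for all $z\in Z$, then $u=0$: taking an approximate unit $(u_\alpha)$ for $\Compact_B(Z)$ of the form $u_\alpha=\sum_i |z_i^\alpha\rangle\langle z_i^\alpha|$, one has $(u_\alpha\otimes\id_C)u=\sum_i(|z_i^\alpha\rangle\otimes\id_C)\langle z_i^\alpha|u\rangle_C$, which vanishes under the hypothesis; since $Z\otimes^{\h}_B C$ is a nondegenerate left $\Compact_B(Z)$-module, $(u_\alpha\otimes\id_C)u\to u$, forcing $u=0$.

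With these in hand the main computation is a short chain. For $z\in Z$ and $w\in W$,
\begin{align*}
\langle z | \delta_Z(t^*w)\rangle_C
&= \langle t^*w | \delta_Z(z)\rangle_C^{*}
= \langle w | (t\otimes\id_C)\delta_Z(z)\rangle_C^{*} \\
&= \langle w | \delta_W(tz)\rangle_C^{*}
= \langle tz | \delta_W(w)\rangle_C
= \langle z | (t^*\otimes\id_C)\delta_W(w)\rangle_C,
\end{align*}
where the first equality is \eqref{eq:coaction-hermitian} for $Z$, the second is the adjoint relation, the third is the morphism identity $(t\otimes\id_C)\circ\delta_Z=\delta_W\circ t$, the fourth combines \eqref{eq:coaction-hermitian} for $W$ with the fact that the involution on $C$ squares to the identity, and the last is the adjoint relation again. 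Since this holds for all $z\in Z$, the separation statement yields $\delta_Z(t^*w)=(t^*\otimes\id_C)\delta_W(w)$ for every $w$, which is exactly the commutativity of \eqref{eq:comodule-morphism} for $t^*$.

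I expect the only genuinely nontrivial point to be the separation statement; the adjoint relation and the displayed chain are formal consequences of the definitions, but the approximate-unit argument is where one must be careful that the $\Compact_B(Z)$-action on $Z\otimes^{\h}_B C$ behaves as expected and that the pairing is nondegenerate in its second variable.
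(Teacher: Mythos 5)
Your proof is correct and takes essentially the same route as the paper: the identical five-step chain of equalities (justified by the Hermitian axiom \eqref{eq:coaction-hermitian} for $\delta_Z$ and $\delta_W$, the adjointness of $t$ and $t^*$, and the comodule-morphism property of $t$), followed by nondegeneracy of the pairing $\langle\,\cdot\,|\,\cdot\,\rangle_C$ in its second variable. The only difference is that the paper simply cites Blecher's theorem for the fact that the maps $\langle z|\,\cdot\,\rangle_C$ separate the points of $Z\otimes^{\h}_B C$, whereas you supply a correct direct proof of that separation via an approximate unit of finite-rank operators in $\Compact_B(Z)$.
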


\begin{proof}
For each $z\in Z$ and $w\in W$ we have 
\[
\begin{aligned}
& \langle z | \delta_Z(t^*w)\rangle_C = \langle t^*w | \delta_Z(z)\rangle^*_C = \langle w | (t\otimes\id_C)\delta_Z(z)\rangle^*_C \\
&= \langle w | \delta_W(tz)\rangle^*_C = \langle tz | \delta_W(w)\rangle_C = \langle z | (t^*\otimes\id_C)\delta_W(w)\rangle_C,
\end{aligned}
\]
where the first and the fourth equalities hold because of the Hermitian property \eqref{eq:coaction-hermitian} of $\delta_Z$ and $\delta_W$; the second and the fifth equalities hold because $t$ and $t^*$ are adjoints of one another; and the third equality holds because $t$ was assumed to be a comodule map. 

Now, Blecher has shown (\cite[Theorem 3.10]{Blecher-newapproach}, cf.~\cite[Corollary 8.2.15]{BLM}) that as $z$ ranges over $Z$, the maps 
\[
\langle z|: Z\otimes^{\h}_B C \to C,\qquad \xi \mapsto \langle z|\xi\rangle_C 
\]
separate the points of $Z\otimes^{\h}_B C$. It follows from this, and from the above computation, that $\delta_Z(t^*w) = ( t^*\otimes \id_C)\delta_W(w)$ for all $w\in W$, and so $t^*$ is indeed a map of comodules.
\end{proof}

The category $\cmod(B)$ of Hilbert $C^*$-modules over $B$ is a \emph{$C^*$-category}: each of its morphism spaces $\CB^*_B(X,Y)$ carries a Banach-space norm and a  conjugate-linear map $*:\CB^*_B(X,Y)\to \CB^*_B(Y,X)$ satisfying the natural `multi-object' analogues of the axioms of a $C^*$-algebra; and for each $t\in \CB^*_B(X,Y)$ we have $t^*t\geq 0$ in the $C^*$-algebra $\CB^*_B(X,X)$. See \cite{GLR} for more on the notion of a $C^*$-category. The morphism spaces $\CB_C^*(X,Y)$ in $\ccomod^{\cb}(C)$ are obviously closed subspaces of the morphism spaces $\CB_B^*(X,Y)$ in $\cmod(B)$, and Lemma \ref{lem:Cstar-cat} shows that the involution $*$ on $\CB_B^*(\argument,\argument)$ restricts to an involution on $\CB_C^*(\argument,\argument)$. 
Thus: 

\begin{corollary}\label{cor:Cstar-cat}
The categories $\ccomod(C)$ and $\ccomod^{\cb}(C)$  
are $C^*$-categories.
\hfill\qed
\end{corollary}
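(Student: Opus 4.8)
The plan is to view $\ccomod^{\cb}(C)$ and $\ccomod(C)$ as subcategories of the $C^*$-category $\cmod(B)$ — obtained by decorating the objects with coactions and restricting to comodule morphisms — and to observe that their morphism spaces are norm-closed and $*$-closed subspaces of those of $\cmod(B)$, so that every axiom of a $C^*$-category (in the sense of \cite{GLR}) descends automatically. Since $\ccomod(C)$ is a \emph{full} subcategory of $\ccomod^{\cb}(C)$ — the two have identical morphism spaces $\CB_C^*(Z,W)$ between any pair of objects common to both — it suffices to treat $\ccomod^{\cb}(C)$, the statement for $\ccomod(C)$ then being immediate.

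First I would record the elementary categorical bookkeeping showing that $\ccomod^{\cb}(C)$ really is a subcategory of $\cmod(B)$. Composition of comodule maps is again a comodule map: for $t\in\CB_C^*(Z,W)$ and $s\in\CB_C^*(W,V)$, applying \eqref{eq:comodule-morphism} first to $t$ and then to $s$ gives $\delta_V\circ(s\circ t)=(s\otimes\id_C)\circ\delta_W\circ t=((s\circ t)\otimes\id_C)\circ\delta_Z$, so $s\circ t\in\CB_C^*(Z,V)$; and $\id_Z$ is visibly a comodule map. Thus the morphism spaces $\CB_C^*(Z,W)$ are closed under composition and contain identities, and (as noted just before the statement) each is a closed subspace of the Banach space $\CB_B^*(Z,W)$, hence a Banach space.

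Next I would verify the remaining axioms. The involution of $\cmod(B)$ restricts to $\ccomod^{\cb}(C)$ by Lemma \ref{lem:Cstar-cat}, and it stays conjugate-linear, contravariant and involutive because these are inherited from $\cmod(B)$; likewise submultiplicativity of composition and the $C^*$-identity $\|t^*t\|=\|t\|^2$ hold on $\CB_C^*(Z,W)$ because its norm and involution are the restrictions of those on $\CB_B^*(Z,W)$. The one axiom deserving a word is positivity. For a fixed object $Z$, the endomorphism space $D\coloneqq\CB_C^*(Z,Z)$ is, by the above, a norm-closed, $*$-closed, unital (it contains $\id_Z$) subalgebra of the $C^*$-algebra $\CB_B^*(Z,Z)$, hence is itself a $C^*$-algebra. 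For any $t\in\CB_C^*(Z,W)$ the element $t^*t$ lies in $D$ (composition keeps us inside the category) and is positive in $\CB_B^*(Z,Z)$; by spectral permanence for $C^*$-subalgebras its spectrum is unchanged on passing to $D$, so $t^*t\ge 0$ in $D$ as well. This establishes the positivity axiom and completes the verification.

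The genuine work has already been done in Lemma \ref{lem:Cstar-cat}, which supplies the $*$-closure of the morphism spaces; granting that, the corollary is pure bookkeeping together with the standard spectral-permanence observation, and I expect no real obstacle. The only mild point to flag is that objects of $\ccomod^{\cb}(C)$ carry coactions that are merely completely bounded rather than completely contractive, but this restriction concerns the objects alone and plays no role at the level of morphisms, so the $C^*$-category axioms are verified uniformly for $\ccomod^{\cb}(C)$ and hence for its full subcategory $\ccomod(C)$.
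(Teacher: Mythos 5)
Your proposal is correct and follows essentially the same route as the paper: the morphism spaces $\CB_C^*(Z,W)$ are norm-closed subspaces of the $\CB_B^*(Z,W)$, Lemma \ref{lem:Cstar-cat} gives closure under the involution, and all remaining $C^*$-category axioms (including positivity, via spectral permanence for $C^*$-subalgebras) are inherited from the ambient $C^*$-category $\cmod(B)$. The paper leaves these inheritance checks implicit; your write-up merely makes them explicit, including the observation that $\ccomod(C)$ is a full subcategory of $\ccomod^{\cb}(C)$, so no substantive difference.
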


For each right Hilbert $C^*$-module $X$ over $A$, the Haagerup tensor product $\functor{F}(X)\coloneqq X\otimes^{\h}_A F$ is a right Hilbert $C^*$-module over $B$, with inner product
\[
\langle x_1\otimes f_1 | x_2\otimes f_2 \rangle = \big\langle f_1\, \big|\, \langle x_1|x_2\rangle f_2 \big\rangle.
\]
We consider, as in Example \ref{ex:comodule}, the completely contractive coaction 
\[
\delta_{\functor{F}(X)}\coloneqq \eta_X\otimes \id_F: X\otimes^{\h}_A F\to X\otimes^{\h}_A F\otimes^{\h}_B C.
\]
 
\begin{lemma} For each Hilbert $C^*$-module $X$ over $A$, the pair $(\functor{F}(X),\delta_{\functor{F}(X)})$ is a Hilbert $C^*$-comodule over $C$.
\end{lemma}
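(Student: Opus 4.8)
The plan is as follows. By Example~\ref{ex:comodule}(b), the pair $(\functor{F}(X),\delta_{\functor{F}(X)})$ is already an operator $C$-comodule, so the diagrams \eqref{eq:comodule-axioms} commute and $\delta_{\functor{F}(X)}$ is a completely contractive $B$-module map; and $\functor{F}(X)=X\otimes^{\h}_A F$ is the Hilbert $C^*$-module over $B$ described just above. The \emph{only} remaining point is therefore the Hermitian identity \eqref{eq:coaction-hermitian}. Both sides of that identity are separately continuous and (conjugate-)linear in $z_1,z_2$, and the simple tensors $x\otimes f$ span a dense subspace of $X\otimes^{\h}_A F$, so it suffices to check \eqref{eq:coaction-hermitian} when $z_i=x_i\otimes f_i$.

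First I would record a clean closed form for the pairing on $F$ itself, namely
\[
\langle \xi | \delta_F(\zeta)\rangle_C = \xi^\adjoint\otimes \zeta \in C=F^\adjoint\otimes^{\h}_A F \qquad(\xi,\zeta\in F).
\]
To prove this, write $\zeta=a g$ (using nondegeneracy of the left $A$-action) and unfold the definition of $\delta_F$ from Example~\ref{ex:comodule}(a): under Blecher's identification $K=F\otimes^{\h}_B F^\adjoint\cong\Compact_B(F)$ of Example~\ref{ex:corresp}, if $\eta(a)$ is represented by a sum of rank-one operators $\eta(a)=\sum_i \theta_{p_i,q_i}$ (where $\theta_{p,q}\colon\xi\mapsto p\langle q|\xi\rangle$), then $\delta_F(\zeta)=\sum_i p_i\otimes(q_i^\adjoint\otimes g)$. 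Pairing with $\xi$ and moving each scalar $\langle\xi|p_i\rangle\in B$ through the left $B$-action $b\cdot q_i^\adjoint=(q_i b^*)^\adjoint$ on $F^\adjoint$ produces $\bigl(\sum_i q_i\langle p_i|\xi\rangle\bigr)^\adjoint\otimes g$. The decisive step is the identity $\sum_i q_i\langle p_i|\xi\rangle=\sum_i\theta_{q_i,p_i}(\xi)=\eta(a)^*\xi=\eta(a^*)\xi=a^*\xi$, valid because the rank-one operators transpose under $*$ and $\eta$ is a $*$-homomorphism; this is precisely where the compactness of the $A$-action is used. Since $(a^*\xi)^\adjoint\otimes g=(\xi^\adjoint a)\otimes g=\xi^\adjoint\otimes(ag)=\xi^\adjoint\otimes\zeta$, the answer is independent of the factorization, giving the displayed formula.

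Feeding this into the general pairing, for $z_i=x_i\otimes f_i$ I expand $\delta_{\functor{F}(X)}(z_2)=x_2\otimes\delta_F(f_2)$ and use the inner product on $X\otimes^{\h}_A F$ to reduce to the $F$-level pairing with $\xi=\langle x_2|x_1\rangle f_1$, obtaining
\[
\langle z_1| \delta_{\functor{F}(X)}(z_2)\rangle_C = f_1^\adjoint\otimes\langle x_1|x_2\rangle f_2.
\]
Applying the involution $(u^\adjoint\otimes v)^*=v^\adjoint\otimes u$ on $C$ to $\langle z_2|\delta_{\functor{F}(X)}(z_1)\rangle_C=f_2^\adjoint\otimes\langle x_2|x_1\rangle f_1$, and carrying $\langle x_2|x_1\rangle^*=\langle x_1|x_2\rangle$ across the balanced tensor, returns exactly $f_1^\adjoint\otimes\langle x_1|x_2\rangle f_2$; this is \eqref{eq:coaction-hermitian}.

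The main obstacle is the first displayed formula: extracting $\langle\xi|\delta_F(\zeta)\rangle_C=\xi^\adjoint\otimes\zeta$ from the non-canonical data (the factorization $\zeta=ag$ and the rank-one decomposition of $\eta(a)$). Everything turns on recognising the transposed sum as $\eta(a^*)\xi$, which is simultaneously where the $*$-homomorphism structure and the compactness hypothesis enter, and what forces the result to be independent of the choices made; once it is available, the Hermitian symmetry is a short computation with the module-balancing relations and the involution on $C$.
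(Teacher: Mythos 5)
Your proof is correct and takes essentially the same route as the paper's: both reduce the Hermitian identity \eqref{eq:coaction-hermitian} to elementary tensors and then evaluate the pairing by a rank-one computation in $\Compact_B(F)$, the decisive step in each case being that $\eta(a)^*=\eta(a^*)$ because $\eta$ is a $*$-homomorphism into the compacts. The only difference is organisational: you package the computation as the $F$-level formula $\langle\xi\,|\,\delta_F(\zeta)\rangle_C=\xi^\adjoint\otimes\zeta$ and reduce the $X$-level pairing to it, whereas the paper proves the equivalent $X$-level identity $\langle x_1\otimes f_1\,|\,x_2\otimes k\otimes f_2\rangle_C=\left(k^*\langle x_2|x_1\rangle f_1\right)^\adjoint\otimes f_2$ for general $k\in\Compact_B(F)$ and then specialises to $k=\eta(a_2)$.
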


\begin{proof}
We must show that the coaction $\delta_{\functor{F}(X)}$ satisfies the Hermitian property \eqref{eq:coaction-hermitian}; i.e., that 
\[
\langle x_1\otimes a_1 f_1 | x_2\otimes \eta(a_2)\otimes f_2\rangle_C = \langle x_2\otimes a_2f_2 | x_1\otimes \eta(a_1)\otimes f_1\rangle_C^*
\] 
for all $f_1,f_2\in F$, $a_1,a_2\in A$ and $x_1,x_2\in X$.

We claim that for each $k\in \Compact_B(F)\cong F\otimes^{\h}_B F^\adjoint$   one has
\[
\langle x_1\otimes f_1 | x_2\otimes k\otimes f_2\rangle_C = \left( k^* \langle x_2|x_1\rangle f_1 \right)^\adjoint \otimes f_2.
\]
Indeed, it suffices to check this   for a `rank-one' operator $k=f_2\otimes f_3^\adjoint$,   which is done by a straightforward computation. It follows from this that 
\[
\begin{aligned}
& \langle x_1\otimes a_1 f_1 | x_2\otimes \eta(a_2)\otimes f_2\rangle_C 
= \left( \eta(a_2^*) \langle x_2|x_1\rangle a_1f_1 \right)^\adjoint\otimes f_2 \\
& = \left( \langle x_2a_2 | x_1 a_1\rangle f_1\right)^\adjoint \otimes f_2  
= f_1^\adjoint \otimes \langle x_1 a_1 | x_2 a_2\rangle f_2 \\
& = \left( \left( \eta(a_1^*) \langle x_1| x_2\rangle a_2 f_2\right)^\adjoint \otimes f_1 \right)^* = \langle x_2\otimes a_2f_2 | x_1\otimes \eta(a_1)\otimes f_1\rangle_C^*
\end{aligned}
\]
as required.
\end{proof}

Setting $\functor{F}_C(t)\coloneqq t\otimes \id_C$ for each morphism $t\in \cmod(A)$, we obtain a $*$-functor 
\[
\functor{F}_C:\cmod(A)\to \ccomod(C).
\]
We shall prove the following as a corollary to Theorem \ref{thm:WEP}:

\begin{theorem}\label{thm:Hermitian}
Let ${}_A F_B$ be a $C^*$-correspondence between $C^*$-algebras $A$ and $B$. Assume that $A$ acts faithfully by $B$-compact operators on $F$, and that the inclusion $\eta:A\to \Compact_B(F)$ admits a weak expectation $\Compact_B(F)\to A^{\dual\dual}$. Then the  functor 
\[
\functor{F}_C:\cmod(A)\to \ccomod(C),\qquad X\mapsto (X\otimes^{\h}_A F, \eta_X \otimes \id_F)
\]
is a unitary equivalence of $C^*$-categories, from the  category of right Hilbert $C^*$-modules over $A$   to the  category of right Hilbert $C^*$-comodules over $C$.   
\end{theorem}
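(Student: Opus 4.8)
The plan is to deduce Theorem \ref{thm:Hermitian} from Theorem \ref{thm:WEP} by showing that the general-setting comparison functor $\functor{L}_C$, when specialized to $L=F$, restricts to a functor between the $C^*$-subcategories $\cmod(A)\hookrightarrow\opmod(A)$ and $\ccomod(C)\hookrightarrow\opcomod(C)$, and that this restriction is a unitary equivalence of $C^*$-categories. First I would invoke Lemma \ref{lem:WEP-faithful}: the hypothesis that $A$ acts faithfully on $F$ means $F$ is faithful as a left $A$-module, and since $A$ has a weak expectation for $\eta:A\to\Compact_B(F)\cong K$, the two conditions of Theorem \ref{thm:WEP} are met, so $\functor{L}_C:\opmod(A)\to\opcomod(C)$ is a completely isometric equivalence with explicit inverse $\functor{R}_C$ from Proposition \ref{prop:exp}.

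The next step is to verify that the completely isometric equivalence $\functor{L}_C$ and its inverse $\functor{R}_C$ respect the $C^*$-structure, i.e.\ are compatible with adjoints. On objects this is automatic: I have already shown (in the lemma preceding the theorem) that $\functor{F}(X)=X\otimes^{\h}_A F$ is a genuine Hilbert $C^*$-comodule, so $\functor{F}_C$ does land in $\ccomod(C)$; conversely I must check that $\functor{R}_C(Z,\delta_Z)=Z\boxtimes_C F$ is a Hilbert $C^*$-module over $A$ whenever $(Z,\delta_Z)\in\ccomod(C)$. The key point for morphisms is that the equivalence intertwines the involutions: for an adjointable $t:X\to Y$ in $\cmod(A)$ one has $\functor{F}_C(t)^*=(t\otimes\id_F)^*=t^*\otimes\id_F=\functor{F}_C(t^*)$, using that $\otimes^{\h}_A F$ on Hilbert $C^*$-modules agrees with the Hilbert-module tensor product (\cite[8.2.11]{BLM}), under which $t\otimes\id_F$ is adjointable with adjoint $t^*\otimes\id_F$. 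Thus $\functor{F}_C$ is a fully faithful $*$-functor onto its image. Combined with Corollary \ref{cor:Cstar-cat} (that $\ccomod(C)$ is a $C^*$-category) and Lemma \ref{lem:Cstar-cat} (that the involution on $\opcomod^{\cb}(C)$ restricts to comodule morphisms), a completely isometric, adjoint-preserving, essentially surjective fully faithful functor between $C^*$-categories is automatically a unitary equivalence.

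The main obstacle, and the step requiring genuine work, is \textbf{essential surjectivity}: I must show every Hilbert $C^*$-comodule $(Z,\delta_Z)\in\ccomod(C)$ is unitarily isomorphic to one of the form $\functor{F}_C(X)$. By Proposition \ref{prop:exp}(b) I know that, as an \emph{operator} comodule, $(Z,\delta_Z)\cong\functor{L}_C\functor{R}_C(Z)=\functor{L}_C(Z\boxtimes_C F)$ via the completely isometric natural isomorphism $\delta_Z$. What remains is to promote $X\coloneqq Z\boxtimes_C F$ to a genuine Hilbert $C^*$-module over $A$ and to check that the isomorphism $\delta_Z$ is in fact \emph{unitary}, not merely completely isometric. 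For the first I would use the Hermitian condition \eqref{eq:coaction-hermitian} on $\delta_Z$ to define an $A$-valued inner product on $Z\boxtimes_C F$ and verify completeness, exploiting that $Z\boxtimes_C F$ is a closed submodule of $Z\otimes^{\h}_B F^\adjoint$ and that the $*$-structure on $C$ pairs well with the inner products. For the second, since $\delta_Z$ is already a surjective complete isometry of Hilbert $C^*$-modules and such maps that are adjointable are unitary, I would check adjointability of $\delta_Z$ directly, or equivalently verify that $\delta_Z$ preserves the $B$-valued inner products using \eqref{eq:coaction-hermitian}; this is precisely where the Hermitian hypothesis on comodules, rather than a bare operator coaction, earns its keep. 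I would also here invoke (or prove) Corollary \ref{cor:coaction-cb-cc} to ensure that a completely bounded Hermitian coaction is automatically completely contractive, so that the distinction between $\ccomod(C)$ and $\ccomod^{\cb}(C)$ collapses in this $C^*$-setting.
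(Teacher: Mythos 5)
Your proposal is correct and follows essentially the same route as the paper: the paper likewise specializes Proposition \ref{prop:exp} via Lemma \ref{lem:WEP-faithful}, carries out exactly the step you flag as the real work---showing that the Hermitian condition \eqref{eq:coaction-hermitian} forces the $K$-valued inner product on $Z\boxtimes_C F^\adjoint$ (note: $F^\adjoint$, not $F$, in your notation) to take values in $\eta(A)$, hence giving a Hilbert $C^*$-module over $A$ (its Lemma \ref{lem:boxtimes-ip})---and checks $*$-functoriality as in its Lemmas \ref{lem:Cstar-cat} and \ref{lem:FCstar}. The one shortcut you miss is at the final step: instead of verifying adjointability or inner-product preservation of $\delta_Z$ by hand, the paper invokes Lance's theorem (cf.~\cite[8.1.8]{BLM}) that a surjective isometric module map between Hilbert $C^*$-modules is automatically unitary, which also makes your appeal to Corollary \ref{cor:coaction-cb-cc} unnecessary (in the paper that corollary is deduced \emph{from} this machinery, not used in it).
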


\begin{remark}\label{rem:imageCM}
We note, as in Remark \ref{rem:imageOM}(c), that Theorem \ref{thm:Hermitian} gives a description of the image of the functor $\functor{F}$: a Hilbert $C^*$-module $Y$ over $B$ is unitarily isomorphic to one of the form $X\otimes_A F$ for some Hilbert $C^*$-$A$-module $X$, if and only if $Y$ can be equipped with the structure of a Hilbert $C^*$-comodule over $C$. Similar considerations apply to morphisms.
\end{remark}
  
To construct an inverse to $\functor{F}_C$ we consider, for each (completely bounded) Hilbert $C^*$-comodule $(Z,\delta_Z)\in \ccomod^{\cb}(C)$, the right operator $A$-module $Z\boxtimes_C F^\adjoint$ defined as in Definition/Lemma \ref{lem:RC}:
\[
Z\boxtimes_C F^\adjoint \coloneqq \left\{\left. \xi\in Z\otimes^{\h}_B F^\adjoint \ \right|\ (\id_Z\otimes \eta_{F^\adjoint})(\xi) = (\delta_Z\otimes \id_{F^\adjoint})(\xi)\right\}.
% \ker \left( Z\otimes^{\h}_B F^\adjoint \xrightarrow{ \id_Z\otimes \eta_{F^\adjoint} - \delta_Z\otimes \id_{F^\adjoint}} Z\otimes^{\h}_B C \otimes^{\h}_B F^\adjoint \right).
\]
The module $F^\adjoint$ is right Hilbert $C^*$-module over the $C^*$-algebra $K=\Compact_B(F)\cong F\otimes^{\h}_B F^\adjoint$ of $B$-compact operators on $F$: the $K$-valued inner product is given by 
$\langle f_1^\adjoint | f_2^\adjoint\rangle \coloneqq f_1\otimes f_2^\adjoint$.
The left action of $B$ on $F^\adjoint$ is via a $*$-homomorphism into the $C^*$-algebra of $K$-adjointable operators, and it follows from this that the Haagerup tensor product $Z\otimes^{\h}_B F^\adjoint$ is a right Hilbert $C^*$-module over $K$, with inner product
\begin{equation}\label{eq:K-valued-ip}
\langle z_1\otimes f_1^\adjoint | z_2\otimes f_2^\adjoint\rangle \coloneqq f_1\langle z_1|z_2\rangle \otimes f_2^\adjoint.
\end{equation}

\begin{lemma}\label{lem:boxtimes-ip}
Suppose that the action homomorphism $\eta:A\to K$ is injective. For each Hilbert $C^*$-comodule $(Z,\delta_Z)\in \ccomod^{\cb}(C)$ one has $\langle \xi|\zeta\rangle \in \eta(A)\subseteq K$ for all $\xi,\zeta\in Z\boxtimes_C F^\adjoint$. Consequently, $Z\boxtimes_C F^\adjoint$ is a Hilbert $C^*$-module over $A$, with inner product $\langle \xi|\zeta\rangle_A\coloneqq \eta^{-1}(\langle \xi|\zeta\rangle_K)$.
\end{lemma}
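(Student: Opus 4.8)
The plan is to combine the coinvariance description of $\eta(A)$ coming from Proposition~\ref{prop:exp}(a) with the Hermitian axiom \eqref{eq:coaction-hermitian}. First I would apply Proposition~\ref{prop:exp}(a) to the module $X=A$: here $\functor{L}_C(A)\cong(F,\delta_F)$ and $\functor{R}_C\functor{L}_C(A)\cong F\boxtimes_C F^\adjoint$, and since $\eta\colon A\to K$ is assumed injective that proposition identifies $\eta(A)$ with $F\boxtimes_C F^\adjoint$. Unwinding Definition/Lemma~\ref{lem:RC} this reads
\[
\eta(A)=\ker\Bigl(K=F\otimes^{\h}_B F^\adjoint \xrightarrow{\ \alpha-\beta\ } F\otimes^{\h}_B C\otimes^{\h}_B F^\adjoint\Bigr),
\]
where $\alpha\coloneqq \delta_F\otimes\id_{F^\adjoint}$ and $\beta\coloneqq \id_F\otimes\delta_{F^\adjoint}$ are built from the right and left $C$-comodule structures $\delta_F\colon F\to F\otimes^{\h}_B C$ and $\delta_{F^\adjoint}=\eta_{F^\adjoint}\colon F^\adjoint\to C\otimes^{\h}_B F^\adjoint$ of Example~\ref{ex:comodule}. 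It therefore suffices to show $\alpha(\langle\xi|\zeta\rangle)=\beta(\langle\xi|\zeta\rangle)$ for all $\xi,\zeta\in Z\boxtimes_C F^\adjoint$.

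I would write $\xi=z_1\otimes f_1^\adjoint$ and $\zeta=z_2\otimes f_2^\adjoint$ schematically (all computations being legitimate on the Haagerup tensor products). The relation defining $Z\boxtimes_C F^\adjoint$ is $(\delta_Z\otimes\id_{F^\adjoint})(\zeta)=(\id_Z\otimes\delta_{F^\adjoint})(\zeta)$, which in this notation reads $\delta_Z(z_2)\otimes f_2^\adjoint=z_2\otimes\delta_{F^\adjoint}(f_2^\adjoint)$ in $Z\otimes^{\h}_B C\otimes^{\h}_B F^\adjoint$. Applying to this the completely bounded map $\langle z_1|\cdot\rangle\otimes\id$ induced by the adjointable functional $\langle z_1|\cdot\rangle\colon Z\to B$, and using the pairing of Definition~\ref{def:star-comodule}, gives
\[
\langle z_1|\delta_Z(z_2)\rangle_C\otimes f_2^\adjoint=\langle z_1|z_2\rangle\cdot\delta_{F^\adjoint}(f_2^\adjoint).
\]
Feeding this into the $K$-valued inner product \eqref{eq:K-valued-ip} and sliding $\langle z_1|z_2\rangle\in B$ across the balanced tensor yields $\beta(\langle\xi|\zeta\rangle)=f_1\langle z_1|z_2\rangle\otimes\delta_{F^\adjoint}(f_2^\adjoint)=f_1\otimes\langle z_1|\delta_Z(z_2)\rangle_C\otimes f_2^\adjoint$; call this identity $(\mathrm{I})$.

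For $\alpha$ I would use duality. The $C^*$-involution on $K$ satisfies $\langle\xi|\zeta\rangle^*=\langle\zeta|\xi\rangle$, while the bimodule-adjoint functor \eqref{eq:tensor-adjoint} together with the involution $*$ on $C$ furnishes a conjugate-linear isometric involution $J$ on $F\otimes^{\h}_B C\otimes^{\h}_B F^\adjoint$ sending $a\otimes c\otimes b^\adjoint$ to $b\otimes c^*\otimes a^\adjoint$. Because both $\delta_F$ and $\delta_{F^\adjoint}$ are induced by the $*$-homomorphism $\eta$, these involutions intertwine the two maps, giving $\alpha=J\circ\beta\circ(\cdot)^*$. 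Granting this, $\alpha(\langle\xi|\zeta\rangle)=J(\beta(\langle\zeta|\xi\rangle))$, and applying $(\mathrm{I})$ with $\xi$ and $\zeta$ interchanged followed by $J$ gives $\alpha(\langle\xi|\zeta\rangle)=f_1\otimes\langle z_2|\delta_Z(z_1)\rangle_C^*\otimes f_2^\adjoint$. The Hermitian axiom \eqref{eq:coaction-hermitian} identifies $\langle z_2|\delta_Z(z_1)\rangle_C^*$ with $\langle z_1|\delta_Z(z_2)\rangle_C$, so comparison with $(\mathrm{I})$ yields $\alpha(\langle\xi|\zeta\rangle)=\beta(\langle\xi|\zeta\rangle)$, whence $\langle\xi|\zeta\rangle\in\eta(A)$.

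The remaining claims are then formal. Since $\eta$ is an injective $*$-homomorphism of $C^*$-algebras it is isometric with $C^*$-subalgebra image $\eta(A)\subseteq K$, so $\langle\xi|\zeta\rangle_A\coloneqq\eta^{-1}(\langle\xi|\zeta\rangle_K)$ is a well-defined $A$-valued form inheriting sesquilinearity, positivity and definiteness from the $K$-valued inner product, with $\|\langle\xi|\xi\rangle_A\|=\|\langle\xi|\xi\rangle_K\|$. As $Z\boxtimes_C F^\adjoint$ is a kernel it is closed in the Hilbert $C^*$-module $Z\otimes^{\h}_B F^\adjoint$ over $K$, hence complete in this norm, and it is a right $A$-submodule by Definition/Lemma~\ref{lem:RC}; thus it is a Hilbert $C^*$-module over $A$. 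I expect the main obstacle to be the intertwining $\alpha=J\circ\beta\circ(\cdot)^*$ used in the third step, i.e. the precise compatibility of $\delta_F$ and $\delta_{F^\adjoint}$ with the adjoint operation and the Haagerup-leg bookkeeping it entails; the conceptual crux is that the Hermitian axiom \eqref{eq:coaction-hermitian} is exactly what forces the two $C$-coactions on $\langle\xi|\zeta\rangle$ to coincide.
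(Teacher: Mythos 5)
Your proof is correct in substance, and it shares the paper's skeleton---apply Proposition \ref{prop:exp}(a) at $X=A$ to describe $\eta(A)$ as a set of coinvariants, then play the defining relation of $Z\boxtimes_C F^\adjoint$ against the Hermitian axiom \eqref{eq:coaction-hermitian}---but your central verification runs along a genuinely different track. The paper does not remain in the triple tensor product: it converts membership in $\eta(A)$ into a family of identities in $C$, one for each pair $f_1,f_2\in F$ (its condition \eqref{eq:eta-A-image}), and to know that testing against such pairs detects membership it must invoke Blecher's point-separation theorem \cite[Theorem 3.10]{Blecher-newapproach}; the Hermitian axiom is then applied to the specific elements $\langle\xi|f_1\rangle_Z,\langle\zeta|f_2\rangle_Z\in Z$ via the explicit pairings $\langle\cdot|\cdot\rangle_F$, $\langle\cdot|\cdot\rangle_Z$, $\langle z|\cdot|f\rangle$. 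You instead prove $\alpha(\langle\xi|\zeta\rangle)=\beta(\langle\xi|\zeta\rangle)$ globally in $F\otimes^{\h}_B C\otimes^{\h}_B F^\adjoint$, with the involution $J$ and the intertwining $\alpha=J\circ\beta\circ(\cdot)^*$ doing the work of the paper's $*$-manipulations inside $C$; this dispenses with the separation theorem altogether, which is a genuine economy. The obstacle you flag is real but surmountable: the intertwining does hold, and is checked on elementary tensors $f\otimes g^\adjoint\in K$ by writing $f=af'$ (nondegeneracy of the $A$-action) and using that $\eta(a^*)=\eta(a)^*$ together with $(f'^\adjoint\otimes v)^*=v^\adjoint\otimes f'$ in $C$; since both sides are continuous and linear, density of elementary tensors finishes it. The price of your route is the other piece of bookkeeping, which is where your write-up is schematic rather than rigorous: elements of $Z\boxtimes_C F^\adjoint$ are \emph{not} elementary tensors, so identity $(\mathrm{I})$ and your appeal to \eqref{eq:coaction-hermitian} cannot literally be computed on $z_1\otimes f_1^\adjoint$; they must be phrased as identities of jointly continuous sesquilinear maps---e.g.\ comparing $(\xi,\zeta)\mapsto\beta(\langle\xi|\zeta\rangle)$ with $(\xi,\zeta)\mapsto\Psi\bigl(\xi,(\delta_Z\otimes\id_{F^\adjoint})\zeta\bigr)$, where $\Psi$ is the well-defined pairing obtained from $(Z\otimes^{\h}_B F^\adjoint)^\adjoint\cong F\otimes^{\h}_B Z^\adjoint$ and the completely contractive inner-product pairing $Z^\adjoint\otimes^{\h}Z\to B$---verified on elementary tensors, extended by continuity, and only then restricted to the kernel. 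That is exactly the role the paper's explicit pairings play; once this is supplied, your argument is complete.
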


\begin{proof}
We  claim that
\begin{equation}\label{eq:eta-A-image}
\eta(A) = \left\{k\in K\ \left|\ f_1^\adjoint\otimes k(f_2)\  = (k^*(f_1))^\adjoint\otimes f_2 \textrm{ in $C$, for all }f_1,f_2\in F\right.\right\}.
\end{equation}
To prove this  we first note that Proposition \ref{prop:exp}(a), applied to $X=A$, shows that 
\begin{equation}\label{eq:image-eta}
\eta(A) = \left\{ a_1k a_2\in K\ \left|\ \eta(a_1)\otimes ka_2  = a_1k\otimes \eta(a_2) \textrm{ in }K\otimes^{\h}_A K\right.\right\}.
\end{equation}
Now  \cite[Theorem 3.10]{Blecher-newapproach} implies that as $f_1$ and $f_2$ range over $F$, the maps 
\[
K\otimes^{\h}_A K\to C,\qquad k_1\otimes k_2\mapsto (k_1^*(f_1))^\adjoint\otimes k_2(f_2)
\]
separate the points of $K\otimes^{\h}_A K$, and the identity \eqref{eq:eta-A-image}  follows from this fact combined with \eqref{eq:image-eta}.

We shall use the pairings
\[
\begin{aligned}
& \langle\cdot | \cdot\rangle_{F} : Z\times Z\otimes^{\h}_B F^\adjoint \to F  , \qquad \langle  z_1 | z_2\otimes f^\adjoint \rangle_{F} \coloneqq f\langle z_2 |z_1\rangle    \qquad \textrm{and}\\
&  \langle \cdot | \cdot\rangle_Z : Z\otimes^{\h}_B F^\adjoint \times F  \to Z,\qquad  \langle z\otimes f_1^\adjoint | f_2\rangle_Z  \coloneqq z\langle f_1|f_2\rangle,
\end{aligned}
\]
which are related to the $K$-valued inner product \eqref{eq:K-valued-ip} by the equality 
\begin{equation}\label{eq:ip-comparison}
\langle \xi | \zeta\rangle (f) =  \big\langle \langle \zeta | f\rangle_Z  \big|   \xi\big\rangle_F 
\end{equation}
for $\xi,\zeta\in Z\otimes^{\h}_B F^\adjoint$ and $f\in F$.

For each $f\in F$ and $z\in Z$ consider the map 
\begin{equation}\label{eq:langle-f-z-rangle}
\langle z|\cdot|f\rangle : Z\otimes^{\h}_B C\otimes^{\h}_B F^\adjoint \to C,\qquad z_1\otimes c\otimes f_1^\adjoint \mapsto \langle z|z_1\rangle c \langle f_1|f_2\rangle.
\end{equation}
It is easily checked that the composition
\[
Z\otimes^{\h}_B F^\adjoint \xrightarrow{\id_Z\otimes \eta_{F^\adjoint}} Z\otimes^{\h}_B C\otimes^{\h}_B F^\adjoint \xrightarrow{\langle z|\cdot|f\rangle} C
\]
is given by $\xi\mapsto \langle z|\xi\rangle_F^\adjoint \otimes f$, while the composition
\[
Z\otimes^{\h}_B F^\adjoint \xrightarrow{\delta_Z\otimes \id_{F^\adjoint}} Z\otimes^{\h}_B C\otimes^{\h}_B F^\adjoint \xrightarrow{\langle z|\cdot|f\rangle} C
\]
is given by $\xi\mapsto  \big\langle z\ \big|\ \delta_Z
\left( \langle \xi|f\rangle_Z \right)  \big\rangle_C$. Consulting the definition of $Z\boxtimes_C F^\adjoint$, we find that
\begin{equation}\label{eq:boxtimes-ident}
 \langle z|\xi\rangle_F^\adjoint \otimes f = \big\langle z\ \big|\ \delta_Z
\left( \langle \xi|f\rangle_Z \right)  \big\rangle_C \quad \textrm{for all}\quad f\in F,\ z\in Z,\ \xi\in Z\boxtimes_C F^\adjoint.
\end{equation} 
 
Now  applying  \eqref{eq:ip-comparison} and \eqref{eq:boxtimes-ident}, we find for $\xi,\zeta\in Z\boxtimes_C F^\adjoint$ and $f_1,f_2\in F$ that
\[
\begin{aligned}
& \left( \langle \xi |\zeta\rangle^*(f_1)\right)^\adjoint\otimes f_2  = \left( f_2^\adjoint \otimes \langle \zeta|\xi\rangle f_1\right)^*  = \left( f_2^\adjoint \otimes \big\langle \langle\xi|f_1\rangle_Z\ \big|\ \zeta \big\rangle \right)^*\\ &  = \big\langle \langle \xi|f_1\rangle_Z \ \big|\ \zeta\big\rangle_F^\adjoint \otimes f_2  = \big\langle \langle \xi|f_1\rangle_Z \ \big|\ \delta_Z\left(\langle \zeta|f_2\rangle_Z\right)\big\rangle_C.
\end{aligned}
\]
The Hermitian property  \eqref{eq:coaction-hermitian} of $\delta_Z$ translates, through the above chain of equalities, to the equality
\[
\left( \langle \xi|\zeta\rangle^* f_1\right)^\adjoint \otimes f_2 = \left( \left(\langle \zeta|\xi\rangle^* f_2\right)^\adjoint \otimes f_1\right)^* = f_1^\adjoint \otimes \langle \xi|\zeta\rangle f_2.
\]
In view of  \eqref{eq:eta-A-image}, this computation shows that that $\langle \xi |\zeta\rangle\in \eta(A)$ for all $\xi,\zeta\in Z\boxtimes_C F^\adjoint$, and so the latter module is indeed a Hilbert $C^*$-module over $A$. 
\end{proof}

\begin{lemma}\label{lem:FCstar}
Suppose that $\eta:A\to K$ is injective. The assignment $(Z,\delta_Z)\mapsto Z\boxtimes_C F^\adjoint$ extends to a $*$-functor $\functor{F}_C^\adjoint : \ccomod^{\cb}(C)\to \cmod(A)$, defined on morphisms by $\functor{F}_C^\adjoint(t)\coloneqq (t\otimes \id_{F^\adjoint})\restrict_{Z\boxtimes_C F^\adjoint}$.
\end{lemma}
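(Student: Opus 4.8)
The object assignment is already settled: under the standing hypothesis that $\eta$ is injective, Lemma~\ref{lem:boxtimes-ip} exhibits $Z\boxtimes_C F^\adjoint$ as a Hilbert $C^*$-module over $A$, whose $A$-valued inner product is $\langle\xi|\zeta\rangle_A=\eta^{-1}(\langle\xi|\zeta\rangle_K)$ for the $K$-valued inner product \eqref{eq:K-valued-ip}. The plan is therefore to concentrate on the morphism assignment. Functoriality in the bare sense---that $t\otimes\id_{F^\adjoint}$ carries $Z\boxtimes_C F^\adjoint$ into $W\boxtimes_C F^\adjoint$, and that the restriction respects identities and composition---is immediate from Definition/Lemma~\ref{lem:RC} applied with $R=F^\adjoint$, since that restriction is nothing but $\functor{R}_C(t)$ for the underlying operator-module comodule map. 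So the only genuinely new points are that $\functor{F}_C^\adjoint(t)$ is \emph{adjointable} as a map of Hilbert $A$-modules, and that $\functor{F}_C^\adjoint$ preserves adjoints.

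The key step is to observe that $F^\adjoint$ is itself a $C^*$-correspondence from $B$ to $K=\Compact_B(F)$, so that the interior tensor product $Z\mapsto Z\otimes_B F^\adjoint$---which by \cite[8.2.11]{BLM} coincides with the Haagerup tensor product $\otimes^{\h}_B F^\adjoint$ used here---is a $*$-functor $\cmod(B)\to\cmod(K)$. In particular, for an adjointable $t\in\CB_B^*(Z,W)$ the map $t\otimes\id_{F^\adjoint}$ is adjointable \emph{over $K$}, with adjoint $t^*\otimes\id_{F^\adjoint}$; I would record this with a one-line computation on elementary tensors from \eqref{eq:K-valued-ip}:
\[
\langle(t\otimes\id_{F^\adjoint})(z_1\otimes f_1^\adjoint)\,|\,z_2\otimes f_2^\adjoint\rangle_K = f_1\langle tz_1|z_2\rangle\otimes f_2^\adjoint = f_1\langle z_1|t^*z_2\rangle\otimes f_2^\adjoint = \langle z_1\otimes f_1^\adjoint\,|\,(t^*\otimes\id_{F^\adjoint})(z_2\otimes f_2^\adjoint)\rangle_K,
\]
using $\langle tz_1|z_2\rangle=\langle z_1|t^*z_2\rangle$ in $B$ and extending by continuity and bilinearity.

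It then remains to descend to the $\boxtimes$-submodules and to replace the $K$-valued inner product by the $A$-valued one. Here Lemma~\ref{lem:Cstar-cat} does the essential work: it guarantees that $t^*$ is again a morphism in $\ccomod^{\cb}(C)$, so that Definition/Lemma~\ref{lem:RC} applied to $t^*$ shows $t^*\otimes\id_{F^\adjoint}$ carries $W\boxtimes_C F^\adjoint$ into $Z\boxtimes_C F^\adjoint$. Thus the $K$-adjoint pair $t\otimes\id_{F^\adjoint}$, $t^*\otimes\id_{F^\adjoint}$ each map the relevant submodule into the other; since the submodule inner products are restrictions of the ambient $K$-valued one, the restrictions are mutually adjoint over $K$, and applying the $*$-isomorphism $\eta^{-1}$ (Lemma~\ref{lem:boxtimes-ip}) turns $K$-adjointness into $A$-adjointness. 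This identifies $\functor{F}_C^\adjoint(t)$ as an element of $\CB_A^*(Z\boxtimes_C F^\adjoint,W\boxtimes_C F^\adjoint)$ with adjoint $\functor{F}_C^\adjoint(t^*)$, so $\functor{F}_C^\adjoint$ is a $*$-functor. I do not anticipate a serious obstacle; the one point demanding care is that adjointability of an operator on a Hilbert module passes to a closed submodule only when both the operator and its adjoint preserve that submodule---which is exactly why Lemma~\ref{lem:Cstar-cat} is needed, rather than merely the functoriality of Definition/Lemma~\ref{lem:RC}.
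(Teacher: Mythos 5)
Your proposal is correct and follows essentially the same route as the paper's proof: both rest on the observation that $t\otimes \id_{F^\adjoint}$ and $t^*\otimes \id_{F^\adjoint}$ are mutually adjoint over $K$, invoke Lemma \ref{lem:Cstar-cat} so that both maps restrict to the $\boxtimes$-submodules, and then transport $K$-adjointness to $A$-adjointness through $\eta^{-1}$ using the definition of the $A$-valued inner product from Lemma \ref{lem:boxtimes-ip}. Your closing remark correctly identifies the one genuinely delicate point—that adjointability only descends to a closed submodule when the operator \emph{and} its adjoint both preserve it—which is precisely why the paper cites Lemma \ref{lem:Cstar-cat} rather than relying on Definition/Lemma \ref{lem:RC} alone.
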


\begin{proof}
If $t:(Z,\delta_Z)\to (W,\delta_W)$ is an adjointable map of $C^*$-comodules, then  the maps 
\[
t\otimes \id_{F^\adjoint} : Z\otimes^{\h}_B F^\adjoint \to W \otimes^{\h}_B F^\adjoint \quad \textrm{and}\quad t^*\otimes \id_{F^\adjoint}: W \otimes^{\h}_B F^\adjoint \to Z\otimes^{\h}_B F^\adjoint
\] 
are mutually adjoint maps of Hilbert $C^*$-modules over $K$. Since both $t$ and $t^*$ are comodule maps (Lemma \ref{lem:Cstar-cat}),  $t\otimes \id_{F^\adjoint}$ and $t^*\otimes \id_{F^\adjoint}$ restrict to maps between the closed $A$-submodules $Z\boxtimes_C F^\adjoint$ and $W\boxtimes_C F^\adjoint$, and we have by the definition of the $A$-valued inner products on $Z\boxtimes_C F^\adjoint$ and $W\boxtimes_C F^\adjoint$ that 
\[
\langle \functor{F}_C^\adjoint(t)\xi |\zeta\rangle = \eta^{-1}\left( \langle (t\otimes \id_{F^\adjoint})\xi |\zeta\rangle \right) = \eta^{-1}\left( \langle \xi | (t^*\otimes \id_{F^\adjoint})\zeta\rangle \right) = \langle \xi | \functor{F}_C(t^*)\zeta\rangle.
\]
Thus $\functor{F}_C(t)^*=\functor{F}_C(t^*)$ as maps of Hilbert $C^*$-modules over $A$, and this shows that $\functor{F}_C$ is a $*$-functor from $\ccomod^{\cb}(C)$ to $\cmod(A)$. 
\end{proof}

\begin{proof}[Proof of Theorem \ref{thm:Hermitian}]
Proposition \ref{prop:exp}, with $L=F$ and $R=F^\adjoint$, implies that for each $X\in \cmod(A)$ there is a completely isometric natural isomorphism of right operator $A$-modules $X\xrightarrow{\cong} \functor{F}^\adjoint_C\functor{F}_C(X)$; and that for each $(Z,\delta_Z)\in \ccomod(C)$ there is a completely isometric natural isomorphism of right operator $C$-comodules $(Z,\delta_Z)\xrightarrow{\cong} \functor{F}_C\functor{F}^\adjoint_C(Z,\delta_Z)$. These are isometric module isomorphisms between Hilbert $C^*$-modules, and so by a theorem of Lance (cf.~\cite[8.1.8]{BLM}) they are in fact unitary isomorphisms.
\end{proof}

\begin{corollary}\label{cor:coaction-cb-cc}
Suppose that $A$ acts faithfully by $B$-compact operators on a $C^*$-correspondence ${}_A F_B$, and that the inclusion $\eta:A\to\Compact_B(F)$ admits a weak expectation $\Compact_B(F)\to A^{\dual\dual}$. Then for the   coalgebra $C=F^\adjoint\otimes^{\h}_B F$ one has  $\ccomod^{\cb}(C)=\ccomod(C)$: every completely bounded Hilbert $C^*$-comodule over $C$ is in fact completely contractive.
\end{corollary}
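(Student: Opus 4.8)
The plan is to fix a comodule $(Z,\delta_Z)\in\ccomod^{\cb}(C)$ and to show that $\delta_Z$ is automatically a complete isometry; this makes it completely contractive and places $(Z,\delta_Z)$ in $\ccomod(C)$. Since $A$ acts faithfully, the homomorphism $\eta\colon A\to K=\Compact_B(F)$ is injective, so Lemmas~\ref{lem:boxtimes-ip} and~\ref{lem:FCstar} apply and $X\coloneqq\functor{F}^\adjoint_C(Z)=Z\boxtimes_C F^\adjoint$ is a genuine Hilbert $C^*$-module over $A$. The hypotheses of Theorem~\ref{thm:WEP} hold (the weak expectation supplies the map $\iota\colon K\to A^{\dual\dual}$), so Proposition~\ref{prop:exp}(b)---applied to $(Z,\delta_Z)$ viewed as an object of $\opcomod^{\cb}(C)$---shows that $\delta_Z$ restricts to a completely bounded isomorphism of comodules $\delta_Z\colon Z\to\functor{F}_C\functor{F}^\adjoint_C(Z)=X\otimes^{\h}_A F=\functor{F}(X)$, the target being a Hilbert $C^*$-module over $B$ that embeds completely isometrically in $Z\otimes^{\h}_B C$. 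Because the Hilbert-module and Haagerup operator-space structures on $\functor{F}(X)$ agree (\cite[8.2.11]{BLM}), it then suffices to prove that $\delta_Z\colon Z\to\functor{F}(X)$ is \emph{unitary}: composing with the complete isometry $\functor{F}(X)\hookrightarrow Z\otimes^{\h}_B C$ will show that $\delta_Z\colon Z\to Z\otimes^{\h}_B C$ is a complete isometry.

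The key step is to identify the Hilbert-module adjoint of $\delta_Z\colon Z\to\functor{F}(X)$ as the \emph{counit contraction} $e\coloneqq(\id_Z\otimes\epsilon)\restrict_{\functor{F}(X)}\colon\functor{F}(X)\to Z$. The counit axiom in \eqref{eq:comodule-axioms} gives $e\circ\delta_Z=\id_Z$ for free. I would then verify the adjoint relation $\langle e(w)|z\rangle_Z=\langle w|\delta_Z(z)\rangle_{\functor{F}(X)}$ for all $w\in\functor{F}(X)$ and $z\in Z$, which says precisely that $\delta_Z^\adjoint=e$. Writing $w=\sum_a\xi_a\otimes g_a$ with $\xi_a\in X\subseteq Z\otimes^{\h}_B F^\adjoint$ and $g_a\in F$, one computes $e(w)=\sum_a\langle\xi_a|g_a\rangle_Z$, where $\langle\cdot|\cdot\rangle_Z$ is the auxiliary pairing of Lemma~\ref{lem:boxtimes-ip}. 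On the other side, the $A$-valued inner product on $X$ obeys $\langle\xi|\zeta\rangle_A\cdot h=\langle\xi|\zeta\rangle_K(h)=\langle\langle\zeta|h\rangle_Z|\xi\rangle_F$ by \eqref{eq:K-valued-ip} and \eqref{eq:ip-comparison}, so the inner product on $\functor{F}(X)=X\otimes^{\h}_A F$ unwinds to $\langle\xi\otimes g|\zeta\otimes h\rangle=\langle g|\langle\langle\zeta|h\rangle_Z|\xi\rangle_F\rangle_B$. Substituting $\delta_Z(z)=\sum_b\zeta_b\otimes h_b\in X\otimes^{\h}_A F$, invoking the counit relation $\sum_b\langle\zeta_b|h_b\rangle_Z=(\id_Z\otimes\epsilon)\delta_Z(z)=z$, and using the elementary identity $\langle\langle\xi|g\rangle_Z|z\rangle_B=\langle g|\langle z|\xi\rangle_F\rangle_B$ relating the two pairings of Lemma~\ref{lem:boxtimes-ip} should collapse both sides of the adjoint relation to the common value $\sum_a\langle g_a|\langle z|\xi_a\rangle_F\rangle_B$.

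Granting $\delta_Z^\adjoint=e$, the conclusion is formal. Setting $w=\delta_Z(z_1)$ in the adjoint relation and using $e\circ\delta_Z=\id_Z$ yields $\langle\delta_Z(z_1)|\delta_Z(z_2)\rangle_{\functor{F}(X)}=\langle z_1|z_2\rangle_Z$, so $\delta_Z$ preserves inner products and is a complete isometry onto its image; surjectivity onto $\functor{F}(X)$, hence unitarity, comes from Proposition~\ref{prop:exp}(b). The reduction of the first paragraph then shows that $\delta_Z\colon Z\to Z\otimes^{\h}_B C$ is a complete isometry, so $(Z,\delta_Z)\in\ccomod(C)$ and therefore $\ccomod^{\cb}(C)=\ccomod(C)$.

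I expect the second paragraph to be the main obstacle. Conceptually it is clean---it merely records that the coaction $\delta_Z$ and the counit contraction $e$ are mutually adjoint---but executing it requires unwinding several nested pairings (the $K$-valued product \eqref{eq:K-valued-ip}, the $A$-valued product on $Z\boxtimes_C F^\adjoint$, and the auxiliary pairings $\langle\cdot|\cdot\rangle_Z$ and $\langle\cdot|\cdot\rangle_F$) while keeping careful track of which tensor legs are balanced over $A$ and which over $B$. The one indispensable hypothesis, as opposed to bookkeeping, is the Hermitian condition \eqref{eq:coaction-hermitian}: it does not appear explicitly in the final computation but enters upstream through Lemma~\ref{lem:boxtimes-ip}, which is exactly what guarantees that $X=Z\boxtimes_C F^\adjoint$ is a Hilbert $C^*$-module over $A$, and hence that $\functor{F}(X)$ carries the $B$-valued inner product whose adjointness with $e$ drives the whole argument.
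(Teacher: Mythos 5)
Your proof is correct, and it reaches the conclusion by a genuinely different mechanism from the paper's, although both arguments rest on the same two pillars: Proposition \ref{prop:exp}(b), which identifies $\image(\delta_Z)$ with $\functor{F}(X)=(Z\boxtimes_C F^\adjoint)\otimes^{\h}_A F$, and Lemma \ref{lem:boxtimes-ip}, which says that the $A$-valued inner product on $X=Z\boxtimes_C F^\adjoint$ is the restriction of the $K$-valued inner product on $Z\otimes^{\h}_B F^\adjoint$ (this is indeed where the Hermitian condition enters, exactly as you say). From there the paper stays at the operator-space level and never mentions adjoints or unitarity: it proves that the counit contraction $\epsilon_Z$ is \emph{completely isometric} on $\image(\delta_Z)$, by factoring its restriction as the completely isometric inclusion $(Z\boxtimes_C F^\adjoint)\otimes^{\h}_A F\into Z\otimes^{\h}_B F^\adjoint\otimes^{\h}_K F$ (note the tensor product over $K$, which is what the restricted-inner-product observation buys) followed by $\id_Z$ tensored with the map $F^\adjoint\otimes^{\h}_K F\to B$, the latter being completely isometric because it is a $C^*$-isomorphism onto the closed ideal $\image(\epsilon)\subseteq B$; the identity $\epsilon_Z\circ\delta_Z=\id_Z$ then forces $\delta_Z$ to be a complete isometry. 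You instead work inside the $C^*$-category: you identify $\epsilon_Z\restrict_{\functor{F}(X)}$ as the Hilbert-module adjoint of $\delta_Z\colon Z\to\functor{F}(X)$, so that the counit axiom becomes $\delta_Z^\adjoint\delta_Z=\id_Z$, i.e.\ inner-product preservation, with surjectivity (Proposition \ref{prop:exp}(b) again) supplying unitarity. Your pairing computation does close up as you predict: both sides of the adjoint relation collapse to $\sum_a\langle g_a\,|\,\langle z|\xi_a\rangle_F\rangle_B$ via the identity $\langle\langle\xi|g\rangle_Z|z\rangle_B=\langle g|\langle z|\xi\rangle_F\rangle_B$ and the counit relation, and the density/continuity issues you flag are routine. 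The trade-off: the paper's route is shorter, replacing your computation by two structural complete isometries; yours yields the sharper structural statement that the coaction of any completely bounded Hilbert $C^*$-comodule is automatically an adjointable, indeed unitary, map onto its image, with $\delta_Z^\adjoint=\epsilon_Z\restrict_{\image(\delta_Z)}$. One point you should make explicit: invoking \cite[8.2.11]{BLM} to match the Hilbert-module and Haagerup operator-space structures on $\functor{F}(X)$ also requires knowing that the canonical operator-space structure of $X$ as a Hilbert $A$-module agrees with its subspace structure inside $Z\otimes^{\h}_B F^\adjoint$; this follows from the restricted-inner-product observation together with the fact that $\eta$ is a complete isometry, and it is precisely the step the paper spells out when it passes to the tensor product over $K$.
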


\begin{proof}
Let $(Z,\delta_Z)\in \ccomod^{\cb}(C)$ be a completely bounded comodule. Proposition \ref{prop:exp}(b) shows that the image of the coaction $\delta_Z:Z\to Z\otimes^{\h}_B C$ is equal to the Hilbert $C^*$-module $(Z\boxtimes_C F^\adjoint)\otimes^{\h}_A F$. The $A$-valued inner product on $Z\boxtimes_C F^\adjoint$ was defined (in Lemma \ref{lem:boxtimes-ip}) by restriction from the $K$-valued inner product on $Z\otimes^{\h}_B F^\adjoint$, and it follows from this that the map
\[
(Z\boxtimes_C F^\adjoint)\otimes^{\h}_A F \xrightarrow{\xi\otimes f\mapsto \xi\otimes f}   Z\otimes^{\h}_B F^\adjoint \otimes^{\h}_K F
\]
is completely isometric. Now the map 
\[
F^\adjoint \otimes^{\h}_K F \to B,\qquad f_1^\adjoint\otimes  f_2\mapsto \langle f_1|f_2\rangle=\epsilon(f_1^\adjoint\otimes_A f_2)
\]
is also completely isometric: indeed, it is an isomorphism of $C^*$-algebras from the algebra of $K$-compact operators on $F^\adjoint$ to the closed ideal $\image(\epsilon)\subseteq B$. It follows from these two observations that the map $\epsilon_Z: Z\otimes^{\h}_B C\to Z$ is completely isometric on the image of $\delta_Z$. Since $\epsilon_Z\circ \delta_Z=\id_Z$, we conclude that $\delta_Z$ is also a complete isometry.
\end{proof}

\begin{remark}\label{rem:bimodule-cstar}
One can adapt Theorem \ref{thm:Hermitian} to categories of bimodules, similarly to what was explained in Remark \ref{rem:bimodules}. So consider, for $C^*$-algebras $A$, $B$ and $D$, the categories  $\cmod(D,A)$ and $\cmod(D,B)$ of $C^*$-correspondences from $D$ to $A$, and $C^*$-correspondences from $D$ to $B$. Let ${}_A F_B$ be a $C^*$-correspondence on which $A$ acts by $B$-compact operators, and consider the associated coalgebra $C=F^\adjoint\otimes^{\h}_A F$. The proof of Theorem \ref{thm:Hermitian} carries over verbatim to this setting, and gives for instance the following criterion for factoring operators on $C^*$-correspondences: with the same assumptions as Theorem \ref{thm:Hermitian}, if $Z\in \cmod(D,B)$ is a $C^*$-correspondence and $t:Z\to Z$ is an adjointable bimodule map, then there exists a correspondence $X\in \cmod(D,A)$, an adjointable bimodule map $s:X\to X$, and a unitary bimodule isomorphism $u:Z\to X\otimes^{\h}_A F$ with $t=u^*(s\otimes\id_F)u$, if and only if there exists a completely bounded $D$-$B$-linear coaction $\delta_Z:Z\to Z\otimes^{\h}_B C$ satisfying \eqref{eq:coaction-hermitian}, and having $\delta_Z\circ t = (t\otimes\id_C)\circ \delta_Z$. 
\end{remark}

\section{Examples}\label{sec:examples}

\subsection{Parabolic induction}\label{subsec:parabolic}

Let $G$ be a real reductive group, and let $P$ be a parabolic subgroup of $G$ with   unipotent radical $N$. The quotient $L\coloneqq P/N$ is another real reductive group. An example is $G=\GL_n(\R)$ with (for $n=a+b$)
\[
P=\begin{bmatrix} \GL_a(\R) & M_{a\times b}(\R) \\ 0  & \GL_b(\R)\end{bmatrix},\  N=\begin{bmatrix} 1_{a} & M_{a\times b}(\R) \\ 0 & 1_{b}\end{bmatrix},\ L\cong \GL_a(\R)\times \GL_b(\R).
\]

Clare \cite{Clare} showed how to construct a $C^*$-correspondence, which we shall denote here by ${}_{C^*_r(G)}C^*_r(G/N)_{C^*_r(L)}$, whose associated tensor-product functor
\[
\Ind_P:\starRep(C^*_r(L)) \to \starRep(C^*_r(G)),\qquad H\mapsto C^*_r(G/N)\otimes^{\h}_{C^*_r(L)} H
\]
is isomorphic to the well-known functor  of \emph{parabolic induction} of tempered unitary representations. This correspondence was further analysed in \cite{CCH-Compositio} and \cite{CCH-JIMJ}, where we showed that the adjoint operator bimodule 
\[
C^*_r(N\backslash G)\coloneqq {}_{C^*_r(L)}C^*_r(G/N)_{C^*_r(G)}^\adjoint
\] 
is completely boundedly isomorphic to a $C^*$-correspondence. This implies that the \emph{parabolic restriction} functor 
\[
\Res_P:\opmod(C^*_r(G))\to \opmod(C^*_r(L)),\qquad X\mapsto X\otimes^{\h}_{C^*_r(G)} C^*_r(G/N)
\]
sends Hilbert-space representations of $C^*_r(G)$ to Hilbert-space representations of $C^*_r(L)$, and hence furnishes a two-sided adjoint to $\Ind_P$ on tempered unitary representations. The arguments in \cite{CCH-Compositio} relied  on deep theorems from representation theory, and it would be of great interest to find an alternative, more geometric route to this adjunction theorem. 

%In \cite{CH-Kadison} we began a study of the natural extensions of the functors $\Ind_P$ and $\Res_P$ to the categories of operator modules over $C^*_r(G)$ and $C^*_r(L)$. This study was motivated by the observation, due to Clare, that simple geometric considerations (the compactness of $G/P$) ensure that $C^*_r(G)$ acts on $C^*_r(G/N)$ through $C^*_r(L)$-compact operators. It follows that $( C^*_r(G/N), C^*_r(N\backslash G))$ is an adjoint pair of bimodules in the sense of Definition \ref{def:adjunction}, giving immediately an adjunction between $\Ind_P$ and $\Res_P$ at the level of operator modules. Let us now explain how Theorem \ref{thm:WEP}, applied to this adjunction, describes a precise way in which certain representations of $G$ can be identified, using the parabolic restriction functor $\Res_P$, with representations of $L$ equipped with extra structure related to intertwining operators. 
% 

The operator $C^*_r(L)$-bimodule 
\[
C_P\coloneqq C^*_r(N\backslash G)\otimes^{\h}_{C^*_r(G)} C^*_r(G/N)
\]
is of great interest from a representation-theoretic standpoint: indeed, it follows from \cite[1.5.14 \& 3.5.10]{BLM} that for every pair of tempered unitary representations $H_1,H_2$ of $L$, the space of intertwining operators between the parabolically induced representations $\Ind_P H_1$ and $\Ind_P H_2$ can be recovered as 
\[
\Bounded_G(\Ind_P H_1,\Ind_P H_2) \cong \left( H_2^\adjoint\otimes^{\h}_{C^*_r(L)} C_P \otimes^{\h}_{C^*_r(L)} H_1\right)^\dual.
\]
The results of this paper show that $C_P$ carries extra algebraic structure that is very relevant to studying the representations of $G$.

\begin{corollary}\label{cor:reductive}
The operator bimodule $C_P$ is an operator $C^*_r(L)$-coalgebra, and the parabolic restriction functor $\Res_P:X\to X\otimes^{\h}_{C^*_r(G)} C^*_r(G/N)$ induces a completely isometric equivalence of $\opsp_1$-categories
\[
\Res_P: \opmod(C^*_r(G)_P) \xrightarrow{\cong} \opcomod(C_P),
\]
and a unitary equivalence of $C^*$-categories
\[
\Res_P:\cmod(C^*_r(G)_P) \xrightarrow{\cong} \ccomod(C_P),
\]
where $C^*_r(G)_P$ denotes the image of $C^*_r(G)$ under the action homomorphism $C^*_r(G)\to  \Compact_{C^*_r(L)}(C^*_r(G/N))$.
\end{corollary}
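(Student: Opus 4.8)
The plan is to exhibit Corollary~\ref{cor:reductive} as a direct application of Theorems~\ref{thm:WEP} and~\ref{thm:Hermitian} to the adjoint pair of Example~\ref{ex:corresp} built from Clare's correspondence. Explicitly, I would take $B=C^*_r(L)$, $F=C^*_r(G/N)$ regarded as a right Hilbert $C^*$-module over $B$, and $F^\adjoint=C^*_r(N\backslash G)$. The results of \cite{Clare,CCH-Compositio,CCH-JIMJ} (which underlie the very definition of $C^*_r(G)_P$) guarantee that the left action of $C^*_r(G)$ on $F$ is by $B$-compact operators, so $({}_{C^*_r(G)}F_B,{}_BF^\adjoint_{C^*_r(G)})$ is an adjoint pair in the sense of Example~\ref{ex:corresp}. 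By Definition~\ref{def:C} its coalgebra $F^\adjoint\otimes^{\h}_{C^*_r(G)}F$ is precisely $C_P$, and the functor $\functor{F}$ of Haagerup tensor product with $F$ is precisely $\Res_P$; thus $C_P$ is an operator $C^*_r(L)$-coalgebra, and the two comparison functors supplied by Theorems~\ref{thm:WEP} and~\ref{thm:Hermitian} are exactly the two versions of $\Res_P$ in the statement.

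The one hypothesis of those theorems that is not immediate is faithfulness: the action homomorphism $C^*_r(G)\to\Compact_{C^*_r(L)}(C^*_r(G/N))$ need not be injective. First I would remove this obstruction by passing to the faithful quotient $A_P:=C^*_r(G)_P=C^*_r(G)/\ker(\text{action})$, through which the (nondegenerate) action of $C^*_r(G)$ on $F$ factors. The routine point to verify is that replacing $C^*_r(G)$ by $A_P$ alters nothing relevant: since the left action on $F$ and the right action on $F^\adjoint$ both factor through $A_P$, the balancing relations over $C^*_r(G)$ and over $A_P$ coincide, whence $F^\adjoint\otimes^{\h}_{A_P}F=C_P$, and for any $X\in\opmod(A_P)$ (or $\cmod(A_P)$), viewed as a $C^*_r(G)$-module, $X\otimes^{\h}_{A_P}F=X\otimes^{\h}_{C^*_r(G)}F=\Res_P(X)$. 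Hence $({}_{A_P}F_B,{}_BF^\adjoint_{A_P})$ is an adjoint pair with the \emph{same} coalgebra $C_P$ and the \emph{same} functor $\Res_P$, but now $A_P$ acts faithfully on $F$.

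It then remains to verify the hypotheses of Theorems~\ref{thm:WEP} and~\ref{thm:Hermitian} for $A_P$. Faithfulness holds by construction, and I would supply the required weak expectation $\Compact_B(F)\to A_P^{\dual\dual}$ through Lemma~\ref{lem:WEP-faithful}, for which it suffices that $A_P$ enjoy the weak expectation property. This is where the single genuinely external input enters: since $G$ is a real reductive group it is of type~I (Harish-Chandra), so $C^*_r(G)$ is a type~I---and hence nuclear---$C^*$-algebra, and its quotient $A_P$ is therefore nuclear and in particular has the weak expectation property. With faithfulness and the weak expectation in hand, Theorem~\ref{thm:WEP} delivers the completely isometric equivalence $\Res_P:\opmod(A_P)\xrightarrow{\cong}\opcomod(C_P)$ of $\opsp_1$-categories, and Theorem~\ref{thm:Hermitian} delivers the unitary equivalence $\Res_P:\cmod(A_P)\xrightarrow{\cong}\ccomod(C_P)$ of $C^*$-categories. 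The main obstacle is really the invocation of nuclearity of $C^*_r(G)$ (equivalently, the type~I property of $G$); everything else is the bookkeeping of descending the adjoint pair to the faithful quotient $A_P$ and matching its coalgebra and comparison functor with $C_P$ and $\Res_P$.
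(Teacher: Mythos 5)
Your proposal is correct and follows essentially the same route as the paper's proof: Clare's observation that the action is by $C^*_r(L)$-compact operators gives the adjoint pair and hence the coalgebra structure on $C_P$; Harish-Chandra's type~I theorem gives nuclearity of $C^*_r(G)$ and hence the weak expectation property of the quotient $C^*_r(G)_P$; and the equivalences then follow from Lemma~\ref{lem:WEP-faithful} together with Theorems~\ref{thm:WEP} and~\ref{thm:Hermitian}. Your explicit check that passing to the faithful quotient $C^*_r(G)_P$ changes neither the coalgebra $C_P$ nor the functor $\Res_P$ (because the balancing relations in the Haagerup tensor product coincide) is a bookkeeping point the paper leaves implicit, but it is consistent with, not a departure from, the paper's argument.
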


\begin{proof}
It was observed by Clare that simple geometric considerations (the compactness of $G/P$) ensure that $C^*_r(G)$ acts on $C^*_r(G/N)$ through $C^*_r(L)$-compact operators. It follows that $( C^*_r(G/N), C^*_r(N\backslash G))$ is an adjoint pair of bimodules in the sense of Definition \ref{def:adjunction}, and so the bimodule $C_P$ is a $C^*_r(L)$-coalgebra as in Section \ref{subsec:adjoint-coalgebra}. The $C^*$-algebra $C^*_r(G)$ is nuclear---indeed, by a theorem of Harish-Chandra \cite[Theorem 7]{Harish-Chandra}, it is of type I---and hence its quotient $C^*_r(G)_P$ is also nuclear and thus has the weak expectation property. Since $C^*_r(G/N)$ is faithful as a $C^*_r(G)_P$-module  by definition, the asserted equivalences follow  from Lemma \ref{lem:WEP-faithful} and  Theorems \ref{thm:WEP} and \ref{thm:Hermitian}.
\end{proof}

\begin{remark}\label{rem:Frobenius}
The results of \cite{CCH-Compositio} and \cite{CCH-JIMJ} allow us to say more about the coalgebra $C_P$: the $C^*_r(G)$-valued inner product on $C^*_r(N\backslash G)$ constructed in \cite{CCH-Compositio} furnishes $C_P$ with a completely bounded multiplication $C_P\otimes^{\h}_{C^*_r(L)} C_P\to C_P$, and as an algebra $C_P$ is completely boundedly isomorphic to the $C^*$-algebra of $C^*_r(G)$-compact operators on $C^*_r(N\backslash G)$. The algebra and coalgebra structures on $C_P$ are compatible with one another: the coproduct is a map of $C_P$-bimodules, and the product is a map of $C_P$-bi-comodules; in other words $C_P$ is a (non-unital) \emph{Frobenius algebra} in the category of operator $B$-bimodules. Just as for Frobenius algebras in the purely algebraic setting (cf.~\cite{Abrams}), one has an equivalence $\opcomod^{\cb}(C_P)\cong \opmod^{\cb}(C_P)$ between the categories of (completely bounded) comodules and modules over $C_P$. Similar considerations apply to any locally adjoint pair of $C^*$-correspondences admitting a unit, in the terminology of \cite{CCH-JIMJ}. 
\end{remark}

\subsection{The Fourier coalgebra of a compact group}\label{subsec:Fourier}

To begin with we let $A\subseteq\Compact(H)$ be a $C^*$-algebra of compact operators, which is nondegenerate in the sense that $H=AH$. Regarding $H$ as a $C^*$-correspondence from $A$ to $\C$ puts us in the setting of Example \ref{ex:corresp}: the pair $({}_A H_{\C}, {}_\C H^\adjoint_A)$ is an adjoint pair of operator bimodules, with unit the inclusion map $A\to \Compact(H)\cong H\otimes^{\h}_{\C} H^\adjoint$ and counit the inner product $H^\adjoint\otimes^{\h}_A H\to \C$.  

The operator $\C$-coalgebra $C=H^\adjoint\otimes^{\h}_A H$ associated to this adjoint pair is isomorphic to one previously studied in \cite{Effros-Ruan-Hopf}, as we shall now explain. We note firstly that if $A\subseteq \Bounded(H)$ is any nondegenerate $C^*$-algebra of operators, then the pairing 
\[
(H^\adjoint\otimes^{\h}_A H) \times A' \to \C,\qquad \llangle \xi^\adjoint\otimes \zeta | t\rrangle \coloneqq  \langle \xi| t\zeta\rangle \qquad (\xi,\zeta\in H,\ t\in A')
\]
induces a completely isometric isomorphism of operator spaces
\begin{equation}\label{eq:predual}
H^\adjoint\otimes^{\h}_A H \xrightarrow{\cong} A'_{\dual}
\end{equation}
between the Haagerup tensor product $H^\adjoint\otimes^{\h}_A H$ and the predual   of the commutant $A'\subseteq \Bounded(H)$. This is a special case of \cite[Corollary 3.5.10]{BLM}.  

In \cite{Effros-Ruan-Hopf}, Effros and Ruan showed that the predual $M_\dual$ of any von Neumann algebra $M$ carries a coalgebra structure dual to the algebra structure of $M$: the inclusion-of-the-unit map $u:\C\to M$ dualises to give a counit $u_\dual:M_\dual\to \C$, while the multiplication map $\mu:M\otimes^{\sigmah}M\to M$ dualises to give a coproduct $\mu_\dual:M_\dual\to M_\dual\otimes^{\eh} M_\dual$.  Here $\otimes^{\sigmah}$ and $\otimes^{\eh}$ denote the \emph{normal} and the \emph{extended} Haagerup tensor products, respectively. We refer the reader to \cite{Effros-Ruan-Hopf} for details.

\begin{lemma}\label{lem:Effros-Ruan-iso}
If $A\subseteq \Compact(H)$ is a nondegenerate $C^*$-algebra of compact operators, then Effros and Ruan's coproduct $\mu_\dual:A'_{\dual}\to A'_{\dual}\otimes^{\eh} A'_{\dual}$ has image contained in $A'_{\dual}\otimes^{\h} A'_{\dual}$, and the map \eqref{eq:predual} is a completely isometric isomorphism of operator $\C$-coalgebras.
\end{lemma}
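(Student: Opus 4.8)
The plan is to transport the coalgebra structure of $C=H^\adjoint\otimes^{\h}_A H$ across the completely isometric isomorphism $\Phi$ of \eqref{eq:predual} and to check that it agrees with the Effros--Ruan structure on $A'_{\dual}$. Throughout I write $M=A'$, which is a von Neumann algebra (being the commutant of a self-adjoint set of operators), and I recall that $\Phi$ sends $\xi^\adjoint\otimes\zeta$ to the functional $\omega_{\xi,\zeta}\coloneqq\llangle\xi^\adjoint\otimes\zeta\,|\,\cdot\,\rrangle:t\mapsto\langle\xi|t\zeta\rangle$ on $M$. Since $\otimes^{\h}$ is functorial, $\Phi$ induces a completely isometric isomorphism $C\otimes^{\h}C\cong A'_{\dual}\otimes^{\h}A'_{\dual}$, and the latter sits completely isometrically inside $A'_{\dual}\otimes^{\eh}A'_{\dual}=(M\otimes^{\sigmah}M)_{\dual}$. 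The counits match at once: by Example \ref{ex:corresp} the counit of $C$ is the inner product $\xi^\adjoint\otimes\zeta\mapsto\langle\xi|\zeta\rangle$, while Effros and Ruan's counit $u_{\dual}$ sends $\omega$ to $\omega(1_M)$, and indeed $\omega_{\xi,\zeta}(1_M)=\langle\xi|\zeta\rangle$.

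The main step is to identify the transported coproduct with $\mu_{\dual}$. By Definition \ref{def:C} the coproduct $\delta=\eta_R\otimes\id_L$ is obtained by inserting the inclusion $\eta:A\to K=\Compact(H)=H\otimes^{\h}H^\adjoint$ into the middle of $C\cong H^\adjoint\otimes^{\h}_A A\otimes^{\h}_A H$; concretely, with $(v_\lambda)$ a contractive approximate unit of $A$ one has
\[
\delta(\xi^\adjoint\otimes\zeta)=\lim_\lambda\, \xi^\adjoint\otimes_A v_\lambda\otimes_A\zeta\in C\otimes^{\h}C,
\]
where $v_\lambda$ is now regarded as an element of $K$. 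Fix $s,t\in M$. A direct computation on a rank-one operator $k=e\otimes f^\adjoint\in K$ shows that the composite of $k\mapsto\xi^\adjoint\otimes_A k\otimes_A\zeta$ with the pairing $\llangle\cdot\,|\,s\otimes t\rrangle$ sends $k$ to $\langle\xi|\,s\,k\,t\,\zeta\rangle$; by linearity and norm-continuity (finite-rank operators being dense in $K$) the same formula holds for every $k\in K$. Applying this with $k=v_\lambda$, using that $s,t\in A'$ commute with $v_\lambda\in A$, and letting $v_\lambda\to 1$ strongly on $H$, I obtain
\[
\llangle\delta(\xi^\adjoint\otimes\zeta)\,|\,s\otimes t\rrangle=\lim_\lambda\langle\xi|\,s\,v_\lambda\,t\,\zeta\rangle=\lim_\lambda\langle\xi|\,v_\lambda\,st\,\zeta\rangle=\langle\xi|\,st\,\zeta\rangle=\omega_{\xi,\zeta}(st).
\]
Since $\langle\mu_{\dual}(\omega_{\xi,\zeta}),s\otimes t\rangle=\omega_{\xi,\zeta}(st)$, the transported coproduct and $\mu_{\dual}$ take the same value against every elementary tensor $s\otimes t$.

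To finish I would invoke the fact (from \cite{Effros-Ruan-Hopf}) that the algebraic tensor product $M\odot M$ is weak-$*$ dense in $M\otimes^{\sigmah}M$, so that an element of its predual $(M\otimes^{\sigmah}M)_{\dual}=A'_{\dual}\otimes^{\eh}A'_{\dual}$ is determined by its values on elementary tensors. The previous computation then shows that $\mu_{\dual}(\omega_{\xi,\zeta})$ coincides with the image of $\delta(\xi^\adjoint\otimes\zeta)$; and because $\delta$ already takes values in the subspace $C\otimes^{\h}C\cong A'_{\dual}\otimes^{\h}A'_{\dual}$, this establishes simultaneously that $\mu_{\dual}$ has image in $A'_{\dual}\otimes^{\h}A'_{\dual}$ and that $\Phi$ intertwines the two coproducts. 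Combined with the matching of counits, this exhibits $\Phi$ as a completely isometric isomorphism of operator $\C$-coalgebras. The step I expect to demand the most care is the explicit description of $\delta$ in the second paragraph: one must justify the approximate-unit limit inside the Haagerup tensor product (which rests on the nondegeneracy identification $C\cong H^\adjoint\otimes^{\h}_A A\otimes^{\h}_A H$) together with the reduction to rank-one operators, and separately confirm the weak-$*$ density needed to pass from elementary tensors to all of $M\otimes^{\sigmah}M$.
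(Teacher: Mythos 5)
Your proof is correct and follows essentially the same route as the paper's: transport the coalgebra structure of $C=H^\adjoint\otimes^{\h}_A H$ through $\phi$, check the counits directly, and verify the coproduct identity by pairing against elementary tensors $s\otimes t\in A'\odot A'$, using the weak-$*$ density of $A'\odot A'$ in $A'\otimes^{\sigmah}A'$ together with a rank-one computation in $\Compact(H)$. The only (immaterial) difference is bookkeeping: the paper writes elements of $C$ as $\xi^\adjoint\otimes a\zeta$ and norm-approximates $a\in A$ by finite-rank operators, whereas you insert a contractive approximate unit $v_\lambda\to 1$ (strongly) and commute it past $s,t\in A'$ — both hinge on the same commutation of $A$ with $A'$.
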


\begin{proof}
Let us denote the isomorphism \eqref{eq:predual} by $\phi:C\to A'_\dual$. We are claiming that the diagrams 
\[
\xymatrix@C=40pt{
C\ar[r]^-{\phi} \ar[d]_-{\delta} & A'_\dual \ar[d]^-{\mu_\dual} \\
C\otimes^{\h} C \ar[r]^-{i\circ(\phi\otimes\phi)} & A'_\dual\otimes^{\eh} A'_\dual
}
\qquad \text{and}\qquad 
\xymatrix@C=40pt{
C\ar[r]^-{\phi} \ar[d]_-{\epsilon} & A'_\dual \ar[dl]^-{u_\dual} \\
\C & 
}
\]
commute, where $i:A'_\dual\otimes^{\h} A'_\dual \to A'_\dual\otimes^{\eh} A'_\dual$ is the canonical inclusion. 

By the definition of the coproduct $\mu_\dual$, and by the fact that the image of the algebraic tensor product $A'\otimes A'$ is weak$^*$-dense in $A'\otimes^{\sigmah}A'$ (see \cite[Lemma 5.8]{Effros-Ruan-Hopf}), to show that the first diagram commutes it will suffice to prove that 
\begin{equation*}\label{eq:duality-pf-1}
\llangle \phi(\xi^\adjoint\otimes a\zeta) | t_1 t_2 \rrangle = \llangle i\circ(\phi\otimes\phi)\circ\delta(\xi^\adjoint\otimes a\zeta)| t_1\otimes t_2\rrangle
\end{equation*}
for all $\xi,\zeta\in H$, all $a\in A$ and all $t_1,t_2\in A'$ (where $\llangle \cdot,\cdot\rrangle$ denote the duality pairings). Approximating $a$ in norm by finite-rank operators 
\[
a = \lim_{n\to \infty} \sum_{m=1}^{r_n} \alpha_{n,m}\otimes \beta_{n,m}^\adjoint  \qquad (\alpha_{n,m},\beta_{n,m}\in H) 
\]
we have 
\[
 \delta(\xi^\adjoint\otimes a\zeta) = \lim_n \sum_m  (\xi^\adjoint\otimes \alpha_{n,m})\otimes  (\beta_{n,m}^\adjoint\otimes \zeta),
\]
and so
\[
\begin{aligned}
& \llangle i\circ(\phi\otimes\phi)\circ\delta(\xi^\adjoint\otimes a\zeta) | t_1\otimes t_2\rrangle \\ & = 
\lim_n \sum_m \llangle \phi(\xi^\adjoint\otimes \alpha_{n,m}) | t_1\rrangle \llangle \phi(\beta_{n,m}^\adjoint\otimes \zeta), t_2\rrangle \\
& =\lim_n\sum_m \langle \xi | t_1\alpha_{n,m}\rangle \langle \beta_{n,m} | t_2\zeta\rangle \\
& = \langle \xi | t_1 a t_2\zeta\rangle = \langle \xi | t_1 t_2 a\zeta\rangle   = \llangle \phi(\xi^\adjoint\otimes a\zeta) | t_1 t_2\rrangle.
\end{aligned}
\]
Thus the first diagram commutes.

The commutativity of the second diagram is easily seen by noting that  
\[
\epsilon(\xi^\adjoint\otimes\zeta) = \langle \xi | \zeta\rangle = \llangle \phi(\xi^\adjoint\otimes\zeta) | 1\rrangle = u_\dual(\phi(\xi^\adjoint\otimes \zeta)).\qedhere
\]
\end{proof}

Since $A$ has the weak expectation property (indeed, $A$ is nuclear), Theorem \ref{thm:WEP} gives the following identification of $A$-modules and $A'_{\dual}$-comodules:

\begin{corollary}\label{cor:Effros-Ruan}
If $A\subseteq\Compact(H)$ is a nondegenerate $C^*$-algebra of compact operators then the comparison functor
\[
\opmod(A)\to \opcomod(A'_\dual),\qquad X\mapsto (X\otimes^{\h}_A H, \eta_X\otimes \id_H)
\]
is a completely isometric equivalence.\hfill\qed
\end{corollary}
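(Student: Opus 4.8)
The plan is to deduce the statement directly from Theorem \ref{thm:WEP}, combined with the coalgebra isomorphism of Lemma \ref{lem:Effros-Ruan-iso}. As recorded in the discussion preceding the corollary, the pair $({}_A H_\C, {}_\C H^\adjoint_A)$ is an adjoint pair of operator bimodules as in Example \ref{ex:corresp}, with $L=H$, $R=H^\adjoint$, $B=\C$, unit $\eta:A\to K=H\otimes^{\h}_\C H^\adjoint\cong\Compact(H)$ the inclusion, and associated operator $\C$-coalgebra $C=H^\adjoint\otimes^{\h}_A H$. Comparing Definition \ref{def:comodules-comparison} and Example \ref{ex:comodule} with the statement, the assignment $X\mapsto (X\otimes^{\h}_A H,\,\eta_X\otimes\id_H)$ is precisely the comparison functor $\functor{L}_C:\opmod(A)\to\opcomod(C)$ for this adjoint pair, once one identifies the target coalgebra $C$ with $A'_\dual$ via Lemma \ref{lem:Effros-Ruan-iso}.

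Next I would verify the hypotheses of Theorem \ref{thm:WEP}. Since $A\subseteq\Compact(H)$, the left action of $A$ on $H$ is the given faithful representation, so $H$ is faithful as a left $A$-module and the unit $\eta$ is injective. Moreover a nondegenerate $C^*$-algebra of compact operators is a $c_0$-direct sum of elementary algebras $\Compact(H_i)$, hence nuclear, and therefore has the weak expectation property. Lemma \ref{lem:WEP-faithful} then supplies the completely contractive $A$-bimodule map $\iota:K\to A^{\dual\dual}$ with $\iota\circ\eta$ the canonical embedding that is required by Theorem \ref{thm:WEP}. That theorem accordingly asserts that $\functor{L}_C:\opmod(A)\to\opcomod(C)$ is a completely isometric equivalence.

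Finally I would transport this equivalence along the completely isometric isomorphism of operator $\C$-coalgebras $\phi:C\xrightarrow{\cong}A'_\dual$ furnished by Lemma \ref{lem:Effros-Ruan-iso}. Such an isomorphism induces a completely isometric equivalence $\opcomod(C)\to\opcomod(A'_\dual)$ sending $(Z,\delta_Z)\mapsto (Z,(\id_Z\otimes\phi)\circ\delta_Z)$; and since the coaction $\eta_X\otimes\id_H$ takes values in $(X\otimes^{\h}_A H)\otimes^{\h}_\C C$, composing $\functor{L}_C$ with this identification yields exactly the functor in the statement. Being the composite of two completely isometric equivalences, it is itself a completely isometric equivalence, which is the claim.

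I expect no serious obstacle, as the substantive work lies entirely in Theorem \ref{thm:WEP} and Lemma \ref{lem:Effros-Ruan-iso}. The only point meriting care is the routine verification that the coalgebra isomorphism of Lemma \ref{lem:Effros-Ruan-iso} genuinely intertwines the two comparison functors, i.e.\ that passing from $C$ to $A'_\dual$ amounts to nothing more than relabelling the target coalgebra of the coaction.
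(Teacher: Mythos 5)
Your proposal is correct and follows essentially the same route as the paper: the paper derives the corollary immediately from Theorem \ref{thm:WEP} (whose hypotheses hold because $A$, being a nondegenerate $C^*$-algebra of compact operators, is nuclear and hence has the weak expectation property, and acts faithfully on $H$), together with the identification of the coalgebra $H^\adjoint\otimes^{\h}_A H$ with $A'_\dual$ from Lemma \ref{lem:Effros-Ruan-iso}. Your write-up merely makes explicit the routine details (Lemma \ref{lem:WEP-faithful} and the transport of comodule categories along the coalgebra isomorphism) that the paper leaves implicit.
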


Now let $G$ be a compact group, and consider the right-regular representation $\rho$ of $G$ on $L^2(G)$. As is well known, the commutant $\rho(G)'$ is isomorphic to the group von Neumann algebra of $G$, whose predual $\rho(G)'_\dual$ is the \emph{Fourier algebra} $A(G)$ of continuous functions $G\to \C$ which are matrix coefficients of the left-regular representation $\lambda$: i.e., those functions of the form
\[
c_{\xi^\adjoint\otimes\zeta}: g\mapsto \langle \xi | \lambda(g)\zeta \rangle_{L^2(G)}
\]
for some $\xi,\zeta\in L^2(G)$. The duality pairing between an operator $t\in \rho(G)'$ and a  function $c_{\xi^\adjoint\otimes \zeta}\in A(G)$ is given explicitly by 
\[ 
\llangle c_{\xi^\adjoint\otimes\zeta} | t\rrangle =  \langle \xi | t \zeta \rangle_{L^2(G)}.
\]
See \cite{Eymard}. 

Putting $A=C^*(G)$ and $H=L^2(G)$ (on which $C^*(G)$ acts via $\rho$) in the discussion above, we find that the map
\begin{equation}\label{eq:Fourier-iso}
L^2(G)^\adjoint\otimes^{\h}_{C^*(G)} L^2(G) \to A(G),\qquad \xi^\adjoint\otimes\zeta\mapsto c_{\xi^\adjoint\otimes\zeta}
\end{equation}
is a completely isometric isomorphism of operator $\C$-coalgebras, where the coproduct on $A(G)$ is dual to the multiplication in $G$, and the counit is evaluation at the identity. Corollary \ref{cor:Effros-Ruan} then gives the following identification between operator $C^*(G)$-modules and operator $A(G)$-comodules:

\begin{corollary}\label{cor:G-compact}
For each compact group $G$ the functor 
\[
\opmod(C^*(G))\to \opcomod(A(G)),\qquad X\mapsto X \otimes^{\h}_{C^*(G)} L^2(G)
\]
is a completely isometric equivalence.\hfill\qed
\end{corollary}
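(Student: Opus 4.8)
The plan is to obtain this as a direct specialisation of Corollary~\ref{cor:Effros-Ruan}, taking $A=C^*(G)$ acting on $H=L^2(G)$ through the right regular representation $\rho$. The first task is therefore to verify that this places us within the hypotheses of that corollary, namely that $\rho$ exhibits $C^*(G)$ as a nondegenerate $C^*$-algebra of \emph{compact} operators on $L^2(G)$. Since a compact group is amenable, the canonical surjection $C^*(G)\to C^*_r(G)$ is an isomorphism, so $\rho$ is a faithful representation of $C^*(G)$ and we may regard $C^*(G)$ as a subalgebra of $\Bounded(L^2(G))$. By the Peter--Weyl theorem $L^2(G)\cong\bigoplus_{\pi\in\hat G} H_\pi\otimes\overline{H_\pi}$ with each irreducible $H_\pi$ finite-dimensional of dimension $d_\pi$, and under this decomposition $\rho(C^*(G))$ is the $c_0$-direct sum $\bigoplus_\pi M_{d_\pi}(\C)$ of its finite-dimensional blocks; in particular $\rho(C^*(G))\subseteq\Compact(L^2(G))$. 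Finally $\rho$ is nondegenerate, so $L^2(G)=\rho(C^*(G))\,L^2(G)$.

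With these points checked, Corollary~\ref{cor:Effros-Ruan} applies verbatim and yields a completely isometric equivalence
\[
\opmod(C^*(G))\to \opcomod\big((\rho(G)')_\dual\big),\qquad X\mapsto X\otimes^{\h}_{C^*(G)} L^2(G),
\]
the target being comodules over the predual coalgebra of the commutant $\rho(G)'$.

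It then remains to transport this equivalence along the coalgebra identification already established above. The discussion culminating in \eqref{eq:Fourier-iso} identifies $(\rho(G)')_\dual$ with the Fourier coalgebra $A(G)$ by a completely isometric isomorphism of operator $\C$-coalgebras. Any completely isometric isomorphism of operator coalgebras $\phi\colon C_1\to C_2$ induces a completely isometric equivalence $\opcomod(C_1)\cong\opcomod(C_2)$, sending $(Z,\delta_Z)$ to $(Z,(\id_Z\otimes\phi)\circ\delta_Z)$ and acting as the identity on underlying morphisms; composing this with the equivalence of the previous paragraph gives the asserted completely isometric equivalence onto $\opcomod(A(G))$.

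There is no serious obstacle here: the content of the corollary is entirely contained in Corollary~\ref{cor:Effros-Ruan} together with the coalgebra identification \eqref{eq:Fourier-iso}. The one genuine input is the classical fact, via Peter--Weyl, that the regular representation of a compact group takes values in the compact operators---this is precisely what places the example within the scope of Corollary~\ref{cor:Effros-Ruan}---and one should take momentary care to invoke amenability, so that $\rho$ is faithful on $C^*(G)$ and the inclusion $C^*(G)\subseteq\Compact(L^2(G))$ is literally valid.
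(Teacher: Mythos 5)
Your proposal is correct and follows essentially the same route as the paper: the paper's own (implicit) proof is exactly the specialisation of Corollary~\ref{cor:Effros-Ruan} to $A=C^*(G)$ acting via $\rho$ on $L^2(G)$, transported along the coalgebra isomorphism \eqref{eq:Fourier-iso}. Your explicit verification that $\rho$ is faithful (via amenability, $C^*(G)=C^*_r(G)$) and lands in the compact operators (via Peter--Weyl) merely spells out details the paper leaves tacit.
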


\begin{remark}
It would certainly be interesting to extend Corollary \ref{cor:G-compact} beyond the setting of compact groups. The identification \eqref{eq:Fourier-iso} is valid for every locally compact group $G$; but when $G$ is not compact the coproduct on $A(G)$ takes values in the \emph{extended} Haagerup tensor product, and so this case lies beyond the scope of Theorem \ref{thm:WEP}. It may be possible to remedy this by extending our framework to include coproducts and coactions taking values in spaces of multipliers, as is frequently done in the setting of $C^*$-algebraic quantum groups, and as is suggested in the present context by Daws in \cite[Section 9.3]{Daws}.
\end{remark}

\subsection{Subalgebras and flat connections}\label{subsec:connections}

In this  section we study nondegenerate inclusions of $C^*$-algebras $A\into B$ (nondegeneracy meaning that $B=AB=BA$). We consider the $B$-coalgebra $C=B\otimes^{\h}_A B$ associated to the adjoint pair of operator bimodules $({}_A B_B, {}_B B_A)$, with unit $\eta:A\to B\cong B\otimes^{\h}_B B$ the inclusion, and counit $B\otimes^{\h}_A B\to B$ the product. Considering $B$ as a Hilbert $C^*$-module over itself, with inner product $\langle b_1|b_2\rangle= b_1^* b_2$, puts us in the context of Section \ref{sec:descent-Cmod}. In this section we shall  adapt some algebraic observations of Nuss \cite{Nuss} and Brzezinski \cite{Brz-grouplike} to this $C^*$-algebraic setting to give a description of the $C^*$-comodule category  $\ccomod(C)$   in terms of flat connections on Hilbert $C^*$-modules over $B$. (A similar description applies to the category $\opcomod(C)$ of operator comodules. These categories may likewise be described in terms of \emph{descent data} and \emph{involutive twists}, as is done in the algebraic setting in \cite{Cipolla} and \cite{Nuss}.)

Let $\Omega=\Omega(B,A)\coloneqq \ker \epsilon\subseteq C$ denote the kernel of the multiplication map. This is the analogue in the operator-algebraic setting of the module of relative one-forms of \cite{Cuntz-Quillen-extensions}. The case where $A=\C$ has previously been considered by Mesland in \cite{Mesland}. Note that $\Omega$ is stable under the involution $(b_1\otimes b_2)^*\coloneqq b_2^*\otimes b_1^*$ on $C$.

We do not assume that the algebra $B$ has a unit; but of course $B$ can be embedded completely isometrically in a unital $C^*$-algebra $B^+$, and Theorem \ref{thm:exact} ensures that this embedding induces completely isometric embeddings of Haagerup tensor products $B\otimes^{\h}_A B\into B^+\otimes^{\h}_A B^+$ (and likewise for higher tensor powers). Note that the nondegeneracy of $B$ over $A$ ensures that elements of the form $b\otimes 1,1\otimes b\in B^+\otimes^{\h}_A B^+$ belong to the submodule $B\otimes^{\h}_A B$. 

We consider the completely bounded $A$-bimodule maps
\[
\begin{aligned}
&d:B\to \Omega,\quad  b\mapsto 1\otimes b - b\otimes 1 \quad \textrm{and} \\
& d^1:\Omega\to \Omega\otimes^{\h}_B \Omega,\quad  \omega\mapsto  q\circ(d\otimes d)\restrict_{\Omega}(\omega),
\end{aligned}
\]
where $q:\Omega\otimes^{\h}_A \Omega\to \Omega\otimes^{\h}_B \Omega$ is the quotient mapping.
These maps satisfy the Leibniz rules 
\[
\begin{aligned}
& d(bb')=d(b)b'+bd(b'),\\
& d^1(b\omega)=d(b)\otimes\omega + bd^1(\omega),\\
& d^1(\omega b) = d^1(\omega)b-\omega\otimes d(b)
\end{aligned}
\]
for all $b,b'\in B$ and all $\omega\in \Omega$, and one has $d^1\circ d=0$. Moreover, $d(b^*)=-d(b)^*$ and $d^1(\omega^*)=d^1(\omega)^*$ for all $b\in B$ and $\omega\in  \Omega$.
(This structure may be extended to a differential graded $*$-algebra, but we won't need to consider that here.) 

\begin{definition}\label{def:connection} (cf.~\cite[Section 8]{Cuntz-Quillen-extensions}, \cite[Section 5]{Mesland})
An \emph{$\Omega$-connection} on a right operator $B$-module $Z$ is a completely bounded $A$-linear map $\nabla:Z\to Z\otimes^{\h}_B \Omega$  satisfying
\[
\nabla(zb)=\nabla(z)b + z\otimes d(b)
\]
for all $b\in B$ and $z\in Z$. The \emph{curvature}  of a connection $\nabla$ is the composite map 
\[
Z\xrightarrow{\nabla} Z \otimes^{\h}_B \Omega \xrightarrow{\id_Z\otimes d^1 + \nabla\otimes \id_\Omega} Z\otimes^{\h}_B \Omega \otimes^{\h}_B\Omega,
\] 
and a \emph{flat} connection is one whose curvature is zero. If $Z$ is a right Hilbert $C^*$-module over $B$, then a connection $\nabla:Z\to Z\otimes^{\h}_B \Omega$ is called \emph{Hermitian} if it satisfies
\begin{equation}\label{eq:nabla-Hermitian}
\langle z_1 | \nabla(z_2)\rangle_{\Omega} - \langle z_2 | \nabla(z_1)\rangle_{\Omega}^* = d(\langle z_1|z_2\rangle)
\end{equation}
for all $z_1,z_2\in Z$, where we are using the pairing
\[
 \langle \cdot|\cdot\rangle_\Omega: Z\times (Z\otimes^{\h}_B \Omega)\to \Omega,\qquad \langle z_1| z_2\otimes\omega\rangle_{\Omega}\coloneqq \langle z_1|z_2\rangle \omega.
\]

Let $\conn (B,A)$ denote the category whose objects $(Z,\nabla_Z)$ are right Hilbert $C^*$-modules over $B$ equipped with flat, Hermitian $\Omega(B,A)$-connections, with morphisms $t:(W,\nabla_W)\to (Z,\nabla_Z)$   the adjointable maps of Hilbert $C^*$-modules satisfying $\nabla_Z\circ t = (t\otimes \id_\Omega)\circ \nabla_W$.  
\end{definition}

\begin{example}
Every countably generated Hilbert $C^*$-module $Z$ over $B$ may be equipped with a  Hermitian $\Omega(B,A)$-connection: the construction of  {Grassmann connections} in \cite[Proposition 8.1]{Cuntz-Quillen-extensions} and \cite[Corollary 5.15]{Mesland} carries over verbatim to this setting. 
\end{example} 
 
The category $\conn(B,A)$ is a $C^*$-category: this can be shown by a direct argument as in Lemma 5.3, but it also follows easily from the following equivalence between flat connections and comodules over $C=B\otimes^{\h}_A B$, which is proved just as in the algebraic case (cf.~\cite[Theorem 4.4]{Brz-grouplike}).

\begin{lemma}\label{lem:comod-conn} 
Let $(Z,\delta_Z)$ be a completely bounded Hilbert $C^*$-comodule over $C$. The map $\nabla_{\delta_Z}$ defined as the composition 
\[
 Z \xrightarrow{\delta_Z} Z\otimes^{\h}_B C \xrightarrow{\id_Z\otimes \theta} Z\otimes^{\h}_B\Omega,
 %\xrightarrow{ z\otimes c\mapsto z\otimes(c-\epsilon(c)\otimes 1)} Z\otimes^{\h}_B \Omega
\]
where $\theta(c)\coloneqq c-\epsilon(c)\otimes 1$, is a flat Hermitian connection. Conversely, if $\nabla_Z$ is a flat Hermitian $\Omega(B,A)$-connection on a right Hilbert $C^*$-module $Z$ over $B$, then the map $\delta_{\nabla_Z}$ defined by 
\[
 \delta_{\nabla_Z}: Z\to Z\otimes^{\h}_B C,\quad z\mapsto \nabla_Z(z) + z\otimes 1\otimes 1
\]
gives $Z$ the structure of a completely bounded Hilbert $C^*$-comodule over $C$. The functors
\[
\begin{aligned}
&\ccomod^{\cb}(C) \xrightarrow{ \functor{E}} \conn(B,A),\quad \functor{E}(Z,\delta_Z)= (Z,\nabla_{\delta_Z}),\quad \functor{E}=\id\textrm{ on morphisms}\\
&\conn(B,A)\xrightarrow{\functor{D}} \ccomod^{\cb}(C),\quad \functor{D}(Z,\nabla_Z) = (Z,\delta_{\nabla_Z}),\quad \functor{D}=\id\textrm{ on morphisms}
\end{aligned}
\]
are mutually inverse isomorphisms of categories.\hfill\qed
\end{lemma}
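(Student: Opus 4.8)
The plan is to reduce everything to the single structural fact that the counit $\epsilon\colon C\to B$ splits. Since $\epsilon$ is the multiplication map and $B=\overline{B^2}$, it is a completely contractive surjection of operator $B$-bimodules with kernel $\Omega$, and $s\colon B\to C$, $b\mapsto b\otimes 1$, is a completely bounded left-$B$-linear section. Applying $Z\otimes^{\h}_B(-)$ and invoking Theorem \ref{thm:exact} (with the roles of left and right modules interchanged, as in the proof of Proposition \ref{prop:exp}(a)), I obtain a completely bounded direct-sum decomposition $Z\otimes^{\h}_B C\cong Z\oplus(Z\otimes^{\h}_B\Omega)$, with projections $\epsilon_Z$ onto the first summand and $\id_Z\otimes\pi$ onto the second, where $\pi\colon C\to\Omega$, $c\mapsto c-\epsilon(c)\otimes 1$. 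Writing $g\coloneqq 1\otimes 1$ (a legitimate symbol: although $g\notin C$ when $B$ is nonunital, nondegeneracy makes $z\otimes g=z'\otimes(b\otimes 1)\in Z\otimes^{\h}_B C$ for $z=z'b$, and likewise $g\otimes_B\omega,\ \omega\otimes_B g\in C\otimes^{\h}_B C$), the decomposition reads $\delta_Z(z)=z\otimes g+\nabla(z)$ with $\nabla\coloneqq(\id_Z\otimes\pi)\circ\delta_Z$; complete boundedness passes back and forth between $\delta_Z$ and $\nabla$ because $\pi$ and $(-)\otimes g$ are completely bounded.

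With this dictionary in hand I would match the axioms one at a time. A completely bounded $B$-linear $\delta_Z$ obeying the counit axiom in \eqref{eq:comodule-axioms} corresponds exactly to the datum $\nabla$, and $\delta_Z(zb)=\delta_Z(z)b$ unwinds, via $gb=1\otimes b$ and $zb\otimes g=z\otimes(b\otimes 1)$, to the Leibniz rule $\nabla(zb)=\nabla(z)b+z\otimes d(b)$; right $A$-linearity of $\nabla$ is automatic since $\pi$ is right $A$-linear and $d|_A=0$. The heart of the argument is the coassociativity-versus-flatness step. For this I would first establish the coproduct formula for one-forms, $\delta(\omega)=g\otimes_B\omega+\omega\otimes_B g-d^1(\omega)$ for $\omega\in\Omega$, which follows from $\delta(b\otimes b')=(b\otimes 1)\otimes_B(1\otimes b')$, the identities $bg=b\otimes 1$, $gb=1\otimes b$, and the Leibniz rules for $d,d^1$ (it suffices to verify it on $\omega=a\,db$ and extend by the bimodule property, exactly as in the algebraic case). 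Substituting $\delta_Z(z)=z\otimes g+\nabla(z)$ into $(\id_Z\otimes\delta)\delta_Z$ and $(\delta_Z\otimes\id_C)\delta_Z$ and applying this formula, all $g$-terms telescope, leaving $(\id_Z\otimes\delta-\delta_Z\otimes\id_C)\delta_Z=\pm\bigl((\id_Z\otimes d^1)\nabla+(\nabla\otimes\id_\Omega)\nabla\bigr)$, i.e.\ $\pm$ the curvature of $\nabla$. Since $Z\otimes^{\h}_B\Omega\otimes^{\h}_B\Omega\hookrightarrow Z\otimes^{\h}_B C\otimes^{\h}_B C$ is a complete isometry (Theorem \ref{thm:exact}, applied twice), the left side vanishes iff the curvature does; thus $\delta_Z$ is coassociative iff $\nabla$ is flat. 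The one point needing attention here is the bookkeeping of signs in $d^1$ and in the curvature map of Definition \ref{def:connection}.

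It then remains to match the $*$-structures, which is the genuinely $C^*$-flavoured ingredient absent from Brzezinski's algebraic result. Substituting $\delta_Z(z)=z\otimes g+\nabla(z)$ into \eqref{eq:coaction-hermitian} and using $(b\otimes 1)^*=1\otimes b^*$ together with $\langle z_1|z_2\otimes g\rangle_C=\langle z_1|z_2\rangle\otimes 1$, the terms $1\otimes\langle z_1|z_2\rangle$ and $\langle z_1|z_2\rangle\otimes 1$ combine to $d(\langle z_1|z_2\rangle)$, so that \eqref{eq:coaction-hermitian} becomes precisely the Hermitian connection identity \eqref{eq:nabla-Hermitian}; this computation is reversible. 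Hence $\functor{E}$ and $\functor{D}$ are well defined on objects.

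Finally I would dispatch the morphism and inverseness claims, which are routine. Both functors act as the identity on underlying maps, and a completely bounded (respectively adjointable) $B$-module map $t$ intertwines the coactions iff it intertwines the connections, because the two sides of \eqref{eq:comodule-morphism} differ from the corresponding intertwining relation for $\nabla$ only by the term $t\otimes g$, which cancels by naturality. That $\functor{E}$ and $\functor{D}$ are mutually inverse is immediate from $\delta_Z(z)=z\otimes g+\nabla(z)$, since $\nabla_{\delta_Z}=(\id_Z\otimes\pi)\delta_Z$ and $\delta_{\nabla_Z}=(-)\otimes g+\nabla_Z$ are inverse constructions relative to the splitting. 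The main obstacle throughout is not conceptual but a matter of legitimising each manipulation of the grouplike $g$ and of the $\otimes^{\h}_B$-factors in the nonunital operator-module setting, for which Theorem \ref{thm:exact} (and the resulting complete isometries of iterated Haagerup tensor products) and the nondegeneracy of the modules are the essential tools.
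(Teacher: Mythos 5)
Your proof is correct, and it is essentially the argument the paper intends: the paper gives no written proof of Lemma \ref{lem:comod-conn}, deferring to the algebraic case (Brzezinski/Nuss), and your splitting of $\epsilon$ by the section $b\mapsto b\otimes 1$, the dictionary $\delta_Z(z)=z\otimes g+\nabla(z)$, the formula $\delta(\omega)=g\otimes\omega+\omega\otimes g-d^1(\omega)$ on $\Omega$, and the matching of \eqref{eq:coaction-hermitian} with \eqref{eq:nabla-Hermitian} are exactly that argument, carried out with the right care (nondegeneracy plus Theorem \ref{thm:exact}) in the nonunital operator setting.

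The one point you flagged but left open --- ``the bookkeeping of signs in $d^1$ and in the curvature map of Definition \ref{def:connection}'' --- can and should be settled, and it resolves in your favour. Your telescoping computation is correct as it stands: the coassociativity defect is
\[
(\id_Z\otimes\delta)\circ\delta_Z-(\delta_Z\otimes\id_C)\circ\delta_Z
=-\bigl((\id_Z\otimes d^1)+(\nabla\otimes\id_\Omega)\bigr)\circ\nabla,
\]
i.e.\ the \emph{sum}, whereas Definition \ref{def:connection} writes the curvature as the \emph{difference} $(\id_Z\otimes d^1-\nabla\otimes\id_\Omega)\circ\nabla$. The minus sign in Definition \ref{def:connection} is a typo, and the sum is the only possible reading: since $d^1$ is not left $B$-linear and $\nabla$ is not right $B$-linear (both satisfy Leibniz rules instead), neither $\id_Z\otimes d^1$ nor $\nabla\otimes\id_\Omega$ descends by itself to the balanced tensor product $Z\otimes^{\h}_B\Omega$. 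Evaluating on $zb\otimes\omega-z\otimes b\omega$, the first map contributes $-z\otimes d(b)\otimes\omega$ and the second contributes $z\otimes d(b)\otimes\omega$, so the Leibniz defects cancel for the sum but add to $-2\,z\otimes d(b)\otimes\omega$ for the difference; equivalently, only the sum defines a right $B$-linear curvature operator. The sum is also the classical convention (curvature $=\nabla^1\circ\nabla$, where $\nabla^1(z\otimes\omega)=\nabla(z)\otimes\omega+z\otimes d^1\omega$ is the Leibniz extension, as in Cuntz--Quillen and Mesland). With this correction your equivalence ``$\delta_Z$ coassociative iff $\nabla$ flat'' is exact, and the remaining steps of your proof (Leibniz rule $\Leftrightarrow$ right $B$-linearity of $\delta_Z$, the Hermitian identity, the morphism correspondence, and mutual inverseness) are all sound as written.
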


\begin{corollary}\label{cor:cmod-conn}
Let $A$ be a  $C^*$-algebra, embedded as a nondegenerate subalgebra of a $C^*$-algebra $B$, and suppose that there is a weak expectation $B\to A^{\dual\dual}$. Then the functors 
\[
\cmod(A)\to \conn(B,A),\qquad X\mapsto (X\otimes^{\h}_A B, \nabla(x\otimes b)\coloneqq x\otimes 1\otimes d(b))
\]
and
\[
\conn(B,A) \to \cmod(A),\qquad (Z,\nabla_Z)\mapsto \ker(\nabla_Z)
\]
are mutually inverse unitary equivalences of $C^*$-categories.
\end{corollary}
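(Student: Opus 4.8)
The plan is to obtain the asserted equivalences by composing Theorem~\ref{thm:Hermitian} with the category isomorphism of Lemma~\ref{lem:comod-conn}. We are in the situation of Example~\ref{ex:subalg}, which (as noted there) is an instance of Example~\ref{ex:corresp} with $F={}_A B_B$: the algebra $B$ is a Hilbert $C^*$-module over itself via $\langle b_1|b_2\rangle=b_1^*b_2$, the action of $A$ is left multiplication through the identification $\Compact_B(B)=B$, the adjoint bimodule $F^\adjoint$ is ${}_B B_A$ (via $b\mapsto(b^*)^\adjoint$), and the associated coalgebra is $C=F^\adjoint\otimes^{\h}_A F\cong B\otimes^{\h}_A B$, exactly as in Section~\ref{subsec:connections}.

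First I would verify the hypotheses of Theorem~\ref{thm:Hermitian}. The action homomorphism $A\to\Compact_B(B)=B$ is the inclusion $\eta\colon A\into B$, which is injective; and since $B=AB$ the module $B$ is faithful over $A$, so $A$ acts faithfully by $B$-compact operators. The required weak expectation $\Compact_B(B)=B\to A^{\dual\dual}$ is precisely the one assumed to exist. Theorem~\ref{thm:Hermitian} then shows that $\functor{F}_C\colon\cmod(A)\to\ccomod(C)$ is a unitary equivalence of $C^*$-categories, while Corollary~\ref{cor:coaction-cb-cc} gives $\ccomod(C)=\ccomod^{\cb}(C)$. Composing with the isomorphism $\functor{E}\colon\ccomod^{\cb}(C)\xrightarrow{\cong}\conn(B,A)$ of Lemma~\ref{lem:comod-conn}, which is the identity on morphisms and hence $*$-preserving (so unitary, both categories being $C^*$-categories by Corollary~\ref{cor:Cstar-cat}), yields a unitary equivalence of $C^*$-categories $\functor{E}\circ\functor{F}_C\colon\cmod(A)\to\conn(B,A)$, with inverse $\functor{F}_C^\adjoint\circ\functor{D}$.

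It then remains to identify these composites with the functors in the statement. For the forward functor, $\functor{F}_C(X)=(X\otimes^{\h}_A B,\delta)$ with $\delta(x\otimes b)=(x\otimes 1)\otimes_B(1\otimes b)$ (using $\eta_X(x)=x\otimes 1$ and $\eta(a)=a$); applying $\functor{E}$, whose effect is $z\otimes c\mapsto z\otimes(c-\epsilon(c)\otimes 1)$, and noting that $1\otimes b-\epsilon(1\otimes b)\otimes 1=1\otimes b-b\otimes 1=d(b)$, produces exactly $\nabla(x\otimes b)=x\otimes 1\otimes d(b)$. For the inverse, $\functor{D}(Z,\nabla_Z)=(Z,\delta_{\nabla_Z})$ with $\delta_{\nabla_Z}(z)=\nabla_Z(z)+z\otimes 1\otimes 1$, and I would compute $Z\boxtimes_C F^\adjoint$ from Definition/Lemma~\ref{lem:RC} under the identifications $F^\adjoint\cong{}_B B_A$ and $Z\otimes^{\h}_B B\cong Z$. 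Unwinding $\eta_{F^\adjoint}=\eta_R$ (which sends $r\mapsto r\otimes 1\in C$), the two maps $\id_Z\otimes\eta_{F^\adjoint}$ and $\delta_{\nabla_Z}\otimes\id_{F^\adjoint}$ collapse to $z\mapsto z\otimes(1\otimes 1)$ and $z\mapsto\delta_{\nabla_Z}(z)$ respectively, so that
\[
Z\boxtimes_C F^\adjoint=\{\,z\in Z : \delta_{\nabla_Z}(z)=z\otimes(1\otimes 1)\,\}=\{\,z\in Z : \nabla_Z(z)=0\,\}=\ker(\nabla_Z).
\]
By Lemma~\ref{lem:boxtimes-ip} the $A$-valued inner product on $Z\boxtimes_C F^\adjoint$ is the restriction of the $B$-valued inner product of $Z$ (transported through $\eta^{-1}$), which is exactly the intended Hilbert $A$-module structure on $\ker(\nabla_Z)$.

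The main obstacle—really the only non-formal step—is this careful unwinding of the kernel defining $Z\boxtimes_C F^\adjoint$ through the identifications $F=B$, $F^\adjoint\cong{}_B B_A$, $\Compact_B(B)=B$ and $Z\otimes^{\h}_B B\cong Z$, so as to match it on the nose with $\ker(\nabla_Z)$; once these identifications are pinned down the computation is immediate. All of the analytic content—the equivalence itself, the automatic complete contractivity of the coactions, and the promotion of isometric module isomorphisms to unitaries—has already been packaged into Theorem~\ref{thm:Hermitian}, Corollary~\ref{cor:coaction-cb-cc}, and Lemma~\ref{lem:comod-conn}.
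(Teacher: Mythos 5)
Your proposal is correct and takes essentially the same route as the paper: the paper's own proof likewise invokes Corollary~\ref{cor:coaction-cb-cc} to identify $\ccomod^{\cb}(C)$ with $\ccomod(C)$, observes that the two stated functors are the compositions $\functor{E}\circ\functor{F}_C$ and $\functor{F}^\adjoint_C\circ\functor{D}$, and concludes by Theorem~\ref{thm:Hermitian} and Lemma~\ref{lem:comod-conn}. The only difference is one of detail: the explicit unwinding of $\functor{F}_C$ into the stated connection functor and of $Z\boxtimes_C F^\adjoint$ into $\ker(\nabla_Z)$, which you carry out, is asserted without proof in the paper.
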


\begin{proof}
First note that Corollary \ref{cor:coaction-cb-cc} gives an equality $\ccomod^{\cb}(C)=\ccomod(C)$. Now the functors in question are equal, respectively, to the compositions
\[
\cmod(A) \xrightarrow{\functor{F}_C}  \ccomod(C) \xrightarrow{\functor{E}} \conn(B,A)\ \quad \textrm{and}\quad \conn(B,A) \xrightarrow{\functor{D}} \ccomod(C)   \xrightarrow{\functor{F}^*_C} \cmod(A) 
\]
and so the corollary follows from Theorem \ref{thm:Hermitian} and Lemma \ref{lem:comod-conn}.
\end{proof}

\begin{example}
Setting $A=\C$ we find that for a unital $C^*$-algebra $B$, the Hilbert $C^*$-modules over $B$  admitting a flat, Hermitian $\Omega(B,\C)$-connection are precisely the `free modules' $B^{\oplus n}$ (where $n$ may be infinite). 
\end{example}

\begin{remarks}\begin{enumerate}[\rm(a)]
\item Once again (cf.~Remark \ref{rem:bimodule-cstar}) there is a bimodule version of Corollary \ref{cor:cmod-conn}, relating $C^*$-correspondences ${}_D E_A$ to  flat, $D$-linear, Hermitian connections on $C^*$-correspondences ${}_D F_B$, for each $C^*$-algebra $D$.   
\item It follows from Corollary \ref{cor:cmod-conn} that the $K$-theory of $A$ can be described in terms of the category $\conn(B,A)$, in such a way that the map $K_*(A)\to K_*(B)$ induced by the inclusion $A\to B$ corresponds to the one induced by the forgetful functor $\conn(B,A)\to\cmod(B)$. 
\end{enumerate}
\end{remarks}

\subsection{The maximal ${C^*}$-dilation}\label{subsec:Cstarmax}

For our final example  we consider the descent problem for the inclusion $A\into C^*  A$ of a not-necessarily-self-adjoint operator algebra $A$ into its \emph{maximal $C^*$-algebra}. This embedding is characterised by the   property that if $D$ is a $C^*$-algebra generated by a completely isometrically embedded copy of $A$, then the identity map on $A$ extends uniquely to a $*$-homomorphism $C^*  A\to D$. If $A$ is a $C^*$-algebra then $C^* A=A$ and the inclusion is the identity. See \cite{Blecher-cstarmax}. 

The passage from (modules over) $A$ to (modules over) $C^*  A$ was used in \cite{Blecher-Morita} and \cite{Blecher-Solel} as a way to relate the representation theory of non-self-adjoint operator algebras to the better understood $C^*$-algebra theory. 
Given an operator algebra $A$, one considers the \emph{maximal $C^*$-dilation} functor
\[
\functor{L}:\opmod(A) \to \opmod(C^*  A),\qquad X\mapsto X\otimes^{\h}_A C^*  A,
\]
which is left-adjoint to the forgetful functor $\functor{R}:\opmod(C^*  A)\to \opmod(A)$. This is the pair of functors corresponding to the adjoint pair of operator bimodules $({}_A C^*  A_{C^*_{\csmax }A}, {}_{C^*  A} C^*  A_ A)$ (cf.~Example \ref{ex:subalg}). Letting $C$ denote the associated operator $C^*  A$-coalgebra $C^* A\otimes^{\h}_A C^* A$, we have a comparison functor
\[
\functor{L}_C:\opmod(A)\to \opcomod(C), \qquad X\mapsto (X\otimes^{\h}_A C^*  A, \eta_X\otimes \id_{C^*  A}).
\]
Theorems of Beck and of Blecher imply that $\functor{L}_C$ is completely isometrically fully faithful: 

\begin{proposition}\label{prop:cstarmax-fully-faithful}
For each pair of operator $A$-modules $X,Y\in \opmod(A)$ the natural map
\[
\functor{L}_C: \CB_A(X,Y) \to \CB_C(\functor{L}_C(X),\functor{L}_C(Y))
\]
is a completely isometric isomorphism.
\end{proposition}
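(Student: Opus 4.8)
The plan is to derive the proposition from the completely isometric adjunction $\functor{L}\dashv\functor{R}$ of Section~\ref{subsec:adjoint-pairs} together with Beck's comparison machinery, reducing everything to a single assertion about the unit. Write $B\coloneqq C^*_{\csmax} A$, so that $L=R=B$, $K\cong B$, $C=B\otimes^{\h}_A B$, and $\eta\colon A\to B$ is the (completely isometric) inclusion. Under the completely isometric adjunction isomorphism $\CB_B(\functor{L}X,\functor{L}Y)\cong\CB_A(X,Y\otimes^{\h}_A B)$, the map $\functor{L}_C$ corresponds to post-composition $t\mapsto\eta_Y\circ t$, while the subspace $\CB_C(\functor{L}_CX,\functor{L}_CY)$ of comodule maps corresponds exactly to those $\psi\colon X\to Y\otimes^{\h}_A B$ whose image lies in the equalizer $\functor{R}_C\functor{L}_C(Y)$ of Definition/Lemma~\ref{lem:RC}, namely in $\{\,v\in Y\otimes^{\h}_A B : v\otimes 1 = (\eta_Y\otimes\id_B)(v)\text{ in }Y\otimes^{\h}_A B\otimes^{\h}_A B\,\}$. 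Thus the proposition is equivalent to the statement that the unit $\eta_Y\colon Y\to\functor{R}_C\functor{L}_C(Y)$ is a completely isometric isomorphism for every $Y$.

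The isometric half is supplied by Blecher's theorem. First I would show $\eta_Y\colon Y\to Y\otimes^{\h}_A B$ is a complete isometry: represent $Y$ completely isometrically as a concrete nondegenerate operator $A$-module, so that the $A$-action is implemented by a completely contractive representation of $A$; by the universal property characterising the maximal $C^*$-algebra (\cite{Blecher-cstarmax}) this representation extends to a $*$-representation of $B$, and Haagerup-tensoring the module embedding with this extension yields a completely contractive left inverse to $\eta_Y$ at each matrix level. This already shows that $\functor{L}_C$ is a complete isometry on morphism spaces, settling faithfulness and the norm statement; it also exhibits $\eta_Y$ as a complete isometry (a regular monomorphism, cf.~Lemma~\ref{lem:ker-coker}) onto a closed submodule of $Y\otimes^{\h}_A B$.

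It remains to prove fullness, namely that $\eta_Y$ \emph{surjects} onto the equalizer $\functor{R}_C\functor{L}_C(Y)$; this I expect to be the main obstacle. The soft tools used elsewhere in the paper are unavailable here: because $A$ is non-self-adjoint it fails the weak expectation property, so neither the Anantharaman-Delaroche--Pop exactness of Theorem~\ref{thm:exact} (valid only over $C^*$-algebras) nor the duality argument of Lemma~\ref{lem:WEP-heart} applies, and there is no left-$A$-linear splitting $B\to A$, so the relevant equalizer is \emph{not} an absolute (split) one. The structural input from Beck's framework that does survive is that $\functor{L}\eta_Y=\eta_Y\otimes\id_B$ is a completely contractive split monomorphism, split by $\id_Y\otimes\epsilon$ with $\epsilon\colon C\to B$ the counit (multiplication); equivalently, the cofree comodules $\functor{L}_C(Y)$ are injective in $\opcomod(C)$.

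To close the gap I would exploit the explicit description of $C$ recorded in Example~\ref{ex:subalg}: by \cite{CES,Pisier,Ozawa} the modules $C=B\otimes^{\h}_A B$ and $C\otimes^{\h}_B C=B\otimes^{\h}_A B\otimes^{\h}_A B$ are the closed linear spans of the reduced products $b_1* b_2$ and $b_1* b_2* b_3$ in the amalgamated free products $B*_A B$ and $B*_A B*_A B$. Tensoring with $Y$ over $A$ and rewriting the equalizer equation $(\eta_Y\otimes\id_B)(v)=v\otimes 1$ in this free-product picture, the uniqueness of reduced forms forces any solution $v$ to lie in $Y\otimes 1=\image(\eta_Y)$. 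This last step---where the absence of a weak expectation is circumvented by the concrete free-product combinatorics rather than by the exactness or duality arguments used elsewhere in the paper---is the crux of the proof.
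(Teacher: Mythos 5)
Your reduction of the proposition to the assertion that the unit $\eta_Y\colon Y\to\functor{R}_C\functor{L}_C(Y)$ is a completely isometric isomorphism onto the equalizer is exactly the paper's strategy, and your isometric half is sound: the concrete-representation-plus-universal-property argument is in essence a proof of \cite[Corollary 3.11]{Blecher-cstarmax}, which is what the paper cites. The gap is in your fullness step. Writing $B=C^*_{\csmax}A$, the free-product picture you propose to use is not available here: Example \ref{ex:subalg} identifies $B\otimes^{\h}_A B$ with the closed span of reduced words in $B*_A B$ only under the hypothesis that \emph{both} $A$ and $B$ are $C^*$-algebras, and the results of \cite{CES}, \cite{Pisier} and \cite{Ozawa} invoked there are theorems about amalgamation over a $C^*$-subalgebra; their proofs run through representation/dilation theorems for completely bounded $A$-balanced maps which require $A$ to be self-adjoint. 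In Section \ref{subsec:Cstarmax} the algebra $A$ is non-self-adjoint by assumption, so these results cannot be quoted, and it is unclear that any analogue of them holds. Moreover, even granting some free-product embedding, ``uniqueness of reduced forms'' is an argument about finite sums of elementary tensors: a general element $v\in Y\otimes^{\h}_A B$ is an infinite sum with row/column convergence, and to conclude that a solution of the equalizer equation lies in $\image(\eta_Y)$ you would need a completely bounded projection, or a separating family of functionals, detecting the shape of a word --- precisely the expectation-type structure whose absence you yourself point out. So the step you identify as the crux does not go through.

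What your proposal misses is that \cite[Corollary 3.11]{Blecher-cstarmax} is stronger than the single instance you use: it asserts that $\eta_W\colon W\to W\otimes^{\h}_A B$ is a complete isometry for \emph{every} nondegenerate operator $A$-module $W$, and this universal statement substitutes for the missing Anantharaman-Delaroche--Pop exactness in Beck's argument. That is the paper's route: one reruns the proof of Proposition \ref{prop:exp}(a) with $K\cong B$ and the quotient $q\colon B\to B/A$ of $A$-bimodules. Corollary 3.11 applied to $W=X$ shows that $\image(\eta_X)$ is closed, so the soft argument from the proof of Theorem \ref{thm:exact}(c) --- which uses only the universal property of $\otimes^{\h}_A$ for balanced bilinear maps, not the self-adjointness of $A$ --- yields $\image(\eta_X)=\ker(\id_X\otimes q)$; and Corollary 3.11 applied to $W=X\otimes^{\h}_A(B/A)$ gives the injectivity of $\id_X\otimes\eta_{B/A}=\eta_{X\otimes^{\h}_A(B/A)}$ needed to show that the equalizer $\ker(\id_X\otimes\eta_{B}-\eta_X\otimes\id_{B})$ is contained in $\ker(\id_X\otimes q)=\image(\eta_X)$. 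Once $\eta_X(X)$ is identified with the equalizer for every $X$, naturality shows that any comodule map $s$ restricts to a map $\eta_X(X)\to\eta_Y(Y)$, and a direct computation gives $s=t\otimes\id_{B}$ with $t=\eta_Y^{-1}\circ s\circ\eta_X$, which is the desired fullness.
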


\begin{proof}
The functor $\functor{L}_C$ is given on morphisms by a Haagerup tensor product, so it is completely contractive. Blecher showed in \cite[Corollary 3.11]{Blecher-cstarmax} that for each $X\in \opmod(A)$ the natural map 
\[
\eta_X:X\to X\otimes^{\h}_A C^*  A,\qquad xa\mapsto x\otimes a
\]
is a completely isometric embedding, and since for each $t\in \CB_A(X,Y)$ the diagram
\[
\xymatrix@C=80pt{
X \ar[r]^-{t} \ar[d]_-{\eta_X} & Y \ar[d]^-{\eta_Y} \\
X\otimes^{\h}_A C^*  A \ar[r]^-{\functor{L}_C(t) = t\otimes \id_{C^*  A}} & Y\otimes^{\h}_A C^*  A
}
\]
commutes, this implies that the map $t\mapsto \functor{L}_C(t)$ is in fact a complete isometry.

The proof that the map $\functor{L}_C$ is surjective is a special case of an argument due to Beck (cf.~\cite[Theorem 3.13]{ttt}). Using \cite[Corollary 3.11]{Blecher-cstarmax} once again, one shows as in the proof of Proposition \ref{prop:exp}(a)  that 
\[
\eta_X(X) =  \left\{\left. \xi\in X\otimes^{\h}_A C^*  A \ \right| \ (\id_X\otimes \eta_{C^*  A})(\xi) = (\eta_X\otimes \id_{C^*  A})(\xi)\right\}
%\ker \left( X\otimes^{\h}_A C^*  A \xrightarrow{\id_X\otimes \eta_{C^*  A} - \eta_X\otimes \id_{C^* A}} X\otimes^{\h}_A C^*  A \otimes^{\h}_A C^*  A\right) 
\]
for every $X\in \opmod(A)$. Using this, and the naturality of $\eta$, one shows that each comodule map $s\in \CB_C(\functor{L}_C(X),\functor{L}_C(Y))$ restricts to a map $\eta_X(X)\to \eta_Y(Y)$. A simple computation then shows that $s=t\otimes \id_{C^* A}$ for the map $t\coloneqq \eta_Y^{-1}\circ s \circ \eta_X\in \CB_A(X,Y)$.
\end{proof}

Proposition \ref{prop:cstarmax-fully-faithful} implies that the category $\opmod(A)$ can be identified, completely isometrically, with a full subcategory of $\opcomod(C)$. In the interests of clarifying the relationship between the representation theories of $A$ and of $C^*  A$, it would be useful to characterise this subcategory. In this section we shall prove a first result in this direction: if $A$ is not a $C^*$-algebra then the subcategory in question is a \emph{proper} subcategory of $\opcomod(C)$.

\begin{theorem}\label{thm:Cstar-max}
Let $A$ be a locally unital operator algebra. Then $A$ is a $C^*$-algebra if and only if the comparison functor $\functor{L}_C:\opmod(A)\to \opcomod(C)$ for the operator $C^*  A$-coalgebra $C=C^* A\otimes^{\h}_A C^* A$ is  a completely isometric equivalence.
\end{theorem}

We shall prove Theorem \ref{thm:Cstar-max} as a corollary of the following general property of operator coalgebras over $C^*$-algebras. Recall from Section \ref{subsec:opsp1} that for an operator $B$-coalgebra $C$, we denote by $\opcomod(C)_1$ and $\opmod(B)_1$ the subcategories of completely contractive maps in $\opcomod(C)$ and $\opmod(B)$ (respectively). Let us say   that a pair of morphisms $(i,q)$ is a \emph{kernel-cokernel pair} if $i$ is a kernel of $q$ and $q$ is a cokernel of $i$.

\begin{lemma} \label{lem:F-exact}
Let $B$ be a $C^*$-algebra and let $C$ be an operator $B$-coalgebra. Then the forgetful functors $\functor{F}:\opcomod(C)\to \opmod(B)$ and $\functor{F}_1:\opcomod(C)_1\to \opmod(B)_1$ preserve kernel-cokernel pairs. 
\end{lemma}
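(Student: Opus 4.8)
The plan is to verify the two halves of a kernel-cokernel pair separately, in each case by constructing the relevant universal object directly in $\opcomod(C)$ on the underlying $B$-module and then invoking uniqueness. Write $(i,q)$, with $i\colon(I,\delta_I)\to(Z,\delta_Z)$ and $q\colon(Z,\delta_Z)\to(Q,\delta_Q)$, for a kernel-cokernel pair in $\opcomod(C)$; the argument for $\opcomod(C)_1$ is identical, reading ``complete isometry'' for ``complete embedding'' and ``completely isometric isomorphism'' for ``completely bounded isomorphism'' throughout, since all the tensored maps supplied by Theorem \ref{thm:exact} are completely contractive, so the structure maps constructed below remain complete contractions.

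First I would treat the cokernel. On the $B$-module quotient $Q_0\coloneqq Z/\overline{\image \functor{F}(i)}$, with quotient map $q_0\colon Z\to Q_0$, the composite $(q_0\otimes\id_C)\circ\delta_Z$ annihilates $\overline{\image \functor{F}(i)}$ (because $i$ is a comodule map and $q_0\circ \functor{F}(i)=0$), so by the universal property of the complete quotient map $q_0$ it factors as $\delta_{Q_0}\circ q_0$ for a completely contractive $\delta_{Q_0}\colon Q_0\to Q_0\otimes^{\h}_B C$. Since $q_0$, $q_0\otimes\id_C$ and $q_0\otimes\id_C\otimes\id_C$ are epimorphisms (Theorem \ref{thm:exact}(b)), the comodule axioms \eqref{eq:comodule-axioms} for $\delta_{Q_0}$ follow from those for $\delta_Z$ together with the naturality \eqref{eq:natural} of $\epsilon$, and $q_0$ is tautologically a comodule map; a routine check confirms that $q_0$ is a cokernel of $i$ in $\opcomod(C)$. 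As cokernels are unique up to isomorphism, and every isomorphism of $\opcomod(C)$ is in particular a completely bounded $B$-module isomorphism, $\functor{F}(q)$ differs from $q_0$ by such an isomorphism and is therefore a cokernel of $\functor{F}(i)$ in $\opmod(B)$ by Lemma \ref{lem:ker-coker}.

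For the kernel I would exploit what the previous paragraph buys us: $\functor{F}(q)$ is, up to a completely bounded $B$-module isomorphism, the complete quotient map $Z\to Z/\ker \functor{F}(q)$. Writing $j\colon \ker\functor{F}(q)\into Z$ for the inclusion, Theorem \ref{thm:exact}(a),(c) then give that $j\otimes\id_C$ is a complete isometry onto $\ker\bigl(\functor{F}(q)\otimes\id_C\bigr)$. Because $q$ is a comodule map, $\delta_Z\circ j$ takes values in this kernel, so corestriction along $j\otimes\id_C$ defines a completely contractive $\delta_{I_0}\colon \ker\functor{F}(q)\to \bigl(\ker\functor{F}(q)\bigr)\otimes^{\h}_B C$ making $j$ a comodule map; injectivity of $j\otimes\id_C$ forces both the comodule axioms and the required universal property, so $j$ is a kernel of $q$ in $\opcomod(C)$. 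By uniqueness of kernels, $\functor{F}(i)$ is isomorphic to $j$ through a completely bounded $B$-module isomorphism, and is hence a kernel of $\functor{F}(q)$ in $\opmod(B)$ by Lemma \ref{lem:ker-coker}. Combining the two paragraphs, $\bigl(\functor{F}(i),\functor{F}(q)\bigr)$ is a kernel-cokernel pair in $\opmod(B)$, as required.

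The point I expect to be the crux—and the reason the hypothesis must be a kernel-cokernel \emph{pair} rather than merely a kernel—is that $\otimes^{\h}_B C$ does not preserve the kernel of an arbitrary morphism; this is precisely the phenomenon of the Remark following Theorem \ref{thm:exact}, and it means the forgetful functor does not preserve general kernels. The whole argument turns on the fact that, in a kernel-cokernel pair, $q$ is a cokernel and is therefore a genuine complete quotient map, which is exactly the situation in which Theorem \ref{thm:exact}(c) identifies $\ker\bigl(\functor{F}(q)\otimes\id_C\bigr)$ with $\image(j\otimes\id_C)$. In other words, the cokernel half of the hypothesis is what makes the kernel half transport correctly, so establishing first that $q$ is a quotient map is the decisive step.
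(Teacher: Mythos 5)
Your proof is correct, and its kernel half is essentially the paper's own argument: both hinge on first establishing that $\functor{F}(q)$ is a cokernel in $\opmod(B)$, hence by Lemma \ref{lem:ker-coker} a complete quotient map up to isomorphism, so that Theorem \ref{thm:exact} identifies $\ker\bigl(\functor{F}(q)\otimes\id_C\bigr)$ with the image of the complete isometry $j\otimes\id_C$, allowing $\delta_Z$ to corestrict to a comodule structure on $\ker\functor{F}(q)$; uniqueness of kernels in $\opcomod(C)$ then transports the conclusion back to $i$. Your closing diagnosis of the crux---that the cokernel hypothesis is exactly what makes part (c) of Theorem \ref{thm:exact} applicable to the kernel half---matches the paper precisely. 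Where you genuinely diverge is the cokernel half: the paper disposes of it in one line by noting that the forgetful functor has a right adjoint, namely the cofree comodule functor $Y\mapsto (Y\otimes^{\h}_B C,\id_Y\otimes\delta)$, and left adjoints preserve cokernels; you instead build the quotient comodule by hand, factoring $(q_0\otimes\id_C)\circ\delta_Z$ through the complete quotient map $q_0\colon Z\to Z/\overline{\image\functor{F}(i)}$ and verifying the axioms and universal property by epimorphism chasing. Your route costs a paragraph of routine verification that the adjunction argument avoids, but it is self-contained and makes visible that no special hypothesis on $B$ or on the pair is needed for the cokernel direction (the $C^*$-hypothesis enters only through Theorem \ref{thm:exact} in the kernel direction)---a point the paper's categorical shortcut leaves implicit. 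Two small imprecisions, neither a gap: verifying coassociativity of the corestricted coaction on $\ker\functor{F}(q)$ requires injectivity of $j\otimes\id_C\otimes\id_C$, not merely of $j\otimes\id_C$ (this follows from the same Theorem \ref{thm:exact}(a), applied twice); and in checking the axioms \eqref{eq:comodule-axioms} for $\delta_{Q_0}$ it is the surjectivity of $q_0$ alone that one cancels against, the tensored epimorphisms being unnecessary.
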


\begin{proof}
We shall present the proof for  $\functor{F}_1$, from which the proof for $\functor{F}$ differs only in notation. Let $i:(I,\delta_I)\to (Z,\delta_Z)$ and $q:(Z,\delta_Z)\to (Q,\delta_Q)$ be a kernel-cokernel pair in $\opcomod(C)_1$. The functor $\functor{F}_1$ has a right adjoint---namely, the `free comodule' functor $Y\mapsto (Y\otimes^{\h}_B C, \id_Y\otimes \delta)$---and so $\functor{F}_1$ preserves cokernels \cite[V.5]{MacLane}. Therefore $\functor{F}_1(q)$ is a cokernel of $\functor{F}_1(i)$ in $\opmod(B)_1$. 

To prove that $\functor{F}_1(i)$ is a kernel of $\functor{F}_1(q)$ in $\opmod(B)_1$, we will show that the map $q$ has a kernel $j:(J,\delta_J)\to (Z,\delta_Z)$ in $\opcomod(C)_1$ such that $\functor{F}_1(j)$ is a kernel of $\functor{F}_1(q)$ in $\opmod(B)_1$. The uniqueness of kernels in $\opcomod(C)_1$ will then imply that $i$ and $j$ are conjugate via an $\opcomod(C)_1$ isomorphism $h:(I,\delta_I)\to (J,\delta_J)$, hence that $\functor{F}_1(i)$ and $\functor{F}_1(j)$ are conjugate via the $\opmod(B)_1$ isomorphism $\functor{F}_1(h)$, and hence that $\functor{F}_1(i)$ is a kernel of $\functor{F}_1(q)$ in $\opmod(B)_1$ as required.

Let $J$ denote the operator $B$-submodule $\ker(q)\subseteq Z$, and let $j:J\to Z$ denote the inclusion map. Note that $j$ is a kernel of $q$ in $\opmod(B)_1$, by Lemma \ref{lem:ker-coker}. 

Because $B$ is a $C^*$-algebra, Theorem \ref{thm:exact} implies that the map $j\otimes \id_C:J\otimes^{\h}_B C\to Z\otimes^{\h}_B C$ is a completely isometric embedding, which we shall use to regard $J\otimes^{\h}_B C$ as a closed submodule of $Z\otimes^{\h}_B C$. Since $q$ is a map of $C$-comodules, the diagram
\[
\xymatrix@C=40pt{
J \ar[r]^-{\delta_Z|_J} \ar[d]_-{q|_J=0} & Z\otimes^{\h}_B C \ar[d]^-{q\otimes \id_C} \\
Q \ar[r]^-{\delta_Q} & Q\otimes^{\h}_B C
}
\]
commutes, showing that $\delta_Z (J)\subseteq \ker(q\otimes \id_C)$. Since the map $q:Z\to Q$ is a cokernel in $\opmod(B)_1$, Lemma  \ref{lem:ker-coker} and Theorem \ref{thm:exact} give $\ker(q\otimes \id_C)=\image(j\otimes \id_C)=J\otimes^{\h}_B C$. We may therefore regard $\delta_Z\restrict_J$ as a completely contractive map $J\to J\otimes^{\h}_B C$, furnishing $J$ with the structure of a $C$-comodule such that the inclusion $j:J\to Z$ is a map of comodules. 

It remains to check that $j:(J,\delta_Z\restrict_J)\to (Z,\delta_Z)$ is a kernel of $q$ in $\opcomod(C)_1$, which is straightforward: if $f:(W,\delta_W)\to (Z, \delta_Z)$ is a morphism in $\opcomod(C)_1$ with $q\circ f=0$, then the image of $f$ must lie in $J=\ker(q)$, and the   restricted map $f:V\to J$ is still a morphism in $\opcomod(C)_1$.
\end{proof}
 
\begin{corollary}\label{cor:L-exact}
Let $({}_A L_B, {}_B R_A)$ be an adjoint pair of operator bimodules, where $B$ is a $C^*$-algebra, and let $C=L\otimes^{\h}_A R$ be the associated $B$-coalgebra. If the comparison functor $\functor{L}_C:\opmod(A)\to \opcomod(C)$ is a completely isometric equivalence, then the functor $\functor{L}:\opmod(A)\to \opmod(B)$ preserves complete isometries. 
\end{corollary}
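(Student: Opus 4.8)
The plan is to recognise a complete isometry as one half of a kernel--cokernel pair, and then to transport such pairs first through the equivalence $\functor{L}_C$ and then through the forgetful functor, invoking Lemmas \ref{lem:ker-coker} and \ref{lem:F-exact}. I would work throughout in the subcategories of complete contractions, using the factorisation $\functor{L}=\functor{F}\circ\functor{L}_C$ coming from the commuting triangle \eqref{eq:comparison-diagram}, where $\functor{F}$ is the forgetful functor; restricting to complete contractions this reads $\functor{L}_1=\functor{F}_1\circ(\functor{L}_C)_1$ as functors $\opmod(A)_1\to\opmod(B)_1$.

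First I would take a complete isometry $u\colon X\to Y$ in $\opmod(A)$, which is in particular a morphism of $\opmod(A)_1$, and form the quotient map $q\colon Y\to Y/\image(u)$. Since $u$ is a complete isometry its range is closed, so $u$ is a completely isometric isomorphism onto $\ker(q)$; Lemma \ref{lem:ker-coker} then tells me that $q$ is a cokernel of $u$ and, after conjugating by this isomorphism, that $u$ is a kernel of $q$ in $\opmod(A)_1$. In other words $(u,q)$ is a kernel--cokernel pair in $\opmod(A)_1$.

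Next I would push this pair forward. Because $\functor{L}_C$ is a completely isometric equivalence, its restriction $(\functor{L}_C)_1\colon\opmod(A)_1\to\opcomod(C)_1$ is an equivalence of categories (cf.\ the remark after Definition \ref{def:OS1-cats}), and an equivalence preserves zero objects, zero morphisms and the universal properties defining kernels and cokernels, hence carries kernel--cokernel pairs to kernel--cokernel pairs. Applying then Lemma \ref{lem:F-exact} to the forgetful functor $\functor{F}_1$, I find that $\big(\functor{L}(u),\functor{L}(q)\big)=\big(\functor{F}_1(\functor{L}_C)_1(u),\functor{F}_1(\functor{L}_C)_1(q)\big)$ is again a kernel--cokernel pair, now in $\opmod(B)_1$. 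In particular $\functor{L}(u)=u\otimes\id_L$ is a kernel in $\opmod(B)_1$, and so, by the final assertion of Lemma \ref{lem:ker-coker}, it is a complete isometry, which is exactly the claim.

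The one step that needs care---and the reason the argument is not purely formal---is the appeal to Lemma \ref{lem:F-exact}. As a left adjoint, the forgetful functor $\functor{F}_1$ automatically preserves cokernels, but it has no reason to preserve kernels on its own; this is why I bundle the complete isometry $u$ together with its cokernel $q$ and transport the \emph{pair}, rather than attempting to carry $u$ across as a bare kernel. Preservation of the full kernel--cokernel pair is precisely what Lemma \ref{lem:F-exact} supplies, and that lemma rests on the exactness of the Haagerup tensor product over a $C^*$-algebra (Theorem \ref{thm:exact}); it is this $C^*$-algebraic exactness of $B$, rather than any general categorical principle, that makes the conclusion go through.
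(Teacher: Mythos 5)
Your proof is correct and follows essentially the same route as the paper's own argument: form the kernel--cokernel pair $(u,q)$ via Lemma \ref{lem:ker-coker}, transport it through the restricted equivalence $\functor{L}_{C,1}$, apply Lemma \ref{lem:F-exact} to the forgetful functor, and conclude with Lemma \ref{lem:ker-coker} again. Your closing remark correctly identifies the genuine content---that kernel preservation by the forgetful functor rests on the $C^*$-exactness of the Haagerup tensor product (Theorem \ref{thm:exact}), not on formal category theory---which is exactly where the paper's Lemma \ref{lem:F-exact} does its work.
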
  
  
\begin{proof}
Let $i:X\to Y$ be a completely isometric map of operator $A$-modules. Letting $q:Y\to Y/i(X)$ be the quotient map, Lemma \ref{lem:ker-coker} ensures that $(i,q)$ is a kernel-cokernel pair in $\opmod(A)_1$. If the functor $\functor{L}_C$ is a completely isometric equivalence, then it  restricts to an equivalence $\functor{L}_{C,1}:\opmod(A)_1\to \opcomod(C)_1$, and so $(\functor{L}_{C,1}(i),\functor{L}_{C,1}(q))$ is a kernel-cokernel pair in $\opcomod(C)_1$. Now Lemma \ref{lem:F-exact} implies that the map $\functor{F}_1(\functor{L}_{C,1}(i)) = \functor{L}(i)$ is a kernel in $\opmod(B)_1$ and hence, by Lemma \ref{lem:ker-coker}, a complete isometry.
\end{proof}

\begin{remark}\label{rem:L-exact-cb}
Replacing $\functor{F}_1$ by $\functor{F}$ in the proof of Corollary \ref{cor:L-exact} gives the following `completely bounded' variant: if $\functor{L}_C$ is an equivalence then $\functor{L}$ preserves complete embeddings.
\end{remark}
 
Theorem \ref{thm:Cstar-max} follows easily from Corollary \ref{cor:L-exact} and from a result of Blecher:

\begin{proof}[Proof of Theorem \ref{thm:Cstar-max}]
  If $\functor{L}_C$ is  a completely isometric equivalence then Corollary \ref{cor:L-exact} implies that the dilation functor $\functor{L}:\opmod(A)\to \opmod(C^* A)$ preserves complete isometries. Blecher has shown that the latter property holds (if and) only if $A$ is a $C^*$-algebra: see \cite[Theorem 4.3]{Blecher-cstarmax}. Conversely, if $A$ is a $C^*$-algebra then $C^* A=A$, and Theorem \ref{thm:WEP} ensures that the comparison functor $\functor{L}_C:\opmod(A)\to \opcomod(A\otimes^{\h}_A A)$ is  an equivalence.
\end{proof}

\begin{remark}\label{rem:Cstar-max-cb}
We can establish the following partial analogue of Theorem \ref{thm:Cstar-max} in the completely bounded setting: if the comparison functor $\functor{L}_C: \opmod(A)\to \opcomod(C)$ is a completely bounded equivalence,   then the algebra $A$ has the  \emph{module complementation property}: if $A\to \Bounded(H)$ is a completely contractive homomorphism and $H'\subseteq H$ is a closed $A$-invariant subspace, then $H'$ is topologically complemented in $H$. (See \cite[Section 7.2]{BLM} for a discussion of this property.) The proof is similar to the proof of Theorem \ref{thm:Cstar-max}: if $\functor{L}_C$ is an equivalence then the dilation functor preserves complete embeddings (see Remark \ref{rem:L-exact-cb}), and then replacing complete isometries by complete embeddings in Blecher's proof of \cite[Theorem 4.2]{Blecher-cstarmax} shows that $A$ has the module complementation property. It is known that for certain classes of operator algebras, the module complementation property is equivalent to being a $C^*$-algebra: see \cite[7.2.7, 7.2.10]{BLM}  for example. On the other hand, \cite{Choi-Farah-Ozawa} provides an example of a non-self-adjoint operator algebra with the module complementation property.  
\end{remark}

\begin{remark}\label{rem:monadic}
The pair $(_A C^*  A_{C^*  A}, {}_{C^*  A} C^*  A_A)$, where $A$ is not a $C^*$-algebra, is an example of an adjoint pair of bimodules for which the left adjoint functor $\functor{L}$ is not \emph{comonadic} (that is to say, $\functor{L}_C$ is not a completely isometric equivalence), while the right adjoint functor $\functor{R}$ is \emph{monadic}: indeed, the monad associated to this adjunction is the functor on $\opmod(A)$ of tensor product with the $A$-bimodule $K=C^*  A\otimes^{\h}_{C^*  A} C^*  A\cong C^*  A$; the associated Eilenberg-Moore category is just the category $\opmod(C^*  A)$; and the comparison functor
\[
\functor{R}_K:\opmod(C^* A)\xrightarrow{Y\mapsto Y\otimes^{\h}_{C^*  A}C^*  A} \opmod(C^*  A)
\]
is obviously a completely isometric equivalence.
\end{remark}

\bibliographystyle{alpha}
\bibliography{descent}

\begin{thebibliography}{BKM18}

\bibitem[Abr99]{Abrams}
L.~Abrams.
\newblock Modules, comodules, and cotensor products over {F}robenius algebras.
\newblock {\em J. Algebra}, 219(1):201--213, 1999.

\bibitem[ADP02]{An-Pop}
C.~Anantharaman-Delaroche and C.~Pop.
\newblock Relative tensor products and infinite {$C^\ast$}-algebras.
\newblock {\em J. Operator Theory}, 47(2):389--412, 2002.

\bibitem[ASS93]{Allen-Sinclair-Smith}
S.~D. Allen, A.~M. Sinclair, and R.~R. Smith.
\newblock The ideal structure of the {H}aagerup tensor product of
  {$C^\ast$}-algebras.
\newblock {\em J. Reine Angew. Math.}, 442:111--148, 1993.

\bibitem[BKM18]{Blecher-Kaad-Mesland}
D.~P. Blecher, J.~Kaad, and B.~Mesland.
\newblock Operator {$\ast$}-correspondences in analysis and geometry.
\newblock {\em Proc. Lond. Math. Soc. (3)}, 117(2):303--344, 2018.

\bibitem[Ble97]{Blecher-newapproach}
D.~P. Blecher.
\newblock A new approach to {H}ilbert {$C^*$}-modules.
\newblock {\em Math. Ann.}, 307(2):253--290, 1997.

\bibitem[Ble99]{Blecher-cstarmax}
D.~P. Blecher.
\newblock Modules over operator algebras, and the maximal {$C^\ast$}-dilation.
\newblock {\em J. Funct. Anal.}, 169(1):251--288, 1999.

\bibitem[Ble01a]{Blecher-Morita}
D.~P. Blecher.
\newblock A {M}orita theorem for algebras of operators on {H}ilbert space.
\newblock {\em J. Pure Appl. Algebra}, 156(2-3):153--169, 2001.

\bibitem[Ble01b]{Blecher-cstarMorita}
D.~P. Blecher.
\newblock On {M}orita's fundamental theorem for {$C^\ast$}-algebras.
\newblock {\em Math. Scand.}, 88(1):137--153, 2001.

\bibitem[BLM04]{BLM}
D.~P. Blecher and C.~Le~Merdy.
\newblock {\em Operator algebras and their modules---an operator space
  approach}, volume~30 of {\em London Mathematical Society Monographs. New
  Series}.
\newblock The Clarendon Press, Oxford University Press, Oxford, 2004.
\newblock Oxford Science Publications.

\bibitem[BLR90]{Neron}
S.~Bosch, W.~L\"utkebohmert, and M.~Raynaud.
\newblock {\em N\'eron models}, volume~21 of {\em Ergebnisse der Mathematik und
  ihrer Grenzgebiete (3) [Results in Mathematics and Related Areas (3)]}.
\newblock Springer-Verlag, Berlin, 1990.

\bibitem[BO08]{Brown-Ozawa}
N.~P. Brown and N.~Ozawa.
\newblock {\em {$C^*$}-algebras and finite-dimensional approximations},
  volume~88 of {\em Graduate Studies in Mathematics}.
\newblock American Mathematical Society, Providence, RI, 2008.

\bibitem[Bor94]{Borceux}
F.~Borceux.
\newblock {\em Handbook of categorical algebra. 2}, volume~51 of {\em
  Encyclopedia of Mathematics and its Applications}.
\newblock Cambridge University Press, Cambridge, 1994.
\newblock Categories and structures.

\bibitem[BP88]{Borceux-Pelletier}
F.~Borceux and J.~W. Pelletier.
\newblock Descent theory for {B}anach modules.
\newblock In {\em Categorical algebra and its applications
  ({L}ouvain-{L}a-{N}euve, 1987)}, volume 1348 of {\em Lecture Notes in Math.},
  pages 36--54. Springer, Berlin, 1988.

\bibitem[Brz03]{Brz-grouplike}
T.~Brzezinski.
\newblock The structure of corings with a grouplike element.
\newblock In {\em Noncommutative geometry and quantum groups ({W}arsaw, 2001)},
  volume~61 of {\em Banach Center Publ.}, pages 21--35. Polish Acad. Sci. Inst.
  Math., Warsaw, 2003.

\bibitem[BS92]{Blecher-Smith}
D.~P. Blecher and R.~R. Smith.
\newblock The dual of the {H}aagerup tensor product.
\newblock {\em J. London Math. Soc. (2)}, 45(1):126--144, 1992.

\bibitem[BS04]{Blecher-Solel}
D.~P. Blecher and B.~Solel.
\newblock A double commutant theorem for operator algebras.
\newblock {\em J. Operator Theory}, 51(2):435--453, 2004.

\bibitem[BW03]{Brz-Wis}
T.~Brzezinski and R.~Wisbauer.
\newblock {\em Corings and comodules}, volume 309 of {\em London Mathematical
  Society Lecture Note Series}.
\newblock Cambridge University Press, Cambridge, 2003.

\bibitem[BW05]{ttt}
M.~Barr and C.~Wells.
\newblock Toposes, triples and theories.
\newblock {\em Repr. Theory Appl. Categ.}, (12):x+288, 2005.
\newblock Corrected reprint of the 1985 original [MR0771116].

\bibitem[CCH16]{CCH-Compositio}
P.~Clare, T.~Crisp, and N.~Higson.
\newblock Parabolic induction and restriction via {$C^*$}-algebras and
  {H}ilbert {$C^*$}-modules.
\newblock {\em Compos. Math.}, 152(6):1286--1318, 2016.

\bibitem[CCH18]{CCH-JIMJ}
P.~Clare, T.~Crisp, and N.~Higson.
\newblock Adjoint functors between categories of {H}ilbert {$C^\ast$}-modules.
\newblock {\em J. Inst. Math. Jussieu}, 17(2):453--488, 2018.

\bibitem[CES87]{CES}
E.~Christensen, E.~G. Effros, and A.~Sinclair.
\newblock Completely bounded multilinear maps and {$C^\ast$}-algebraic
  cohomology.
\newblock {\em Invent. Math.}, 90(2):279--296, 1987.

\bibitem[CFO14]{Choi-Farah-Ozawa}
Y.~Choi, I.~Farah, and N.~Ozawa.
\newblock A nonseparable amenable operator algebra which is not isomorphic to a
  {${\rm C}^*$}-algebra.
\newblock {\em Forum Math. Sigma}, 2:e2, 12, 2014.

\bibitem[CH16]{CH-Kadison}
T.~Crisp and N.~Higson.
\newblock Parabolic induction, categories of representations and operator
  spaces.
\newblock In {\em Operator algebras and their applications}, volume 671 of {\em
  Contemp. Math.}, pages 85--107. Amer. Math. Soc., Providence, RI, 2016.

\bibitem[Cip76]{Cipolla}
M.~Cipolla.
\newblock Discesa fedelmente piatta dei moduli.
\newblock {\em Rend. Circ. Mat. Palermo (2)}, 25(1-2):43--46, 1976.

\bibitem[Cla13]{Clare}
P.~Clare.
\newblock Hilbert modules associated to parabolically induced representations.
\newblock {\em J. Operator Theory}, 69(2):483--509, 2013.

\bibitem[Con94]{Connes-NCG}
A.~Connes.
\newblock {\em Noncommutative geometry}.
\newblock Academic Press, Inc., San Diego, CA, 1994.

\bibitem[CQ95]{Cuntz-Quillen-extensions}
J.~Cuntz and D.~Quillen.
\newblock Algebra extensions and nonsingularity.
\newblock {\em J. Amer. Math. Soc.}, 8(2):251--289, 1995.

\bibitem[Cri18]{Crisp-Frobenius}
T.~Crisp.
\newblock Frobenius reciprocity and the {H}aagerup tensor product.
\newblock {\em Trans. Amer. Math. Soc.}, 370(10):6955--6972, 2018.

\bibitem[Daw10]{Daws}
M.~Daws.
\newblock Multipliers, self-induced and dual {B}anach algebras.
\newblock {\em Dissertationes Math. (Rozprawy Mat.)}, 470:62, 2010.

\bibitem[ER03]{Effros-Ruan-Hopf}
E.~G. Effros and Z.-J. Ruan.
\newblock Operator space tensor products and {H}opf convolution algebras.
\newblock {\em J. Operator Theory}, 50(1):131--156, 2003.

\bibitem[Eym64]{Eymard}
P.~Eymard.
\newblock L'alg\`ebre de {F}ourier d'un groupe localement compact.
\newblock {\em Bull. Soc. Math. France}, 92:181--236, 1964.

\bibitem[GLR85]{GLR}
P.~Ghez, R.~Lima, and J.~E. Roberts.
\newblock {$W^*$}-categories.
\newblock {\em Pacific J. Math.}, 120(1):79--109, 1985.

\bibitem[Gro95]{Grothendieck}
A.~Grothendieck.
\newblock Technique de descente et th\'eor\`emes d'existence en g\'eom\'etrie
  alg\'ebrique. {I}. {G}\'en\'eralit\'es. {D}escente par morphismes
  fid\`element plats.
\newblock In {\em S\'eminaire {B}ourbaki, {V}ol.\ 5}, pages Exp.\ No.\ 190,
  299--327. Soc. Math. France, Paris, 1995.

\bibitem[HC53]{Harish-Chandra}
Harish-Chandra.
\newblock Representations of a semisimple {L}ie group on a {B}anach space. {I}.
\newblock {\em Trans. Amer. Math. Soc.}, 75:185--243, 1953.

\bibitem[JT04]{JT}
G.~Janelidze and W.~Tholen.
\newblock Facets of descent. {III}. {M}onadic descent for rings and algebras.
\newblock {\em Appl. Categ. Structures}, 12(5-6):461--477, 2004.

\bibitem[Kas84]{Kasparov_icm}
G.~G. Kasparov.
\newblock Operator {$K$}-theory and its applications: elliptic operators, group
  representations, higher signatures, {$C^\ast$}-extensions.
\newblock In {\em Proceedings of the {I}nternational {C}ongress of
  {M}athematicians, {V}ol.\ 1, 2 ({W}arsaw, 1983)}, pages 987--1000. PWN,
  Warsaw, 1984.

\bibitem[Kir93]{Kirchberg}
E.~Kirchberg.
\newblock On nonsemisplit extensions, tensor products and exactness of group
  {$C^*$}-algebras.
\newblock {\em Invent. Math.}, 112(3):449--489, 1993.

\bibitem[Lan95]{Lance}
E.~C. Lance.
\newblock {\em Hilbert {$C^*$}-modules}, volume 210 of {\em London Mathematical
  Society Lecture Note Series}.
\newblock Cambridge University Press, Cambridge, 1995.
\newblock A toolkit for operator algebraists.

\bibitem[Mes00]{Mesablishvili-pure}
B.~Mesablishvili.
\newblock Pure morphisms of commutative rings are effective descent morphisms
  for modules---a new proof.
\newblock {\em Theory Appl. Categ.}, 7:No. 3, 38--42, 2000.

\bibitem[Mes14]{Mesland}
B.~Mesland.
\newblock Unbounded bivariant {$K$}-theory and correspondences in
  noncommutative geometry.
\newblock {\em J. Reine Angew. Math.}, 691:101--172, 2014.

\bibitem[Mes18]{Mesablishvili-Banach}
B.~Mesablishvili.
\newblock Effective descent morphisms for {B}anach modules.
\newblock {\em J. Algebra Appl.}, 17(5):1850092, 6, 2018.

\bibitem[ML98]{MacLane}
S.~Mac~Lane.
\newblock {\em Categories for the working mathematician}, volume~5 of {\em
  Graduate Texts in Mathematics}.
\newblock Springer-Verlag, New York, second edition, 1998.

\bibitem[Nus97]{Nuss}
P.~Nuss.
\newblock Noncommutative descent and non-abelian cohomology.
\newblock {\em $K$-Theory}, 12(1):23--74, 1997.

\bibitem[Oza04]{Ozawa}
N.~Ozawa.
\newblock About the {QWEP} conjecture.
\newblock {\em Internat. J. Math.}, 15(5):501--530, 2004.

\bibitem[Pis96]{Pisier}
G.~Pisier.
\newblock A simple proof of a theorem of {K}irchberg and related results on
  {$C^*$}-norms.
\newblock {\em J. Operator Theory}, 35(2):317--335, 1996.

\bibitem[Rie74]{Rieffel_induced}
M.~A. Rieffel.
\newblock Induced representations of {$C^{\ast} $}-algebras.
\newblock {\em Advances in Math.}, 13:176--257, 1974.

\end{thebibliography}

 \end{document}